\tikzset{main node/.style={circle,fill=black!20,draw,minimum size=1cm,inner sep=0pt},
}
\tikzstyle{vertex}=[circle, draw, fill=black, inner sep=0pt, minimum size=4pt]
\tikzstyle{edge}=[line width=1.5pt,black!50!white]
\tikzstyle{gridp}=[inner sep=1pt,circle,fill=black!70!white]
\tikzstyle{gridl}=[black!50!white]
\tikzstyle{lnode}=[circle,white,draw=black!60!white,fill=black!60!white,inner sep=1pt]
\tikzstyle{cnode}=[circle,draw=black!60!white,fill=black!60!white,inner sep=1.5pt]
\tikzstyle{redge}=[edge,Red]
\tikzstyle{bedge}=[edge,NavyBlue]
\tikzstyle{nvertex}=[vertex, draw=ncol, fill=ncol]
\tikzstyle{edgeq}=[edge,gray!60,densely dashed]
\tikzstyle{nedge}=[edge,ncol]
\tikzstyle{oedge}=[edge,Red!60!black]
\theoremstyle{plain}
\newtheorem{theorem}{Theorem}[subsection]
\newtheorem{lemma}[theorem]{Lemma}
\newtheorem{conjecture}[theorem]{Conjecture}
\newtheorem{proposition}[theorem]{Proposition}
\newtheorem{graph lemma}[theorem]{Graph Lemma}
\newtheorem{corollary}[theorem]{Corollary}
\theoremstyle{definition}
\newtheorem{definition}[theorem]{Definition}
\newtheorem{example}[theorem]{Example}
\newtheorem{examples}[theorem]{Examples}
\newtheorem{remark}[theorem]{Remark}
\newtheorem{remarks}[theorem]{Remarks}
\newtheorem{sit}[theorem]{}
\newtheorem{nota}[theorem]{Notation}
\def\Aut{\operatorname{Aut}}
\def\Inn{\operatorname{Inn}}
\def\Br{\operatorname{Br}}
\def\Vert{\operatorname{Vert}}
\def\Reb{\operatorname{Edge}}
\def\Ratio{\operatorname{Ratio}}
\def\Tip{\operatorname{Tip}}
\def\Bir{\operatorname{Bir}}
\def\SAut{\operatorname{SAut}}
\def\Supp{\operatorname{Supp}}
\def\Pic{\operatorname{Pic}}
\def\supp{\operatorname{supp}}
\def\Spec{\operatorname{Spec}}
\def\Br{{\mathrm{Br}}}
\def\id{{\mathrm{id}}}
\def\PP{{\mathbb P}}
\def\ZZ{{\mathbb Z}}
\def\CC{{\mathbb C}}
\def\RR{{\mathbb R}}
\def\QQ{{\mathbb Q}}
\def\NN{{\mathbb N}}
\def\G{{\mathbb G}}
\def\Ga{\G_{\rm a}}
\def\Gm{\G_{\rm m}}
\def\A{{\mathbb A}}
\def\AA{{\mathbb A}}
\def\K{{\mathbb K}}
\def\LL{{\mathbb L}}
\def\C{{\mathcal C}}
\def\embed{\hookrightarrow}
\def\Aff{\operatorname{Aff}}
\def\cO{{\mathcal O}}
\def\0{\circ}
\def\isom{\xlongrightarrow{\simeq}} 
\newcommand{\slin}{\unitlength1mm\begin{picture}(0,0)
                       \put(0,-0.75){\line(0,-1){8.5}}
                      \end{picture}}
\newcommand{\nolin}{\unitlength1mm\begin{picture}(0,7)
                       \put(0.53,0.53){\line(1,1){8.94}}
                      \end{picture}}
\newcommand{\solin}{\unitlength1mm\begin{picture}(0,0)
                       \put(0.53,-0.53){\line(1,-1){8.94}}
                      \end{picture}}
\newcommand{\nwlin}{\unitlength1mm\begin{picture}(0,7)
                       \put(-0.53,0.53){\line(-1,1){8.94}}
                      \end{picture}}
\newcommand{\swlin}{\unitlength1mm\begin{picture}(0,0)
                       \put(-0.53,-0.53){\line(-1,-1){8.94}}
                      \end{picture}}
\newcommand{\vlin}[1]{\hspace{0.75mm}\unitlength1mm\begin{picture}(#1,0)
                       \put(0,0){\line(1,0){#1}}
                      \end{picture}\hspace{0.75mm}\rule[-3mm]{0mm}{4mm}}
\def\llin{\vlin{11.5}}
\newcommand{\lin}{\vlin{8.5}}
\newcommand{\co}[1]{\unitlength1mm\begin{picture}(0,8)
    \put(0,0){\circle{1.5}}
    \put(0,3){\makebox(0,5)[b]{$#1$}}
                      \end{picture}}
\newcommand{\mybox}{\unitlength1mm\begin{picture}(0,1.5)
    \put(-0.75,-0.75){\line(0,1){1.5}}
    \put(-0.75,-0.75){\line(1,0){1.5}}
    \put(0.75,0.75){\line(0,-1){1.5}}
    \put(0.75,0.75){\line(-1,0){1.5}}
    \end{picture}}
\newcommand{\boxo}[1]{\unitlength1mm\begin{picture}(0,8)
    \put(0,0){\mybox}
    \put(0,3){\makebox(0,5)[b]{$#1$}}
                      \end{picture}}
\newcommand{\cou}[2]{\unitlength1mm\begin{picture}(0,8)
    \put(0,0){\circle{1.5}}
    \put(0,3){\makebox(0,5)[b]{$#1$}}
    \put(0,-7){\makebox(0,4)[t]{$#2$}}
      \end{picture}
      \rule[-7mm]{0mm}{7mm}}
\newcommand{\cshiftup}[2]{\unitlength1mm\begin{picture}(0,9.25)
                       \put(0,10){\cou{#1}{#2}}
                      \end{picture}}
\newcommand{\cshiftdown}[2]{\unitlength1mm\begin{picture}(0,9.25)
                       \put(0,-10){\cou{#1}{#2}}
                      \end{picture}}
\begin{document}

\title[Automorphism groups of rigid surfaces]{Automorphism groups of rigid affine 
surfaces: the identity component}

\author[A. Perepechko]{Alexander Perepechko} 
\address{
HSE University, Faculty of Computer Science,\\
Pokrovsky blvd. 11, Moscow, 109028 Russia
} \email{a@perep.ru}

\author[M. Zaidenberg]{Mikhail Zaidenberg}
\address{Institut Fourier, UMR 5582, 
Laboratoire de Math\'ematiques,\newline\indent
Universit\'e Grenoble Alpes, CS 40700, 38058 
Grenoble cedex 9, France}
\email{mikhail.zaidenberg@gmail.com}

\thanks{
The research of A.~Perepechko was carried out 
at the HSE University at the expense 
of the Russian Science Foundation 
(project no. 22-41-02019).}
\thanks{\textit{Mathematics Subject Classification:}
14J50, 14R20 (primary), 14L30, 05C60 (secondary).
\mbox{\hspace{11pt}}
\thanks{\it Key words}:  affine
surface, automorphism group, algebraic group, 
ind-group, group action, 
birational transformation, transformation of a graph.}

\begin{abstract} It is known that the  
identity component
of the automorphism group of a projective 
algebraic variety
is an algebraic group. This is not true in general 
for quasi-projective varieties.
In this note
we address the question: given an affine 
algebraic surface $Y$, as to when
the  identity component $\Aut^\0 (Y)$
of the automorphism group $\Aut (Y)$  
is an algebraic group? We show that this 
occurs if and only if $Y$ admits 
no effective action of the additive group 
$\Ga$ of the field. In the latter case, 
$\Aut^\0 (Y)$ is an algebraic torus of rank $\le 2$.
\end{abstract}
\date{}

\maketitle

\tableofcontents

\section{Introduction}\label{sec:intro}
Let $\K$ be an algebraically closed field 
of characteristic zero
and $\Ga$ (resp. $\Gm$) be the additive 
(resp. the multiplicative) group of $\K$. 
If $Y$ is an affine variety over $\K$,
then the automorphism group $\Aut(Y)$ 
has a canonical structure of an ind-group 
equipped with its Zariski ind-topology, 
see e.g. \cite{FK}. In this paper we concentrate 
on the identity component $\Aut^{\0} (Y)$ of $\Aut(Y)$ 
with respect to the Zariski topology. 
Notice that, similarly to the case of a projective variety, 
the group of components $\Aut(Y)/\Aut^{\0} (Y)$ 
is countable, see \cite[Proposition 1.7.1]{FK}. 
We address the following conjecture.
We say that $Y$ is \textit{rigid} if  $Y$ admits 
no effective $\Ga$-action or, in other words, 
$\Aut^{\0} (Y)$ contains no $\Ga$-subgroup, 
i.e., no nontrivial unipotent element. 
\begin{conjecture}\label{conj}
Let $Y$ be an affine 
algebraic variety over $\K$.
If $Y$ is rigid, then 
the group $\Aut^{\0} (Y)$ 
is an algebraic torus of rank $\le\dim Y$. 
\end{conjecture}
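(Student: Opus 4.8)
I describe a strategy for the surface case, the setting of this paper. The statement reduces to showing that $\Aut^\0(Y)$ is an \emph{algebraic group}: if $G:=\Aut^\0(Y)$ is algebraic, then, being connected and acting faithfully and algebraically on the affine variety $Y$, it is linear; as $Y$ is rigid it has no $\Ga$-subgroup, so its unipotent radical is trivial and $G$ is reductive, and were the semisimple part of $G$ nontrivial it would contain $\SL_2$ or $\PGL_2$ and hence a $\Ga$; thus $G$ is an algebraic torus. Finally a faithful torus action has trivial generic stabiliser, so $\rk G$ equals the dimension of a general $G$-orbit and hence is $\le\dim Y=2$. This argument is dimension-independent, so the entire content of Conjecture~\ref{conj} is the algebraicity of $\Aut^\0(Y)$.

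Assume $\dim Y=2$. First pass to the normalisation: automorphisms lift, giving a closed embedding of ind-groups $\Aut(Y)\hookrightarrow\Aut(\widetilde Y)$ onto the subgroup preserving the conductor, so that $\Aut^\0(Y)$ is a connected ind-subgroup of $\Aut^\0(\widetilde Y)$; if $\widetilde Y$ is rigid this already gives the result for $Y$ (a connected ind-subgroup of a torus is a subtorus), and if not, one uses in addition that $\Aut^\0(Y)$ stabilises the proper closed invariant non-normal locus. So let $Y$ be normal and fix a minimal completion $(X,D)$ with $X$ smooth projective and $D=X\setminus Y$ a curve with simple normal crossings. Each automorphism of $Y$ extends to a birational self-map of $X$, giving a homomorphism $\Aut(Y)\to\Bir(X)$. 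The plan is to show that the image of $\Aut^\0(Y)$ has bounded degree, whence, by regularisation (Weil) together with the theory of bounded (equivalently, algebraic) subgroups of $\Bir(X)$, it lies in a connected algebraic subgroup of $\Bir(X)$ and acts biregularly on some completion $(X',D')$ of $Y$, embedding as a closed ind-subgroup of $\Aut(X',D')$. Since $D'$ supports an ample divisor (Goodman) and $\Aut(X',D')$ permutes its finitely many components, a finite-index subgroup fixes a polarisation; hence $\Aut(X',D')$, and with it $\Aut^\0(Y)$, is an algebraic group, and the first paragraph finishes the proof.

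Rigidity enters precisely in the boundedness step: an unbounded sequence in $\Aut^\0(Y)$ would force infinitely many distinct inner blow-ups and blow-downs at the boundary, and the combinatorics of weighted dual graphs of boundaries of affine surfaces should show that this is possible only when the boundary can be transformed into a zigzag, i.e.\ when $Y$ is a Gizatullin surface; but every Gizatullin surface admits an effective $\Ga$-action (Gizatullin), contradicting rigidity. An equivalent, more concrete route bypasses $\Bir(X)$: if $\Aut^\0(Y)\ne\{1\}$, show it contains a nontrivial torus $T$, so that $Y$ is a $T$-surface with $\rk T\in\{1,2\}$; then the classification of affine surfaces carrying a $\Gm$- or $\Gm^2$-action, and the description of their automorphism groups (Flenner--Zaidenberg for $\Gm$-surfaces, Danilov--Gizatullin, and the toric case for rank $2$), yield $\Aut^\0(Y)=T$ in the rigid case.

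The main obstacle, on either route, is a ``Graph Lemma'': the weighted dual graph of the boundary of a minimal completion of a rigid — more generally, non-Gizatullin — normal affine surface is rigid under the elementary transformations relating different completions; equivalently, a nontrivial reversible such transformation produces an $\A^1$-fibration of $Y$ over $\A^1$, and hence a $\Ga$-action. Converting the hypothesis ``no $\Ga$-action'' into this statement about graphs, together with the subsidiary point that a nontrivial connected ind-subgroup of $\Aut(Y)$ without $\Ga$-subgroups does contain a torus, is where the real work lies; the reductions above and the passage from ``algebraic, no $\Ga$'' to ``torus'' are routine by comparison.
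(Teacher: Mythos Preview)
Your first paragraph and the overall architecture—reduce to algebraicity of $\Aut^\0(Y)$, pass to a completion $(X,D)$, and control the birational self-maps via the combinatorics of the boundary—match the paper's, and the passage from ``connected algebraic with no $\Ga$'' to ``torus of rank $\le 2$'' is indeed routine.

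There is, however, a genuine gap in paragraphs three and four: you conflate \emph{rigid} with \emph{non-Gizatullin}. A Gizatullin surface is one whose boundary can be contracted to a zigzag, equivalently one on which $\Aut$ acts with an open orbit; these carry two independent $\Ga$-actions. But many non-Gizatullin surfaces still carry a single $\Ga$-action—take any surface with an $\A^1$-fibration over an affine curve of positive genus, or over $\A^1$ with a branched boundary tree—and for these $\Aut^\0(Y)$ already contains the infinite-dimensional abelian unipotent group $\exp((\ker\partial)\cdot\partial)$. So ``unbounded $\Rightarrow$ zigzag'' fails, and ``the dual graph of a non-Gizatullin surface is birationally rigid'' is false. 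The correct combinatorial criterion, and the one the paper proves, is that $\Gamma(D)$ is birationally rigid if and only if it has \emph{no non-admissible extremal linear segment} (no extremal linear piece containing a vertex of weight $\ge -1$); this is then shown to be equivalent to the absence of an $\A^1$-fibration over an affine curve, hence to rigidity of $Y$.

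With this correction the paper's proof runs parallel to your sketch but is more direct: the Graph Lemma yields that, under admissibility of all extremal segments, every element of $\Bir(X,D)$ is \emph{inner} (a composition of blowups and blowdowns at nodes of $D$ only). A short bounded-degree argument—in the spirit of your ``boundedness'' step, but without invoking Weil regularisation—then shows that any irreducible algebraic subset of $\Inn(X,D)$ lies in a single coset of the algebraic group $\Aut^\natural(X,D)$, whence $\Aut^\0(Y)=\Inn^\0(X,D)=\Aut^\0(X,D)$. Your alternative route via the classification of $\Gm$-surfaces is not what the paper does; it is plausible, but note that it too rests on a nontrivial input you do not mention, namely that in the absence of $\Ga$-actions any two $\Gm$-actions on $Y$ commute.
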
 
If Conjecture \ref{conj} is true, 
then the following one is true as well 
\footnote{We thank Hanspeter Kraft 
who suggested the second conjecture.}.
\begin{conjecture}\label{conj1} If an affine variety 
$Y$ admits no effective $\Ga$- and $\Gm$-actions, 
then $\Aut(Y)$ is a discrete group. 
\end{conjecture}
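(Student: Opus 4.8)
The plan is to deduce Conjecture \ref{conj1} from Conjecture \ref{conj}, exactly as the preceding sentence promises; once the latter is granted, the argument is essentially formal. So assume Conjecture \ref{conj}, and let $Y$ be an affine variety that admits neither an effective $\Ga$-action nor an effective $\Gm$-action. Since $Y$ carries no effective $\Ga$-action, it is rigid in the sense of the paper, so Conjecture \ref{conj} applies and tells us that $\Aut^{\0}(Y)$ is an algebraic torus, say $\Aut^{\0}(Y)\cong\Gm^{r}$ with $0\le r\le\dim Y$.

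The next step is to rule out $r\ge 1$. The identity component $\Aut^{\0}(Y)$ is, by construction, a subgroup of $\Aut(Y)$, hence it acts faithfully on $Y$: only the identity automorphism fixes every point. If $r\ge 1$, then $\Gm^{r}$ contains a one-dimensional subtorus isomorphic to $\Gm$, and restricting the faithful $\Aut^{\0}(Y)$-action (which is algebraic on the finite-dimensional algebraic subgroup $\Aut^{\0}(Y)$) to this subtorus yields an effective algebraic $\Gm$-action on $Y$, contradicting the hypothesis. Hence $r=0$, that is, $\Aut^{\0}(Y)=\{\id\}$ is trivial.

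To finish, recall from \cite[Proposition 1.7.1]{FK} that the component group $\Aut(Y)/\Aut^{\0}(Y)$ is at most countable. Since every connected component of the ind-group $\Aut(Y)$ is a translate of $\Aut^{\0}(Y)$, and the latter is now a single reduced point, $\Aut(Y)$ is an at most countable set of reduced points, i.e. a discrete ind-group; this is Conjecture \ref{conj1}. In particular, combined with the main result of this paper, which settles Conjecture \ref{conj} for surfaces, this shows that Conjecture \ref{conj1} holds unconditionally when $\dim Y=2$.

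As for the main obstacle: essentially all the mathematical content sits in Conjecture \ref{conj}, which we are assuming here, so at this stage there is nothing genuinely hard — the deduction uses only that faithfulness of a group action is inherited by subgroups and that a rank-zero torus is trivial. The single point that deserves care is the interpretation of ``discrete'': it should be read as a statement about the ind-group structure, namely that every component of $\Aut(Y)$ is a reduced point, which is precisely what triviality of $\Aut^{\0}(Y)$ together with countability of the component group delivers.
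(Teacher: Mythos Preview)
Your proposal is correct and is exactly the argument the paper has in mind. Note that the paper does not actually give a proof of this statement: Conjecture~\ref{conj1} is stated as a conjecture, and the paper only asserts (without proof) in the sentence preceding it that ``If Conjecture~\ref{conj} is true, then the following one is true as well.'' Your write-up simply makes this implication explicit, and does so in the only natural way---deduce that $\Aut^{\0}(Y)$ is a torus from Conjecture~\ref{conj}, then use the absence of effective $\Gm$-actions to force rank zero, and conclude discreteness from triviality of $\Aut^{\0}(Y)$ together with countability of the component group.
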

Some partial results on these conjectures can be found 
e.g. in \cite[Theorem 2.1]{AG}, \cite[Proposition 5]{Ii}, 
\cite{Je}, \cite{JL}, \cite[Theorems 4.4 and 4.7]{KPZ}, 
\cite[Theorem 1.3 and Section 7]{Kr}, and \cite{PR1}, \cite{PR2}. 
Conjecture \ref{conj} holds, for instance, 
for toric affine varieties and rational affine varieties 
with a torus action of complexity 1,
see \cite[Theorem 3]{Boldyrev-Gaifullin} 
and \cite[Theorem 6.4]{Borovik-Gaifullin}.
In the present paper
we establish Conjecture \ref{conj}  
in the case $\dim Y=2$.
\begin{theorem}\label{intro-fin-dim} Let $Y$ be 
a normal affine surface over $\K$. 
Then the following hold. 
\begin{enumerate}
\item[(a)]
$\Aut^{\0} (Y)$ is an algebraic group if and only 
if $Y$ admits no effective $\Ga$-action,  if and only 
if $\Aut^{\0} (Y)$ is an algebraic torus of rank $\le 2$.
\item[(b)] 
Let $(X,D)$ be a minimal completion of $Y$ 
by a normal crossing divisor. Then
$\Aut^{\0} (Y)$ is an algebraic group if and only if
every extremal linear segment of the weighted dual graph
$\Gamma(D)$ is admissible.
\end{enumerate}
\end{theorem}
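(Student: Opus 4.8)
The plan is to prove the combinatorial criterion (b) and to deduce (a) from it, using in addition a ``replica'' construction for $\Ga$-actions and an elementary fact about connected algebraic groups acting on surfaces. Throughout, $(X,D)$ is a minimal completion of $Y$ by a normal crossing divisor and $\Gamma(D)$ its weighted dual graph.

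\textbf{$\Ga$-actions obstruct algebraicity.} First I would show that an effective $\Ga$-action on $Y$ prevents $\Aut^{\0}(Y)$ from being algebraic. If $\partial\in\LND(\O(Y))$ generates such an action and $B=\ker\partial$, then, $Y$ being a normal affine surface, $\operatorname{trdeg}_{\K}B=1$, so $\dim_{\K}B=\infty$. Since $g,h\in B$ give $[g\partial,h\partial]=0$, the assignment $g\mapsto\exp(g\partial)$ embeds the infinite-dimensional vector group $(B,+)$ into $\Aut^{\0}(Y)$; restricting it to finite-dimensional subspaces of $B$ exhibits algebraic subgroups of $\Aut^{\0}(Y)$ of unbounded dimension, so $\Aut^{\0}(Y)$ is not an algebraic group. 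In particular, $\Aut^{\0}(Y)$ algebraic implies $Y$ rigid.

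\textbf{The combinatorial equivalence (the hard part).} The heart of the proof is the equivalence: $Y$ admits an effective $\Ga$-action if and only if $\Gamma(D)$ is \emph{not} birationally rigid. I would approach it through the structure theory of completions of affine surfaces together with the calculus of weighted graphs: every automorphism of $Y$ extends to a birational self-map of $X$, hence induces a birational transformation of the pair $(X,D)$, that is, of $\Gamma(D)$; and an effective $\Ga$-action corresponds, after admissible blow-ups and blow-downs at boundary points, to an $\A^1$-fibration of $Y$ arising from a $\PP^1$-fibration on a completion whose boundary is a linear chain. Thus, if $\Gamma(D)$ is not birationally rigid, one transforms the graph to such a ``zigzag'', reads off the $\A^1$-fibration of $Y$ and the associated nonzero locally nilpotent derivation; conversely, an effective $\Ga$-action yields such a fibration on any completion and hence a birational transformation of $\Gamma(D)$ to a non-rigid graph. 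I expect this step to be the main obstacle: it rests on a careful analysis of which graph modifications can be realized without changing $Y$ and of which weighted graphs support fibrations, i.e. on a Gizatullin-type classification carried out at the level of graphs.

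\textbf{Regularization and the torus structure.} Assume finally that $\Gamma(D)$ is birationally rigid. The combinatorial analysis shows that $Y$ has, up to isomorphism, only finitely many minimal completions; since $\Aut(Y)$ permutes them and $\Aut^{\0}(Y)$ is connected, $\Aut^{\0}(Y)$ fixes the class of $(X,D)$, i.e. extends to a group of biregular automorphisms of the projective surface $X$ preserving $D$. Hence $\Aut^{\0}(Y)$ is isomorphic to the identity component of the closed algebraic subgroup $\{g\in\Aut^{\0}(X):g(D)=D\}$ of $\Aut^{\0}(X)$, and in particular is an algebraic group. If moreover $Y$ is rigid, this connected algebraic group contains no nontrivial unipotent element, hence is reductive with trivial semisimple part, i.e. a torus $T$; as $T$ acts faithfully on the irreducible surface $Y$, no positive-dimensional subtorus fixes $Y$ pointwise, forcing $\rk T\le\dim Y=2$. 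Assembling the pieces: (b) follows from the two implications established, and (a) follows since ``$Y$ rigid $\Rightarrow$ $\Gamma(D)$ birationally rigid $\Rightarrow$ $\Aut^{\0}(Y)$ algebraic $\Rightarrow$ $\Aut^{\0}(Y)$ a torus of rank $\le 2$'', while the remaining implications ``torus $\Rightarrow$ algebraic'' and ``algebraic $\Rightarrow$ rigid'' are, respectively, trivial and the replica argument.
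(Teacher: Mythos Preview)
Your overall architecture coincides with the paper's: the replica argument for $\Ga$ (your Step~1) is exactly Lemma~\ref{lem: Ga-actions}; the torus conclusion is the same structure-theory argument the paper uses at the end of Theorem~\ref{thm:Ga-inn}; and your Step~2 is the content of Proposition~\ref{affine-fibration} together with Proposition~\ref{abs-graph}. One inaccuracy there: you say that after suitable modifications the boundary becomes ``a linear chain'' (a zigzag). That is the Gizatullin picture and is not true in general; what non-rigidity of $\Gamma(D)$ actually buys is a birationally equivalent minimal graph with a rational $(0)$-vertex of degree~$1$ at the tip of a non-admissible extremal segment, and the $\A^1$-fibration comes from the pencil $|C|$ through that $(0)$-curve (see the proof of Proposition~\ref{affine-fibration}). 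The rest of the boundary may well branch.

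The genuine gap is your regularization Step~3. Birational rigidity is a statement about the weighted \emph{graph}; it does not by itself tell you that $Y$ has only finitely many minimal NC-completions as \emph{pairs}, and ``$\Aut^{\0}(Y)$ fixes the isomorphism class of $(X,D)$'' does not imply that each $\Phi\in\Aut^{\0}(Y)$ extends biregularly to the fixed pair $(X,D)$ --- you still need that the stabilizer $\Aut(X,D)$ is a closed ind-subgroup and that the orbit map to the finite set is ind-continuous, neither of which you address. The paper closes this gap differently and more carefully: it lifts each $\Phi\in\Bir(X,D)$ to a relatively minimal diagram~\eqref{diagr:bir-pairs}, uses the Graph Lemma~\ref{graph-lemma} and Proposition~\ref{cor:graph} to force the induced graph morphisms to be inner (so $\Bir(X,D)=\Inn(X,D)$, Theorem~\ref{th:bir=inn}), and then proves $\Inn^{\0}(X,D)=\Aut^{\0}(X,D)$ by a degree-bound and coset-finiteness argument (Proposition~\ref{prop:ind-sbgrp}). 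In fact, under the full birational-rigidity hypothesis the relatively minimal diagram forces both $p_1,p_2$ to be graph isomorphisms, hence $P_1,P_2$ are isomorphisms of pairs and $\Bir(X,D)=\Aut(X,D)$ outright; once you see this, your ``finitely many completions'' detour becomes unnecessary, but establishing it \emph{is} the missing work in your proposal.
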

See Definition \ref{def:dual} 
for the notion of a weighted dual graph.  
An \textit{extremal linear segment} 
of a weighted graph 
$\Gamma$ is a maximal linear subgraph 
that carries no branch point of $\Gamma$ and
contains a tip of $\Gamma$, see Definition 
\ref{def:segments}. 
It is \textit{admissible} if all its weights are $\le -2$, 
see Definition \ref{def:admissible}. 

As a corollary, we establish the following combinatorial 
criterion of rigidity, cf. Corollary \ref{cor:bir=inn}. 
\begin{corollary}\label{cor}
In the setup of Theorem \ref{intro-fin-dim} (b)
the surface $Y$ 
is rigid if and only if 
every extremal linear segment of 
$\Gamma(D)$ is admissible.
\end{corollary}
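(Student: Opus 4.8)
The plan is to reduce Corollary \ref{cor} to a statement about weighted graphs and to settle that statement. By Theorem \ref{intro-fin-dim}(a), $Y$ is rigid if and only if $\Aut^{\0}(Y)$ is an algebraic group, and by part (b) this holds if and only if the dual graph $\Gamma(D)$ of the given minimal completion is birationally rigid in the sense of Definition \ref{def:bir-rigid}. Thus it suffices to establish the following purely combinatorial equivalence: a weighted graph occurring as $\Gamma(D)$ for a minimal completion $(X,D)$ as in Theorem \ref{intro-fin-dim} is birationally rigid if and only if it has no non-admissible extremal linear segment. From now on I argue with weighted graphs and the moves of Definition \ref{def:bir-rigid} --- blow-ups at points of $D$ and blow-downs of $(-1)$-curves inside $D$, i.e. the corresponding elementary transformations of $\Gamma(D)$ --- all of which fix $Y=X\setminus D$. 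Recall that a failure of birational rigidity means precisely that some such sequence produces a graph carrying a \emph{fibre-like} vertex: a vertex of non-negative weight and valency $\le 2$ whose linear system rules the ambient surface and thereby induces an $\A^1$-fibration on $Y$, equivalently a nontrivial $\Ga$-action.

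\emph{A non-admissible extremal linear segment forces non-rigidity.} Suppose $\Gamma(D)$ has an extremal linear segment $L$ containing a vertex of weight $\ge -1$; pick such a vertex $v$ closest to the tip $t$ of $L$. Minimality of $(X,D)$ forbids inner $(-1)$-vertices of valency $\le 2$, so the subchain of $L$ strictly between $v$ and $t$ consists of weights $\le -2$ and is therefore admissible. If $v=t$ has weight $\ge 0$ we already see a fibre-like vertex. Otherwise I apply the standard reduction of non-admissible chains by elementary transformations --- migrating weight along the chain through the vertex $v$ and contracting the inner $(-1)$-vertices that appear, together with outer moves at $t$ --- which only modifies the weight of the branch vertex to which $L$ is attached and replaces $L$ by a chain $[0]$ or $[0,\dots,0]$; the resulting tip has weight $0$ and valency $1$. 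Either way $\Gamma(D)$ is not birationally rigid, hence $Y$ is not rigid. (When the only available $v$ has weight exactly $-1$ and is a branch vertex one first reverses an adjacent admissible subchain past $v$, a standard normalization, to reduce to the case just treated.)

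\emph{If every extremal linear segment is admissible then the graph is birationally rigid.} This is the main obstacle. Assume all extremal linear segments of $\Gamma(D)$ carry only weights $\le -2$. The core point is that elementary transformations localized inside an admissible linear chain keep it admissible --- a blow-up at an inner edge followed by the matching blow-down returns an admissible chain, and no blow-down is available at a tip of such a chain --- and, relatedly, that they preserve the negative-definiteness of the peripheral (tail) part of the intersection form of $D$, whereas a fibre-like vertex, having weight $\ge 0$, can never belong to a negative-definite configuration. One then argues that an elementary transformation is either localized inside a peripheral admissible chain, hence harmless, or acts at or across a branch vertex, in which case the only route to making an extremal vertex non-negative is to push a non-negative weight into a peripheral chain, contradicting admissibility at that step. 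Since a fibre-like vertex is by definition extremal (valency $\le 2$) and non-negative, it can never arise; thus $\Gamma(D)$ is birationally rigid and $Y$ is rigid. In the special case where $D$ has no branch vertex and $\Gamma(D)$ is itself a single admissible chain, the whole intersection form of $D$ is negative definite, which is visibly preserved by all elementary transformations, and the argument collapses to a single line.

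The genuinely hard step is the bookkeeping in the previous paragraph: one must control \emph{all} elementary transformations simultaneously --- not only the obvious ones --- and rule out roundabout sequences that temporarily leave the peripheral chains and then return. I would handle this by invoking the normal-form and standardization results for weighted graphs developed earlier in the paper to replace an arbitrary transformation sequence by a canonical one, and then tracking the peripheral-admissibility (negative-definiteness) invariant along that canonical sequence; the branch vertices and interior chains, where the graph may well carry non-negative weights, are precisely the places where this bookkeeping has to be done with care.
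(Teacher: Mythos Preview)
Your reduction via Theorem \ref{intro-fin-dim}(b) lands you on the equivalence ``$\Gamma(D)$ is birationally rigid $\Leftrightarrow$ $\Gamma(D)$ has no non-admissible extremal linear segment'', which you then try to prove as a purely combinatorial fact. This is where the approach breaks down. In the paper's sense (Definition \ref{def:bir-rigid}), birational rigidity means that in every relatively minimal diagram no vertex of $\Gamma$ is contracted; by Proposition \ref{abs-graph} this is equivalent to \emph{all} segments being admissible, not just the extremal linear ones. A minimal graph with admissible extremal tails but a non-admissible inner linear segment, or a non-admissible circular graph (which has no extremal linear segments at all, e.g.\ $((0,0))$), fails birational rigidity while vacuously satisfying your extremal condition. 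So the combinatorial equivalence you set out to prove is false for general minimal weighted graphs; to the extent it holds for dual graphs of affine surfaces, it requires geometric input, which defeats the purpose of the reduction.

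What you have tacitly replaced ``birationally rigid'' by is the condition ``no birational transformation of $\Gamma(D)$ produces a $(0)$-vertex of degree $1$''. That \emph{is} the right condition for the corollary, but it is not Definition \ref{def:bir-rigid}, and proving its equivalence with rigidity of $Y$ is exactly the content of Proposition \ref{affine-fibration}: a $(0)$-end vertex corresponds to a reduced fibre of a $\PP^1$-fibration with a section in $D$, hence to an $\AA^1$-fibration on $Y$ and a $\Ga$-action, and conversely. The paper therefore proves Corollary \ref{cor} in one line as the equivalence (i)$\Leftrightarrow$(iii) of Proposition \ref{affine-fibration} (equivalently (a)$\Leftrightarrow$(b) of Theorem \ref{thm:Ga-inn}), with no detour through birational rigidity.

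Two smaller points. In your forward direction, minimality already forbids any $(-1)$-vertex of degree $\le 2$, so the vertex $v$ you pick on a non-admissible extremal segment has weight $\ge 0$ from the start and the ``otherwise'' branch never occurs; the reduction to a $(0)$-tip is then the standard sequence of elementary transformations along the segment, as in the proof of Proposition \ref{affine-fibration}. In your backward direction the ``bookkeeping'' you defer is precisely the Graph Lemma \ref{graph-lemma} and Proposition \ref{cor:graph}: in a relatively minimal diagram the branch set is stable, admissible segments map isomorphically to admissible segments, and non-admissibility of an extremal segment is a birational invariant. That is the machinery one must invoke, not a hand-rolled negative-definiteness argument (which, incidentally, cannot by itself distinguish extremal from inner non-admissible segments).
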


Using Corollary \ref{cor} we show 
that the affine surface 
$Y=\PP^2\setminus \supp D$, where $D$ is 
a reduced effective divisor 
on $\PP^2$ with only nodes as singularities, 
is rigid if and only if $\deg(D)\ge 3$, 
see Example \ref{exa:plane-curve}.

Statement (a) of 
Theorem \ref{intro-fin-dim} 
was announced in \cite[Proposition 4.7]{KPZ}. 
Our approach goes back to the work of V.~I.~Danilov 
and M.~H.~Gizatullin \cite{DG}. 
Namely, the automorphism group $\Aut (Y)$ 
can be realized as a group of 
birational transformations 
between the NC-completions of $Y$, i.e., 
the completions $(X,D)$ 
with a normal projective surface $X$ and 
a normal crossing boundary divisor $D$ 
contained in the smooth locus of $X$. 

The opposite of rigidity is the  property of
generic flexibility.
One says that $Y$ is \textit{generically flexible} 
if the subgroup 
$\SAut(Y)\subset\Aut(Y)$ generated by all the 
$\Ga$-subgroups
of $\Aut(Y)$ acts on $Y$ with an open orbit. 
By a celebrated Gizatullin theorem, see  \cite{Giz71b}, 
extended by Adrien Dubouloz
to normal affine surfaces, see \cite{Dub04}, 
a normal affine surface $Y$ 
non-isomorphic to 
$\A^1\times (\A^1\setminus \{0\})$
is generically flexible
if and only if the dual graph $\Gamma(D)$ of 
a minimal completion $(X,D)$ of $Y$ 
by a normal crossing divisor $D$ is linear.

In the proofs of our main results, 
our tools lie in
the birational geometry
of weighted graphs. 
Different aspects of this subject 
were developed e.g. in 
\cite{Dai1}, \cite{Dai2}, \cite{DG}, \cite{EN}, \cite{FKZ-graphs}, \cite{FZ},  
\cite{Fu}, \cite{Gi2}, \cite{Hi}, \cite{Ne}, \cite{OZ}, \cite{Ra1}, \cite{Ru}, etc.  
We consider, more generally, weighted graphs 
with a distinguished subset of marked vertices 
called \textit{rational}. 
In the case where our graph $\Gamma$ is 
the dual graph of a finite collection of 
curves on a surface, 
the rational vertices of $\Gamma$ correspond 
to rational curves. 

Section \ref{sec:NC-pairs} contains preliminaries 
on NC-completions and their birational transformations.
Section \ref{prelim-graphs}  deals with dual graphs and 
birational transformations of weighted graphs. 
For the sake of completeness we provide 
detailed proofs of some known results. 

In Section \ref{sec:graph-lemma}.2 we discuss 
birationally rigid weighted graphs. 
A weighted graph $\Gamma$ is \textit{minimal} 
if it has no rational vertex of degree $\le 2$ 
and of weight $-1$, and \textit{birationally rigid} 
if it is minimal and any birational transformation 
of weighted graphs
$\Gamma\dasharrow\Gamma'$  is an isomorphism, 
provided that $\Gamma'$ also
is minimal,
see Definition \ref{def:bir-rigid}.

In Section \ref{sec:actions} we show that 
the rigidity of an affine surface can be read 
from the weighted dual graph of a minimal 
NC-completion of this surface. 

The proof of Theorem \ref{intro-fin-dim} is done 
in Section \ref{sec:main}, see Theorem 
\ref{thm:Ga-inn}. In the final Secton \ref{app} 
we classify all connected weighted graphs 
with an infinite number of minimal models, 
see  Theorem \ref{prop:classification}.

Let us indicate the main ingredients 
in the proof of Theorem \ref{intro-fin-dim}.~
\begin{itemize} 
\item
It is well known that any birational transformation 
between smooth surfaces can be decomposed 
into a sequence of blowups and blowdowns. 
The theory of birational transformations of weighted 
graphs is parallel to that of birational transformations 
of NC-pairs, see Definition \ref{def:NC-pairs}.
\item
Namely,  to every birational transformation of an 
NC-pair $(X,D)$ there corresponds a
birational transformation of the dual graph 
$\Gamma(D)$ and vice versa, see 
Proposition \ref{def:repr} and Lemma 
\ref{lem:rel-min-pairs}.
\item 
The group $\Aut(Y)$ of automorphisms of a 
normal affine surface $Y$ contains a $\Ga$-subgroup 
if and only if $Y$ admits a minimal NC-completion 
$(X,D)$ whose dual graph $\Gamma(D)$ 
has a tip of weight zero, see Proposition 
\ref{affine-fibration} 
and its proof. 
\item
An extremal linear segment $S$
of a minimal weighted graph $\Gamma$ is
non-admissible 
if and only if $S$ is birationally equivalent 
to a linear graph 
with a tip of weight zero, see e.g. 
\cite[Examples 2.11]{FKZ-graphs}.  
\item 
These results imply that 
$Y=X\setminus D$ is rigid if and only if 
the dual graph $\Gamma(D)$ contains no non-admissible 
extremal linear segment, see Corollary \ref{cor}.
\item A blowup of an NC-pair $(X,D)$ is called 
\textit{inner} if its center is a node of $D$.  
A blowdown is \textit{inner} if 
it is the inverse of an inner blowup.
A birational transformation is \textit{inner} 
if it is composed of inner blowups, 
inner blowdowns and isomorphisms, see 
Definitions \ref{def:inner}--\ref{def:inner-1}. 
\item Let $\Bir(X,D)$ stand for the subgroup 
of $\Bir(X)$ of birational transformations biregular 
on $Y=X\setminus D$. 
Then the inner birational transformations 
of $(X,D)$ form a subgroup of $\Bir(X,D)$ 
denoted $\Inn(X,D)$.
\item
Similar definitions can be applied for 
birational transformations of weighted graphs, 
see Definitions \ref{def:blowup} and \ref{def:inner1}. 
To every inner birational transformation of 
an NC-pair $(X,D)$ there corresponds 
a unique inner
birational transformation of the dual graph 
$\Gamma(D)$ and vice versa, see 
Proposition \ref{def:repr}.  
\item If a minimal weighted graph $\Gamma$ 
has only admissible
extremal linear segments,
then any birational transformation 
of $\Gamma$ can be replaced by 
an inner birational transformation, 
see Proposition \ref{cor:graph}.3. 
\item
Let  $(X,D)$ be a minimal  NC-completion 
of a rigid affine surface $Y$. 
Then the identity component 
$\Aut^\0 (Y)=:\Bir^\0(X,D)$ 
of $\Aut (Y)$ coincides with 
the identity component
$\Inn^\0(X,D)$ of the group of
inner  birational transformations of the pair, 
see  Definitions 
\ref{def:inner}, \ref{def:inner-1} 
and Theorem \ref{th:bir=inn}. 
\item
In turn, the identity component $\Inn^\0 (X,D)$ 
coincides with
the affine algebraic 
subgroup $\Aut^{\0} (X,D)$, see Proposition 
\ref{prop:ind-sbgrp}. 
\item
The absence of $\Ga$-subgroups forces 
the connected affine algebraic group
$\Aut^{\0} (Y)=\Aut^{\0} (X,D)$ 
to be an algebraic torus. 
\end{itemize}

\smallskip

This proves Theorem \ref{intro-fin-dim}.
As a side result we  obtain
that for a rigid normal affine surface $Y$ 
the group $\Aut^{\0} (X,D)$ does not depend on the choice 
of a minimal NC-completion $(X,D)$ of $Y$. 

\medskip

 {\bf Acknowledgments.} We are grateful to Ivan Arzhantsev, 
 Hanspeter Kraft and Alvaro Liendo for some remarks 
 and indications in the literature. 
 Our thanks also go to an anonymous reviewer 
 for his or her insightful remarks.
 
\medskip


\section{NC-pairs}\label{sec:NC-pairs}

\subsection{Birational transformations of NC-pairs}
\label{sec:NC-pair}
\begin{definition}\label{def:NC-pairs}
Let $X$ be a  normal projective surface  over $\K$ 
and $D$ be a  reduced effective divisor on $X$. 
The pair $(X,D)$ is called an {\em $NC$-pair} if $D$ 
is a normal crossing divisor (i.e., the only singularities of 
$D$ are nodes) contained in the smooth locus of $X$.
An NC-pair $(X,D)$ is called an {\em SNC-pair} if $D$ 
is a simple normal crossing divisor, i.e., each component 
of $D$ is smooth and any two components of $D$ 
either are disjoint or intersect at a single point. 
An NC-pair $(X,D)$ is called \textit{minimal} 
if the contraction of a $(-1)$-curve on $X$ that is 
a component of $D$ does not lead to a new NC-pair.
\end{definition}
 Clearly, an NC-pair $(X,D)$ is not an SNC-pair 
 if and only if $D$ has either  a nodal component
 or a pair of components with more than one point 
 in common. As an example, one can consider the pair 
 $(\PP^2,C)$, where $C\subset\PP^2$ is a nodal cubic curve. 
 Any NC-pair is dominated by an SNC-pair 
 via a birational morphism. 
 For instance, in the example above such a domination 
 is obtained after blowing $\PP^2$  up in the node of $C$ 
 and performing another blowup in a node of the resulting curve, 
 cf.\ Example \ref{deformations-remark}. 
\begin{definition}\label{def:bir-NC-pairs}
Let $(X,D)$ and $(X',D')$ be NC-pairs. We denote by
\begin{enumerate}
    \item 
$\Bir((X,D),(X',D'))$ the set of 
birational transformations  
$X\dashrightarrow X'$ that restrict 
to biregular isomorphisms of the complements 
$X\setminus \supp D\isom X'\setminus \supp D'$;
  \item $\Bir(X,D)$ the group $\Bir((X,D),(X,D))$;
  \item $\Aut(X,D)$ the group of  biregular 
  automorphisms of $X$ that preserve $D$;
    \item\label{Aut-nat-class} $\Aut^\natural(X,D)$ 
    the subgroup of $\Aut(X,D)$ of automorphisms 
   preserving every component and  
   every node of $D$;
  \item $\Aut^{\0}(X,D)$ the identity component 
  of $\Aut(X,D)$.
\end{enumerate}
\end{definition}
\begin{remark} 
The group $\Bir(X,D)$ is naturally isomorphic 
to the automorphism 
group $\Aut(Y)$ of the complement 
$Y=X\setminus \supp D$.
Indeed, for $g\in\Bir(X,D)$  the restriction $g|_Y$ 
is an automorphism of $Y$, and any 
$f\in\Aut (Y)$ 
extends to a unique birational transformation 
$(X, D) \dasharrow (X,D)$. 

Thus, the group $\Bir(X,D)=\Aut (Y)$ 
does not depend  on the choice of 
an NC-completion $(X,D)$ 
of $Y=X\setminus \supp D$. 
However,  in general,
 the group $\Aut (X,D)$ depends on the chosen 
NC-completion. For instance, 
for the completion $\AA^2\embed \PP^2$  
of the affine plane we have 
$\Aut (\PP^{2},\PP^{1})\simeq\Aff(\AA^2)$, whereas 
for the completion $\AA^2\embed
\PP^1\times\PP^1$ we have
  \[\Aut(\PP^1\times\PP^1,\PP^{1}\times \{\infty\} \cup 
  \{\infty\} \times \PP^{1})\cong
  (\Aff(\AA^{1}))^2\rtimes  \ZZ_2.\]
 \end{remark}
\subsection{Inner transformations}
\begin{definition}\label{def:inner}
 A blowup of an NC-pair $(X,D)$ with center $x\in \supp D$ 
 gives again an NC-pair. Both pairs provide
 NC-completions of $Y=X\setminus\supp D$.
 Such a blowup is called \textit{inner} 
if $x$ is a node of $D$ and \textit{outer} otherwise. 
We say that a blowdown of a component of $D$, 
producing a new NC-pair, is \textit{inner} 
(resp. \textit{outer}) if the inverse blowup 
is inner (resp. outer). 
\end{definition}
\begin{definition}\label{def:inner-1}
A birational transformation 
$(X,D)\dashrightarrow(X',D')$ between two NC-pairs 
is called \textit{inner} 
if it can be decomposed into a sequence of inner blowups, 
inner blowdowns and isomorphisms of NC-pairs.
The subset of inner birational transformations 
forms a subgroup $\Inn(X,D)\subset\Bir (X,D)=\Aut(Y)$.
\end{definition}
\begin{lemma}\label{lem:inclusions}
We have \[\Aut^{\0}(X,D)\subset\Aut^\natural(X,D)
\subset\Aut(X,D)\subset\Inn(X,D)\subset\Bir(X,D) =\Aut(Y).\]
\end{lemma}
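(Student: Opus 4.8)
The plan is to establish the chain of inclusions $\Aut^{\0}(X,D)\subset\Aut^\natural(X,D)\subset\Aut(X,D)\subset\Inn(X,D)\subset\Bir(X,D)$ one link at a time, from left to right. The last two inclusions are essentially formal; the real content is the middle inclusion $\Aut(X,D)\subset\Inn(X,D)$, which I expect to be the main obstacle.

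For the first inclusion $\Aut^{\0}(X,D)\subset\Aut^\natural(X,D)$, I would argue that the natural action of $\Aut(X,D)$ on the finite set of irreducible components of $D$ together with its finite set of nodes gives a homomorphism from $\Aut(X,D)$ to a finite symmetric group; since $\Aut^{\0}(X,D)$ is connected, its image under this homomorphism is trivial, so it fixes every component and every node of $D$, i.e.\ lands in $\Aut^\natural(X,D)$. The inclusion $\Aut^\natural(X,D)\subset\Aut(X,D)$ is immediate from the definition, and $\Inn(X,D)\subset\Bir(X,D)$ holds because $\Inn(X,D)$ was defined in Definition~\ref{def:inner-1} as a subset of $\Bir(X,D)$.

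The crux is $\Aut(X,D)\subset\Inn(X,D)$: a biregular automorphism $g$ of $X$ preserving $D$ is in particular a birational transformation biregular on $Y=X\setminus\supp D$, hence lies in $\Bir(X,D)$, but I must also exhibit it as a composition of inner blowups, inner blowdowns and isomorphisms of NC-pairs. The natural strategy is to apply the standard factorization of a birational map between smooth (or smooth-along-$D$) surfaces into blowups and blowdowns to $g\colon(X,D)\to(X,D)$, and then check that every blowup and blowdown occurring can be taken to be inner. Here the key point is that $g$ maps $D$ isomorphically onto $D$, so the indeterminacy locus of $g$, as well as that of $g^{-1}$, is contained in $\supp D$; resolving $g$ by blowing up a common NC-completion $(\tilde X,\tilde D)$ dominating both copies of $(X,D)$, every blown-up point lies over $\supp D$ on both sides, and because $D$ is a normal crossing divisor each such infinitely near point is forced to be a node of the total transform of $D$ — an outer blowup would create a component of $\tilde D$ not contracted on either side, contradicting that $(\tilde X,\tilde D)$ resolves an automorphism of $(X,D)$. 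Thus all the blowups (and, taking inverses, all the blowdowns) in the factorization are inner, so $g\in\Inn(X,D)$.

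The main obstacle, as noted, is making this last argument precise: one must carefully set up the resolution of the automorphism $g$ within the category of NC-pairs, control where the centers of the blowups sit relative to the boundary, and verify that "outer blowups cannot occur" claim — essentially a bookkeeping argument on the dual graph, since outer blowups change the boundary graph in a way incompatible with both $g$ and $g^{-1}$ being biregular and $D$-preserving. It is plausible that this is where the paper invokes the graph-theoretic dictionary (Proposition~\ref{def:repr}) to replace the surface argument by a cleaner combinatorial one, but at the level of this plan I would carry it out directly on the surfaces via the factorization theorem cited in the introduction.
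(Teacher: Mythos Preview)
Your plan is correct, but you have significantly overcomplicated the inclusion $\Aut(X,D)\subset\Inn(X,D)$, which you flag as ``the crux'' and ``the main obstacle.'' In fact this inclusion is immediate from Definition~\ref{def:inner-1}: an inner birational transformation is by definition one that decomposes into inner blowups, inner blowdowns, \emph{and isomorphisms of NC-pairs}. Any $g\in\Aut(X,D)$ is already an isomorphism of the NC-pair $(X,D)$ with itself, so $g$ is inner as a sequence of length one---no blowups or blowdowns are required at all. Your factorization argument is in any case vacuous here: a biregular automorphism has empty indeterminacy locus, so resolving it needs zero blowups, and your delicate discussion of why the centers must be nodes never gets off the ground. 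This is why the paper's entire proof is the single sentence ``This is immediate from our definitions.''

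Your treatment of the first inclusion $\Aut^{\0}(X,D)\subset\Aut^\natural(X,D)$ via the homomorphism to the finite permutation group on components and nodes is correct and is the standard justification; the paper simply absorbs this into ``immediate.'' The remaining two inclusions are, as you note, purely definitional.
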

\begin{proof}
This is immediate from our definitions.
\end{proof}
\begin{lemma}\label{lem:connected-natural}
$\Aut^{\0}(X,D)=(\Aut^\natural(X,D))^{\0}$ 
is a (connected) algebraic group.
\end{lemma}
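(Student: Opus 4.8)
The plan is to prove Lemma \ref{lem:connected-natural} in two stages: first identify the identity component $\Aut^{\0}(X,D)$ with $(\Aut^\natural(X,D))^{\0}$, and then show that $\Aut^\natural(X,D)$ — hence its identity component — is an algebraic group. For the first identification, note that $\Aut^\natural(X,D)$ has finite index in $\Aut(X,D)$: an element of $\Aut(X,D)$ permutes the finitely many irreducible components of $D$ and the finitely many nodes of $D$, so there is a homomorphism from $\Aut(X,D)$ to the (finite) product of the symmetric groups on these two finite sets, and $\Aut^\natural(X,D)$ is the kernel. A finite-index subgroup of a topological group contains the identity component, and is itself open and closed, so $(\Aut^\natural(X,D))^{\0}=\Aut^{\0}(X,D)$ by Lemma \ref{lem:inclusions} together with this finite-index observation.

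For the second stage, the key point is that $\Aut(X,D)$ acts on the projective surface $X$ preserving the divisor $D$, so it is a closed subgroup of $\Aut(X)$, and the latter is an algebraic group because $X$ is projective (the identity component of the automorphism group of a projective variety is an algebraic group, and the full group $\Aut(X)$ is then a locally algebraic, i.e.\ at most countable union of, algebraic varieties). More precisely, $\Aut(X,D)$ is the stabilizer of the point $[D]$ (or of the subscheme $D$) in the action of $\Aut(X)$ on the appropriate Hilbert scheme or Chow variety of $X$, hence a closed algebraic subgroup of $\Aut(X)$. Therefore $\Aut(X,D)$ is an algebraic group, and so is its finite-index subgroup $\Aut^\natural(X,D)$, whose identity component is then a connected algebraic group. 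Combining with the first stage, $\Aut^{\0}(X,D)=(\Aut^\natural(X,D))^{\0}$ is a connected algebraic group.

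The main obstacle — or rather the point requiring care — is making precise the claim that $\Aut(X,D)$ is an algebraic group when $X$ is merely normal projective (not smooth). Here one uses that $\Aut(X)$ of a projective variety is a group scheme locally of finite type (a classical result going back to Matsumura–Oort), so it suffices to exhibit $\Aut(X,D)$ as a closed subgroup scheme; this follows from representability of the functor fixing the closed subscheme $D\subset X$, equivalently from the closedness of the incidence condition in the Hilbert scheme. An alternative, more hands-on route, is to note that $D$ being a hyperplane section for some projective embedding (after replacing the embedding by one given by a suitable multiple of an ample divisor supported on $D$ plus its complement, or simply by any ample line bundle together with the condition that $D$ be preserved), one may linearize the action and realize $\Aut(X,D)$ as a closed subgroup of some $\PGL_N$. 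Either way, the finiteness of $\pi_0(\Aut(X)/\Aut(X,D)\text{-orbit data})$ and properness of $X$ do the work; no delicate estimates are needed, and the lemma is essentially a packaging of standard facts.
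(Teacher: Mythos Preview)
Your proof is correct and follows essentially the same approach as the paper: both invoke the classical Matsumura--Oort result that $\Aut^{\0}(X)$ is an algebraic group for projective $X$, and then identify $\Aut^{\0}(X,D)$ as a closed connected subgroup. The paper's proof is simply much terser---two sentences citing \cite{MO, Ma, Ra} and asserting the closed-subgroup property---whereas you spell out the stabilizer argument and the finite-index step.

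One minor remark: your finite-index argument for the equality $\Aut^{\0}(X,D)=(\Aut^\natural(X,D))^{\0}$ is correct but unnecessary. Since Lemma~\ref{lem:inclusions} already gives $\Aut^{\0}(X,D)\subset\Aut^\natural(X,D)\subset\Aut(X,D)$, the equality of identity components is immediate: the connected group $\Aut^{\0}(X,D)$ lies in $\Aut^\natural(X,D)$ and hence in its identity component, while conversely $(\Aut^\natural(X,D))^{\0}$ is a connected subgroup of $\Aut(X,D)$ and hence lies in $\Aut^{\0}(X,D)$. No appeal to finite index is needed. Also, your phrase ``a finite-index subgroup of a topological group contains the identity component'' requires the subgroup to be closed, which holds here since $\Aut^\natural(X,D)$ is the kernel of a homomorphism to a finite group; you might make this explicit.
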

\begin{proof}  It is well known that $\Aut^{\0}(X)$ 
is a connected algebraic group, see e.g. \cite{MO, Ma, Ra}, 
and so is its closed connected subgroup $\Aut^{\0}(X,D)$.  
\end{proof}
\begin{lemma}\label{lem:conjug}
Every $\phi\in\Inn(X,D)$ can be written as 
\begin{equation}\label{eq:isomorphism} \phi
=\alpha\sigma_n^{\pm 1}\cdots\sigma_1^{\pm 1}
={\sigma'}_n^{\pm 1}\cdots{\sigma'}_1^{\pm 1}\beta
\end{equation}
where $\alpha, \beta\in\Aut (X,D)$ and the 
$\sigma_i, \sigma_i'$ are inner blowdowns. 
\end{lemma}
\begin{proof}
Consider a commutative diagram
\[
\begin{tikzpicture}
	\node (empty) at (-3,0) {};
  \node  (INN) at (0,2) {$(X_1,D_1)$};
  \node  (Inn) at (0,0)  {$(X'_1,D'_1)$};
  \node (iNN)  at (4,2)  {$ (X_2,D_2)$};
  \node  (inn) at (4,0)  {$(X'_2,D'_2)$};
  \draw[->, dashed] (INN) to node[above] {$\sigma^{\pm 1}$}  (iNN);
  \draw[->] (INN) to node[left] {$\alpha_1$} (Inn);
  \draw[->] (INN) to node[right] {$\simeq$} (Inn);
  \draw[->, dashed] (Inn) to node[above] {${\sigma'}^{\pm 1}$}  (inn);
  \draw[->] (iNN) to node[right] {$\alpha_2$}   (inn);
  \draw[->] (iNN) to node[left] {$\simeq$}   (inn);
\end{tikzpicture}
\]
where $\sigma$ and $\sigma'$ are inner blowdowns. 
Given a pair $(\sigma',\alpha_1)$ there exists 
a unique pair $(\sigma,\alpha_2)$ 
fitting in the diagram, and vice versa. 
Now the claim follows. 
\end{proof}
\begin{lemma}\label{diez-comm}
Every inner birational transformation 
$\phi\colon(X,D)\dashrightarrow(X',D')$ 
of NC-pairs 
induces an isomorphism
\[\phi_*\colon\Aut^\natural (X,D)
\stackrel{\simeq}{\longrightarrow} \Aut^\natural (X',D'),\quad
 \alpha\mapsto \phi\circ\alpha\circ\phi^{-1}.\] 

In particular, $\Aut^\natural (X,D)$ is a normal 
subgroup of $\Inn(X,D)$.
\end{lemma}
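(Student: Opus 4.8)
The plan is to reduce everything to the case of a single inner blowup or inner blowdown, since an arbitrary inner birational transformation $\phi$ decomposes, by Definition \ref{def:inner-1}, into a finite sequence of such elementary steps and isomorphisms of NC-pairs; the conjugation relation, being visibly transitive under composition, then follows by induction on the length of such a decomposition. For an isomorphism $\psi\colon(X,D)\isom(X',D')$ of NC-pairs the relation $\psi\circ\Aut^\natural(X,D)=\Aut^\natural(X',D')\circ\psi$ is clear, because $\psi$ carries the components and nodes of $D$ bijectively onto those of $D'$, and an automorphism fixing every component and node is transported by $\psi$ to one with the same property. So it remains to treat the single-blowup case, and by symmetry (taking inverses) it suffices to handle an inner blowup $\pi\colon(X',D')\to(X,D)$ with center a node $p=C_i\cap C_j$ of $D$.

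The key step is then the following: given $g\in\Aut^\natural(X,D)$, I claim $\pi^{-1}\circ g\circ\pi$ extends to an element of $\Aut^\natural(X',D')$, and conversely every element of $\Aut^\natural(X',D')$ descends through $\pi$ to an element of $\Aut^\natural(X,D)$, which together give the two inclusions making up the asserted equality. Since $g$ fixes the node $p$ (being in $\Aut^\natural$) and is biregular near $p$, the universal property of the blowup of $X$ at the point $p$ produces a unique biregular lift $g'\colon X'\to X'$ with $\pi\circ g'=g\circ\pi$; that $g'$ preserves $D'$ follows because $g$ preserves $D$ and $g'$ maps the exceptional curve $E=\pi^{-1}(p)$ to itself (as $g$ fixes $p$). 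Finally $g'$ fixes every component and every node of $D'$: the components of $D'$ are $E$ together with the strict transforms $\widetilde{C}_k$ of the components $C_k$ of $D$, and $g'$ fixes $\widetilde{C}_k$ because $g$ fixes $C_k$, while the nodes of $D'$ are the nodes of $D$ other than $p$ (whose strict transforms $g'$ fixes since $g$ does) together with the two new nodes $E\cap\widetilde{C}_i$ and $E\cap\widetilde{C}_j$, each of which $g'$ fixes because $g$ fixes $C_i$ and $C_j$ individually and hence $g'$ cannot interchange these two points. The converse direction is the same argument run through $\pi$ in the other direction: an element of $\Aut^\natural(X',D')$ fixes $E$ and contracts it compatibly, so it descends to a biregular automorphism of $X$ fixing $p$ and every other component and node of $D$.

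I expect the main obstacle to be purely bookkeeping rather than conceptual: one must be careful, when $(X,D)$ is merely an NC-pair and not an SNC-pair, that a node of $D$ may arise either from two distinct components meeting, or from a self-intersection of a single nodal component, or from two components meeting at a point where they already meet elsewhere; in all these cases an inner blowup separates a local branch situation, and one needs to check that an $\Aut^\natural$-automorphism, which by definition preserves the (abstract) node as well as each component, indeed fixes each of the finitely many local branches through that node, so that no swapping of new exceptional nodes can occur upstairs. Once this local branch analysis is in place, the extension/descent of automorphisms through a single inner blowup is a direct application of the universal property of blowing up a smooth point on a smooth surface germ, and the inductive assembly over a decomposition of $\phi$ is routine. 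The final sentence of the lemma — normality of $\Aut^\natural(X,D)$ in $\Inn(X,D)$ — is then immediate by taking $(X',D')=(X,D)$ and $\phi\in\Inn(X,D)$ in the displayed relation.
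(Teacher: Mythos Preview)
Your proposal is correct and follows essentially the same approach as the paper: both reduce to the case of a single inner blowup/blowdown and observe that conjugation by such a map induces an isomorphism between the $\Aut^\natural$ groups. The paper's proof is a single sentence asserting that $\sigma_*\colon\alpha\mapsto\sigma\circ\alpha\circ\sigma^{-1}$ is an isomorphism for an inner blowdown $\sigma$, while you unpack this via the universal property of the blowup and verify explicitly that components and nodes are preserved; you also flag the NC-versus-SNC branch-swapping subtlety, which the paper does not mention.
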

\begin{proof} The statement is definitely 
true for inner blowups and blowdowns. 
Hence, it holds in general. 
\end{proof}
\begin{remark}
The group $\Aut^\natural(X,D)$ is the kernel 
of the  natural homomorphism 
$\Aut(X,D)\to\Aut(\Gamma(D))$, where $\Gamma(D)$ 
is the dual graph of $D$, see Definition \ref{def:dual}.
\end{remark}
\begin{proposition}\label{prop:countable} 
The group $\Inn(X,D)/\Aut^\natural (X,D)$ is countable. 
\end{proposition}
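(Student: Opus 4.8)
The plan is to reduce, by multiplicativity of indices, to counting left cosets of $\Aut(X,D)$ in $\Inn(X,D)$, and then to pin down each such coset by a resolution of indeterminacies lying in a locally finite tree of inner blowups. First I would record that, by Lemma~\ref{lem:inclusions} and Lemma~\ref{diez-comm}, one has $\Aut^\natural(X,D)\trianglelefteq\Inn(X,D)$ with $\Aut^\natural(X,D)\subseteq\Aut(X,D)\subseteq\Inn(X,D)$, so that $[\Inn(X,D):\Aut^\natural(X,D)]=[\Inn(X,D):\Aut(X,D)]\cdot[\Aut(X,D):\Aut^\natural(X,D)]$ as cardinals, and the left-hand side equals $|\Inn(X,D)/\Aut^\natural(X,D)|$. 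The second factor on the right is finite, since by the remark preceding this proposition $\Aut(X,D)/\Aut^\natural(X,D)$ embeds into $\Aut(\Gamma(D))$, which is a finite group because $\Gamma(D)$ is a finite weighted graph. Hence it suffices to show that $\Inn(X,D)$ meets only countably many left cosets of $\Aut(X,D)$.

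Next I would introduce the rooted tree $\mathcal T$ with root $(X,D)$ in which the children of an NC-pair $(Z,D_Z)$, equipped with a fixed birational morphism to $(X,D)$ which is a composition of inner blowups, are the pairs obtained from $(Z,D_Z)$ by one inner blowup, i.e.\ by blowing up one of the nodes of $D_Z$. Since a reduced normal crossing divisor on a projective surface has finitely many nodes, $\mathcal T$ is locally finite, hence has countably many vertices; let $\mathcal S$ be the resulting countable set of pairs $(\hat X,\hat D)$ together with their structure morphism $\tau\colon(\hat X,\hat D)\to(X,D)$.

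The key geometric input is an \emph{inner} version of elimination of indeterminacies: for every $\phi\in\Inn(X,D)$ there is a triple $(\hat X,\hat D,\tau)\in\mathcal S$ such that $\pi:=\phi\circ\tau$ is a morphism which is a composition of inner blowdowns and isomorphisms of NC-pairs. I would prove this by induction on the length of a decomposition of $\phi$ into elementary inner moves, propagating a common inner resolution across a single inner blowup, inner blowdown, or isomorphism. The cases of an inner blowdown and of an isomorphism are immediate (one just post-composes), while the inner blowup case requires checking that the extra indeterminacy point created is a node of the relevant boundary divisor, so that resolving it is again an inner blowup; this is the step I expect to be the main obstacle, and it is exactly the bookkeeping that runs in parallel with birational transformations of weighted graphs in Section~\ref{sec:NC-pairs}.

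Granting this, the count is short. Fix $(\hat X,\hat D,\tau)\in\mathcal S$, and let $\phi=\pi\circ\tau^{-1}$ be as above. Since $\pi$ is a composition of inner blowdowns, its exceptional locus consists of components of $\hat D$ (induct on the length of $\pi$: the strict transform on $\hat X$ of a boundary component of an intermediate pair is again a component of $\hat D$), so $\operatorname{Exc}(\pi)$ is one of the at most $2^{N}$ subsets of the set of the $N<\infty$ components of $\hat D$. If $\phi'=\pi'\circ\tau^{-1}\in\Inn(X,D)$ is another such transformation with $\operatorname{Exc}(\pi')=\operatorname{Exc}(\pi)$, then $\pi$ and $\pi'$ each coincide, up to an automorphism of the target $X$, with the contraction of this common configuration, so $\pi'=\alpha\circ\pi$ for some $\alpha\in\Aut(X)$; then $\alpha=\phi'\circ\phi^{-1}\in\Bir(X,D)\cap\Aut(X)$, and since such an $\alpha$ preserves $Y=X\setminus\supp D$, hence $\supp D$, hence the reduced divisor $D$, we get $\alpha\in\Aut(X,D)$. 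Thus the map $\phi\mapsto\operatorname{Exc}(\phi\circ\tau)$ has each fibre contained in a single left $\Aut(X,D)$-coset, so the $\phi\in\Inn(X,D)$ that factor through a given $(\hat X,\hat D,\tau)$ occupy at most $2^{N}$ such cosets. Summing over the countable set $\mathcal S$ shows that $\Inn(X,D)$ meets only countably many left cosets of $\Aut(X,D)$, which together with the first paragraph proves that $\Inn(X,D)/\Aut^\natural(X,D)$ is countable.
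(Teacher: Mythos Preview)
Your proof is correct and follows a genuinely different route from the paper's. Both arguments begin by reducing, via the finiteness of $\Aut(X,D)/\Aut^\natural(X,D)$, to counting left $\Aut(X,D)$-cosets in $\Inn(X,D)$. From there the paper uses a short commutation trick: in any word expressing $\phi\in\Inn(X,D)$ as a composition of inner blowups, inner blowdowns and isomorphisms, one slides every isomorphism to one end (the square ``isomorphism then inner blowdown $=$ inner blowdown then isomorphism'' does this in one step), so that each coset has a representative $\psi=\sigma_n^{\pm1}\cdots\sigma_1^{\pm1}$ built purely from inner blowups and blowdowns; since at each stage there are only finitely many nodes to blow up and finitely many contractible boundary components to blow down, such words form a countable set. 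You instead produce for each $\phi$ a dominating inner resolution $\tau\colon(\hat X,\hat D)\to(X,D)$ lying in the locally finite tree $\mathcal T$, and then pin the coset down by the finite datum $\operatorname{Exc}(\phi\circ\tau)$ among the components of $\hat D$. This is more geometric and closer to classical elimination of indeterminacies; the paper's argument is shorter and purely combinatorial. Your acknowledged ``main obstacle''---the inner version of resolution of indeterminacies---is real but routine: the parallelogram construction used later in the proof of Proposition~\ref{prop:domination} goes through verbatim once one observes that the common refinement of two inner blowups at distinct nodes is again an inner blowup of each, while two inner blowups at the same node yield canonically isomorphic pairs. Note also that the same commutation square the paper exploits would let you dispose of the isomorphism case in your induction without further comment.
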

\begin{proof} Since $\Aut (X,D)/\Aut^\natural (X,D)$ 
is a finite group, it suffices to show that $\,$
the collection $\Inn(X,D)/\Aut (X,D)$  
of left cosets is countable. 

By Lemma \ref{lem:conjug} any left coset in $\Inn(X,D)/\Aut (X,D)$ 
has the form $\psi\cdot\Aut (X,D)$ for some 
$\psi={\sigma'}_n^{\pm 1}
\cdots{\sigma'}_1^{\pm 1}\in\Inn(X,D)$. 
These cosets form a countable set. 
\end{proof}
\subsection{The ind-structure on $\Bir(X,D)$}
\label{ss:degree-function}
Let an NC-pair $(X,D)$  be  a completion of 
a normal affine surface
$Y=X\setminus\supp D$. 
By Goodman's criterion of affiness for surfaces
\cite[Theorem~2]{Go69} 
we can find an ample divisor $H\subset X$ such that 
$\supp H = \supp D$. This defines 
an ample polarization on $X$.
Replacing $H$ by its suitable 
high multiple, the very ample 
linear system $|H|$ defines an 
embedding $\phi_{|H|}\colon X\hookrightarrow\PP^n$ 
that sends the divisor $H$ 
to a hyperplane section of $X$. 
The restriction of $\phi_{|H|}$ to $Y$ 
yields a proper embedding
$Y=X\setminus \supp H\hookrightarrow\AA^n=
\PP^n\setminus \PP^{n-1}$. 
\begin{definition}[{\rm cf. e.g. \cite[(1.1)]{Cantat-Xie}}]
For $\Phi\in \Bir(X,D)$ we define its degree as 
$\deg\Phi=H\cdot \Phi^*(H)$. 
For $\Phi\in\Aut(Y)$ we understand $\deg\Phi$ 
as the degree of  the natural extension $\hat\Phi\in\Bir(X,D)$. 
\end{definition}
\begin{remarks} 1. 
The degree function $\deg\Phi$ on $\Bir(X,D)$
coincides 
with the usual degree of $\Phi|_Y\in\Aut(Y)$ viewed as 
an automorphism of the closed subvariety
$Y\subset\AA^n$.
Indeed, let $\Phi$ be written in homogeneous coordinates 
on $\PP^n$ as 
\[\Phi=(p_0\colon\ldots\colon p_n)|_X\] 
where the $p_i$ are  homogeneous polynomials in $n+1$ 
variables of the same degree $d$, 
that do not belong simultaneously to the homogeneous 
ideal of $X$. 
Then $\deg\Phi=\min\{d\}$ 
where the minimum is taken over all such presentations. 

2. We have $\Bir(X,D)=\varinjlim \Bir(X,D)_{\le d}$ where
\[\Bir(X,D)_{\le d}=\big\{\Phi\in \Bir(X,D)\,|\,
\max\{\deg\Phi,\,\deg\Phi^{-1}\}\le d\big\}\] 
is an affine algebraic variety and 
$\Bir(X,D)_{\le d}\subset\Bir(X,D)_{\le d+1}$ 
is a closed embedding.
This yields an affine ind-structure on 
$\Bir(X,D)=\Aut(Y)$. 
It is easily seen that this ind-structure coincides 
with the usual ind-structure on $\Aut(Y)$ defined via
the induced closed embedding $Y\hookrightarrow\A^n$, 
see e.g. \cite[Section 5.1]{FK} or \cite[Section 2.1]{KPZ}. 
On the other hand, in the case of 
 a rational surface $X$ the group $\Bir(X)$ 
carries no natural ind-structure, see \cite{BF13}.

3. The very ample linear system $|H|$ 
is stable under 
the action of $\Aut^\natural(X,D)$ on $(X,D)$.
Hence the embedding
\[\phi_{|H|}\colon (X, \supp D)=
(X, \supp H)\hookrightarrow (\PP^n, \PP^{n-1})\]
induces an embedding of
$\Aut^\natural(X,D)$ 
onto a closed subgroup of  
$\Aut(\PP^n, \PP^{n-1})$.
Therefore, $\Aut^\natural(X,D)$ 
is an affine algebraic group.
\end{remarks}
Recall the following definitions; see e.g. \cite{FK}. 
One says that 
a subgroup $G\subset \Bir(X,D)$ is connected 
in the Zariski ind-topology if every
element $g\in G$ belongs to a connected algebraic 
subset of $G$ 
that contains the identity. 
The identity component $G^\0$ of a  subgroup 
$G\subset \Bir(X,D)$ is the maximal connected 
subgroup of $G$.
\begin{proposition} \label{prop:ind-sbgrp}
We have $\Inn^\circ(X,D)=\Aut^\circ(X,D)$. 
In particular, $\Inn^\circ(X,D)$ is 
a connected affine algebraic group.
\end{proposition}
\begin{proof} Assume first that 
the ground field $\K$ is uncountable. 
Recall that  $\Aut^\natural(X,D)$ is 
a normal affine algebraic subgroup 
of $\Inn(X,D)$ of countable index closed  
in $\Bir(X,D)$, see Lemma \ref{diez-comm} 
and Proposition \ref{prop:countable}. 
Therefore, $\Inn(X,D)$
is a countable union
of mutually disjoint closed algebraic subvarieties 
of $\Bir(X,D)$ of the form 
$h_k\circ\Aut^\natural(X,D)$ 
with $h_k\in\Inn(X,D)$ for $k\in\NN$. 
Let $A$ be an irreducible algebraic subvariety  of 
$\Bir(X,D)=\Aut(Y)$ of positive dimension contained in 
$\Inn(X,D)$. Since the set of $\K$-points of $A$ 
is uncountable, 
$A$ is actually contained in 
one of the left cosets $h_k\circ\Aut^\natural(X,D)$, 
see \cite[Lemma~1.3.1]{FK}. 
If $\id_{(X,D)}\in A$ then $A\subset 
(\Aut^\natural(X,D))^\circ=\Aut^\circ(X,D)$. Now
the equality $\Inn^\circ(X,D)=\Aut^\circ(X,D)$ follows. 

Suppose now that $\K$ is countable. 
We embed $\K$ in 
the uncountable algebraically closed field $\LL$; for instance, 
one can take for $\LL$ the algebraic closure of the function field 
$\K(x_t\,|\,t\in\RR)$. Let ${\rm Gal}={\rm Gal}(\LL/\K)$ 
stand for the Galois group 
of the field extension 
$\K\subset\LL$. Then $\K=\LL^{\rm Gal}$. Moreover, 
for any algebraic $\K$-variety $Z$ we have 
$Z=Z(\LL)^{\rm Gal}$ and $Z(\LL)$ is irreducible 
if and only if $Z$ is, 
see \cite[Sec.~1.11]{FK} and 
\cite[\href{https://stacks.math.columbia.edu/tag/037K}{Lemma~037K}]
{SP-CA}. 
In particular,
$(X,D)=(X(\LL),D(\LL))^{\rm Gal}$. Furthermore,
\[\Aut^\natural(X(\LL),D(\LL))^{\rm Gal}=
(\Aut^\natural(X,D)(\LL))^{\rm Gal}=\Aut^\natural(X,D),\]
and likewise for $\Aut^\circ(X,D)=(\Aut^\natural(X,D))^\circ$.
Since $\Inn(X,D)=\bigcup_k h_k\circ\Aut^\natural(X,D)$
we have $\Inn(X(\LL),D(\LL))^{\rm Gal}=\Inn(X,D)$.
 
 Applying the first part of the proof we get
 $\Inn^\circ(X(\LL),D(\LL))=\Aut^\circ(X(\LL),D(\LL))$.
Passing to  the ${\rm Gal}$-fixed point loci in the latter equality 
we obtain $\Inn^\circ(X,D)=\Aut^\circ(X,D)$. 
\end{proof}
\section{Birational geometry 
of weighted graphs}\label{prelim-graphs} 
The birational geometry of  NC-pairs has its 
combinatorial counterpart, namely, the 
birational geometry of weighted graphs, 
see the references in the Introduction. 
\subsection{Weighted graphs}
\begin{definition}\label{def:rat-vert}
Let $\Gamma$ be a (nonempty)  finite 
weighted graph with the set of vertices 
$\Vert(\Gamma)$ 
and the set of edges $\Reb(\Gamma)$, 
where vertices are weighted by integer numbers, 
and let $\Ratio(\Gamma)\subseteq\Vert(\Gamma)$ 
be a distinguished subset of vertices  
which we call \textit{rational vertices}. 
A rational vertex of weight $-1$ 
(resp. of weight $0$) is called 
($-1$)\textit{-vertex} 
(resp. ($0$)\textit{-vertex}). 
The graph $\Gamma$ may be disconnected 
and may contain loops,
cycles and multiple edges, that is, several 
edges joining the same pair of vertices.
\end{definition}
\begin{nota} Following \cite{FKZ-graphs}
we let $[[a_1,\ldots,a_n]]$ be 
a linear weighted graph
 with $n$ ordered
vertices of weights $a_1,\ldots,a_n$.
We also let $((a_1,\ldots,a_n))$ be 
a circular weighted graph 
with $n$ cyclically ordered 
vertices of weights $a_1,\ldots,a_n$. 
\end{nota}
\begin{definition}\label{def:vertices} $\,$
\begin{itemize}
\item
Given a weighted graph $\Gamma$, 
the \textit{degree} 
$\deg_{\Gamma}(v)$ of a vertex 
$v\in\Vert(\Gamma)$ is the
number of incident edges of $\Gamma$ at $v$, 
where every loop $[v,v]$ is counted
twice. We simply write $\deg(v)$ 
when the graph 
$\Gamma$ has been fixed.
\item A rational vertex $v$ of $\Gamma$ 
is called a \textit{tip} or an
\textit{end vertex} if $\deg(v)=1$ 
and
\textit{at most linear vertex} if $\deg(v)\le 2$ 
and no loop of $\Gamma$ is incident with $v$. 
We let $\Tip(\Gamma)$ be the set 
of tips of $\Gamma$.
\item A vertex $v$ is called 
a \textit{branch vertex}
if either $v$ is non-rational or $\deg(v)\ge 3$.
We let $\Br(\Gamma)$ be the set of 
all branch vertices of $\Gamma$. 
Thus, $\Vert(\Gamma)\setminus\Ratio(\Gamma)
\subseteq\Br(\Gamma)$.
\item
For a proper subset $V$ of $\Vert(\Gamma)$ we 
let $\Gamma\ominus V$ 
be the subgraph of $\Gamma$ obtained by deleting 
$V$ and all its incident edges, 
including the incident loops. 
\item Given a vertex $v\in \Gamma$, 
the connected components of
$\Gamma\ominus\{v\}$ linked to $v$ 
are the \textit{branches}
of $\Gamma$ at $v$. A branch linked to $v$ 
via exactly one edge
is called \textit{simple}. 
\end{itemize}
\end{definition}
\begin{definition}\label{def:segments}$\,$
\begin{itemize}
\item In the case where $V= \Br(\Gamma)$ is the set of 
branch vertices of $\Gamma$,
the connected
components of $\Gamma\ominus \Br(\Gamma)$ 
are called
\textit{segments}. 
A connected graph 
with no branch vertex coincides with its only segment. 

\item A segment can be either linear or circular. 
A circular segment 
is a cycle in $\Gamma$ which contains
no branch vertex; it is a connected component 
of $\Gamma$. 
A linear segment can consist of 
a single vertex. For example, this is 
the case for a vertex of degree 2 linked 
to a branch point 
by two edges.
\item A linear segment $L$ is called
\textit{extremal} if $L$ contains one or two 
tips of
$\Gamma$ and
 \textit{inner} if $L$ contains no  
  tip of $\Gamma$. 
\end{itemize}
\end{definition}
\begin{definition}\label{def:intersection-form}  
Let $\Gamma$ be a weighted graph  
with vertices $v_1,\ldots,v_n$. 
The \textit{intersection form} $I(\Gamma)$ 
is a bilinear form on $\RR^n$ given by 
the symmetric square matrix $A(\Gamma)=(a_{i,j})$, 
where $a_{i,i}$ is the weight of $v_i$ and for 
$j\neq i$, $a_{i,j}$ equals the number of 
edges joining $v_i$ and $v_j$. 
The \textit{discriminant} ${\rm discr}(\Gamma)$
is the determinant $\det(-A(\Gamma))$. 
We let $i_+(\Gamma), i_-(\Gamma)$ and 
$i_0(\Gamma)$ be the inertia indices of 
the associate quadratic form $I(\Gamma)$. 
\end{definition}
\begin{definition}\label{def:blowup}
  Let $[v_1,v_2]$ (where possibly $v_1=v_2$) 
  be an edge of 
  $\Gamma$.
 The \textit{inner blowup} in $[v_1,v_2]$ consists 
 in adding a new ($-1$)-vertex $v$,
 replacing the edge  $[v_1,v_2]$ by two new edges 
 $[v_1,v]$ and $[v,v_2]$ and
 decreasing the weights of $v_1$ and $v_2$  
 by $1$ if $v_1\neq v_2$ and by $4$ if 
 $v_1=v_2$ (i.e. $[v_1,v_2]$ is a loop).

The \textit{outer blowup} at 
a vertex $v$ in $\Gamma$ consists in 
introducing a new ($-1$)-vertex
$v^\prime$ along with a new edge 
$[v,v^\prime]$ and decreasing
the weight of $v$ by $1$.

The modifications inverse to inner and 
outer blowups are called
\textit{inner} and \textit{outer blowdowns}, respectively.
\end{definition}
\begin{remark}
The definition of inner birational transformations  
of weighted graphs  in 
\cite[Definitions 2.3 and 2.8]{FKZ-graphs} 
has a broader meaning. Indeed, 
the inverse of an outer blowup is
considered in {\em loc.cit.} to be inner. 
\end{remark}
\begin{proposition}[{\rm cf. \cite[Proposition 1.1]{Ne}, 
\cite[[Proposition 1.14]{Ru}, \cite[Lemma 4.6]{FKZ-graphs}}] 
\label{lem:inertia}
A blowup of a weighted graph $\Gamma$ adds $1$ to 
$i_-(\Gamma)$, while $i_+(\Gamma)$ and 
$i_0(\Gamma)$ remain unchanged. 
\end{proposition}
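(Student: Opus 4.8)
The plan is to reduce everything to a single linear-algebra fact: if $\Gamma'$ is obtained from $\Gamma$ by a blowup (inner or outer, as in Definition \ref{def:blowup}), then the matrix $A(\Gamma')$ is congruent over $\ZZ$ to the block matrix $A(\Gamma)\oplus(-1)$. Since a congruence preserves the inertia indices (Sylvester's law over $\RR$), and since the extra $1\times 1$ block $(-1)$ is negative definite, this immediately yields $i_+(\Gamma')=i_+(\Gamma)$, $i_0(\Gamma')=i_0(\Gamma)$ and $i_-(\Gamma')=i_-(\Gamma)+1$. A blowdown is the inverse of a blowup, so it suffices to treat blowups; by Definition \ref{def:blowup} there are exactly three cases to check: an outer blowup at a vertex $v_1$, an inner blowup in an edge $[v_1,v_2]$ with $v_1\neq v_2$, and an inner blowup in a loop $[v_1,v_1]$.

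In each case I would order the vertices of $\Gamma'$ as $v_1,\dots,v_n,v_{n+1}$, where $v_1,\dots,v_n$ are the vertices of $\Gamma$ and $v_{n+1}$ is the newly created $(-1)$-vertex, with corresponding standard basis $e_1,\dots,e_{n+1}$, and then exhibit an explicit unipotent $P\in\GL_{n+1}(\ZZ)$ with $\det P=1$ such that $P^{\mathrm T}\bigl(A(\Gamma)\oplus(-1)\bigr)P=A(\Gamma')$. The correct $P$ is the one dictated by the geometry of a blowup of a surface, where the strict transform of a curve equals its total transform minus the multiplicity of the center on that curve times the exceptional curve: for the outer blowup at $v_1$, let $P$ send $e_1\mapsto e_1-e_{n+1}$ and fix $e_2,\dots,e_{n+1}$; for the inner blowup in $[v_1,v_2]$, let $P$ send $e_i\mapsto e_i-e_{n+1}$ for $i\in\{1,2\}$ and fix the rest; for the inner blowup in the loop $[v_1,v_1]$, let $P$ send $e_1\mapsto e_1-2e_{n+1}$ and fix the rest. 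Using that in $A(\Gamma)\oplus(-1)$ the vector $e_{n+1}$ is orthogonal to every $e_i$ and satisfies $e_{n+1}^{\mathrm T}\bigl(A(\Gamma)\oplus(-1)\bigr)e_{n+1}=-1$, I would check entry by entry that $P^{\mathrm T}\bigl(A(\Gamma)\oplus(-1)\bigr)P$ has precisely the weights and edge multiplicities prescribed by Definition \ref{def:blowup}: the affected old vertices drop in weight by $1$ (by $4=2^2$ in the loop case), a single new edge (a double edge in the loop case) appears between each affected old vertex and $v_{n+1}$, the weight of $v_{n+1}$ is $-1$, and in the inner edge case the entry between $v_1$ and $v_2$ decreases by $1$.

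This verification is routine bilinear algebra, so I do not anticipate a genuine obstacle; the only point that requires care is guessing the correct $P$ in the loop case, where the coefficient $2$ — hence the weight drop by $4$ and the appearance of a double edge — reflects the multiplicity $2$ of a node, together with checking that the same block decomposition $A(\Gamma)\oplus(-1)$ indeed handles all three cases uniformly. As an alternative one could instead deduce the statement from its counterpart for NC-pairs (cf. \cite{Ne, Ru, FKZ-graphs}), but the direct congruence argument above is self-contained and short.
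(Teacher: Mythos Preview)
Your proposal is correct and essentially identical to the paper's proof: both show that $A(\Gamma')$ is congruent to $A(\Gamma)\oplus(-1)$ via the total-transform change of basis and then apply Sylvester's law of inertia. The paper phrases this uniformly as the map $\delta^*\colon v\mapsto v+I(\Gamma')(v,v_{n+1})\,v_{n+1}$, which is exactly the inverse of your matrix $P$ (extended by the identity on $e_{n+1}$) and automatically produces your coefficients $1,\,1,\,2$ in the outer, inner-edge and loop cases, respectively.
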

\begin{proof} 
Letting $\Vert(\Gamma)=\{v_1,\ldots,v_n\}$ 
consider the $\RR$-vector space $V=\bigoplus_i \RR v_i$  
with the inner product given by 
the intersection form $I(\Gamma)$. 
The symmetric matrix $M_n$ of $I(\Gamma)$ is diagonal 
in a suitable orthogonal  basis $\{e_1,\ldots,e_n\}$ of $V$.

A blowup $\delta\colon\Gamma'\dashrightarrow\Gamma$ 
creating a vertex  
$v_{n+1}$ induces a map $\delta^*\colon V\embed V'
=\bigoplus_{i=1}^{n+1} \RR v_i$ defined by 
\[\delta^*\colon v\mapsto v+I(\Gamma')(v, v_{n+1})v_{n+1}.\] 
The vector $v_{n+1}$ is an eigenvector of the matrix 
$M_{n+1}$ of $I(\Gamma')$  with eigenvalue $-1$. 
Indeed, one can check that $I(\Gamma')$ restricted on 
$\delta^*(V)\subset V'$ 
coincides with $\delta^*I(\Gamma)$ 
and $v_{n+1}$ is orthogonal to $\delta^*(V)$. 
The orthogonal basis in $V'$ of the eigenvectors 
\[\delta^*(e_1),\ldots,\delta^*(e_n),v_{n+1}\] 
is diagonalizing for $M_{n+1}$, which proves the claim. 
\end{proof} 
\begin{remark}
In the case where $\Gamma=\Gamma(D)$ 
is the dual graph of an NC-divisor $D$ on 
a smooth surface $X$ and $\delta\colon X'\to X$ 
is the inverse of the blowup 
of a point on $D$, the homomorphism $\delta^*$ 
sends  an $\RR$-divisor $N$ supported on $\supp D$ 
to its total transform $\delta^*(N)$ on $X'$, and 
$v_{n+1}$ represents the exceptional $(-1)$-curve of $\delta$. 
The properties of $\delta^*$ 
used in the proof follow in this case from the Projection Formula. 
\end{remark}
\begin{definition}\label{def:inner1}
A {\em birational  transformation of weighted graphs} 
$\phi:\Gamma\dashrightarrow \Gamma'$ 
is a finite sequence of blowups, blowdowns 
and isomorphisms. 
 If all these blowups and blowdowns are inner, 
 then $\phi$ is called {\em inner}. 
 Two weighted graphs are called 
 \textit{birationally equivalent} if one can 
 be transformed into the other by means of 
 a birational transformation. 
 \end{definition}
 From Proposition \ref{lem:inertia} 
 we deduce the following 
 \begin{corollary}\label{cor:inertia-indices}
 The inertia indices $i_+(\Gamma)$ and 
 $ i_0(\Gamma)$ are birational invariants. 
 \end{corollary}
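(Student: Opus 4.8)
The plan is to verify the assertion on the three elementary moves out of which every birational transformation of weighted graphs is built — isomorphisms, blowups and blowdowns (Definition \ref{def:inner1}) — and then to compose them. So the first step is to treat isomorphisms: an isomorphism $\Gamma\to\Gamma'$ of weighted graphs carries the intersection matrix $A(\Gamma)$ to $A(\Gamma')$ by a relabelling of the vertex set, i.e.\ $A(\Gamma')=P^{\top}A(\Gamma)P$ for a permutation matrix $P$. Such a congruence does not change the signature, so $i_+$, $i_-$ and $i_0$ are all preserved.

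The second step is the blowup case, which is already done: Proposition \ref{lem:inertia} asserts precisely that a blowup leaves $i_+(\Gamma)$ and $i_0(\Gamma)$ unchanged (adding $1$ only to $i_-$). The third step is the blowdown case, which needs no separate argument: by Definition \ref{def:blowup} a blowdown is the inverse of a blowup, so it too fixes $i_+$ and $i_0$ (and subtracts $1$ from $i_-$).

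Finally, since a birational transformation $\phi\colon\Gamma\dashrightarrow\Gamma'$ is by definition a finite sequence of moves of these three types, $i_+$ and $i_0$ are preserved at each step, whence $i_+(\Gamma)=i_+(\Gamma')$ and $i_0(\Gamma)=i_0(\Gamma')$. I do not expect any genuine obstacle here — the corollary is essentially a bookkeeping consequence of Proposition \ref{lem:inertia}, the only point to check beyond it being the (immediate) invariance under isomorphisms. It is worth recording, however, that $i_-$ is \emph{not} a birational invariant: along $\phi$ it changes by the number of blowups minus the number of blowdowns, which is exactly why the statement isolates $i_+$ and $i_0$.
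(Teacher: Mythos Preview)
Your argument is correct and matches the paper's approach: the corollary is stated there with no separate proof, merely as an immediate consequence of Proposition~\ref{lem:inertia}, and your unpacking via the decomposition in Definition~\ref{def:inner1} into isomorphisms, blowups and blowdowns is exactly how one deduces it.
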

\begin{remarks} \label{rems} 1.
Consider a weighted graph $\Gamma$ 
and a sequence $\phi$ of blowups, blowdowns 
and isomorphisms resulting in a final graph $\Gamma'$.
If, besides isomorphisms, $\phi$ includes also 
blowups or blowdowns, then one can ignore
 the isomorphisms that participate in $\phi$ 
 by simply renaming the created graphs, including 
 the final one. Anyway, one can reduce $\phi$ to a sequence 
 of blowups and blowdowns preceded or followed by 
 a single isomorphism,
see  formula \eqref{eq:isomorphism} in the proof of Proposition
\ref{prop:countable}. 

2. A birational transformation of weighted graphs 
$\phi\colon\Gamma\dasharrow\Gamma'$ 
is not a mapping of graphs: 
in general, it is not well defined neither 
on the sets of vertices nor on the sets of edges. 
Nonetheless, given a second birational transformation 
$\phi'\colon\Gamma'\dasharrow\Gamma''$ the composition 
$\phi'\circ\phi\colon \Gamma\dasharrow\Gamma''$ 
is a well defined birational transformation of weighted graphs. 
Also the identity ${\rm id}\colon \Gamma\to\Gamma$ 
and the inverse birational transformation 
$\phi^{-1}\colon\Gamma'\dasharrow\Gamma$ are  well defined. 
\end{remarks}
\subsection{Contractible graphs}
 \begin{definition}\label{def:morphism} A birational transformation 
 $\phi:\Gamma\dasharrow \Gamma'$ is 
 a {\em birational morphism} (or {\em a domination}) 
 if no blowup participates in $\phi$.
In this case we write $\phi:\Gamma\to \Gamma'$.  
If, moreover, $\phi$ is composed only of blowdowns 
then we say that $\phi$ is a \textit{contraction}. 
So,  
contractions are birational morphisms, but an isomorphism 
of graphs is not a contraction.
 \end{definition}
\begin{remark}\label{rem:composition}
The composition of birational morphisms (resp., contractions) 
of weighted graphs is a birational morphism (resp., a contraction). 
\end{remark}
\begin{definition}
A weighted graph $\Gamma$ is called \textit{minimal}
if any birational morphism $\Gamma\to\Gamma'$ 
is in fact an isomorphism. A graph $\Gamma$  
is minimal if and only if it has no
at most linear $(-1)$-vertex. Clearly, any weighted 
graph $\Gamma$ dominates a minimal one called 
a \textit{minimal model} of $\Gamma$.  
\end{definition}
A simple example of a minimal weighted graph is 
a graph consisting of a single vertex $v$ and a loop, 
cf. Example \ref{deformations-remark}. 
Indeed, an at most linear $(-1)$-vertex has 
no incident loop, see Definition \ref{def:vertices}. 
\begin{definition} \label{def:contractible}
 A connected weighted graph $\Gamma$
 with at least two vertices is said 
 to be {\em contractible} if there exists a contraction of
$\Gamma$
to a graph  $\Gamma'$ that consists of 
a single $(-1)$-vertex with 
no incident loop; the latter graph $\Gamma'$
also is considered to be contractible.
\end{definition}
The following lemma is well known. 
\begin{lemma}\label{lem:contractibility}  
Let $\Gamma$ be a weighted graph.
\begin{enumerate}
\item[(a)] If $\Gamma$ is contractible, 
then $\Gamma$ is a tree with only rational vertices. 
Furthermore, the intersection form $I(\Gamma)$ 
is negative definite of discriminant 1. 
\item[(b)] Conversely, if $\Gamma$ is a tree with 
only rational vertices and negative definite  
intersection form $I(\Gamma)$ of discriminant 1, 
then  $\Gamma$ is contractible.
\item[(c)] Contracting an at most linear $(-1)$-vertex 
of a contractible
graph  with at least two vertices yields a contractible 
graph and so does a blowup of a contractible graph. 
Thus, the contractibility is a birational invariant. 
\end{enumerate}
\end{lemma}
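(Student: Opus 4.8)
The plan is to prove the three parts in the order (a), (c), (b), since part (b) is the one requiring real work and it is cleanest to have the birational-invariance statement of (c) available first, even though only a weak form of (c) is logically needed at that point.

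For part (a), I would argue by induction on the number of vertices. A single $(-1)$-vertex with no loop is a rational tree with intersection matrix $(-1)$, which is negative definite of discriminant $1$. For the inductive step, if $\Gamma$ contracts to such a graph, then $\Gamma$ is obtained from a contractible graph $\Gamma_1$ with fewer vertices by a single inner or outer blowup (the inverse of the first blowdown in the contraction). By the inductive hypothesis $\Gamma_1$ is a rational tree with $I(\Gamma_1)$ negative definite of discriminant $1$. A blowup creates a new rational vertex attached by a single new edge (outer) or subdivides an existing edge (inner); in either case the result is again a tree, and all new vertices are rational, so $\Gamma$ is a rational tree. By Proposition \ref{lem:inertia} the blowup adds $1$ to $i_-$ and leaves $i_+$ and $i_0$ unchanged, so $I(\Gamma)$ is still negative definite; and one checks directly (or via the eigenvector description in the proof of Proposition \ref{lem:inertia}, where $v_{n+1}$ has eigenvalue $-1$ and is orthogonal to $\delta^*(V)$) that the discriminant is multiplied by $\det(-(-1)) = 1$, hence stays equal to $1$.

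For part (c), suppose $\Gamma$ is contractible with at least two vertices. By (a) it is a rational tree. A contractible graph with $\ge 2$ vertices must possess an at most linear $(-1)$-vertex: run any contraction to a point; its first step is a blowdown of some $(-1)$-vertex $v$, which by Definition \ref{def:blowup} (inner/outer blowdown) is either an end vertex or a linear vertex with no incident loop, i.e. at most linear. To see that contracting such a vertex keeps contractibility, note first that the existence of a contraction to a single $(-1)$-vertex is visibly preserved when we remove the first blowdown of that very contraction; for a general at most linear $(-1)$-vertex $v$ one uses that two such contractions of a tree differ by commuting the order of disjoint blowdowns, or simply re-derives it from (b) once (b) is proved. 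Dually, a blowup of a contractible graph is contractible: blow up, then immediately contract the new $(-1)$-vertex to recover the original, then continue with the original contraction. This gives that blowups and at-most-linear $(-1)$-contractions preserve contractibility, and since a general birational transformation of a connected graph is a composition of such moves on the connected component in question, contractibility is a birational invariant.

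For part (b), which I expect to be the main obstacle, let $\Gamma$ be a rational tree with $I(\Gamma)$ negative definite of discriminant $1$; I want to produce a contraction to a single $(-1)$-vertex. Induct on the number $n$ of vertices. If $n=1$, negative definiteness forces the weight to be negative and discriminant $1$ forces it to be $-1$, so $\Gamma$ is already a single $(-1)$-vertex (it has no loop, being a tree). For $n \ge 2$, the key claim is that $\Gamma$ contains an at most linear $(-1)$-vertex $v$. Granting this, contract $v$ to obtain $\Gamma'$ with $n-1$ vertices; by Proposition \ref{lem:inertia} (read backwards, a blowdown is the inverse of a blowup) $I(\Gamma')$ is again negative definite of discriminant $1$, and contracting an end or linear vertex of a tree yields a tree, still with only rational vertices; by the inductive hypothesis $\Gamma'$ is contractible, hence so is $\Gamma$. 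It remains to establish the claim. Since $\Gamma$ is a finite tree with $\ge 2$ vertices it has an end vertex $v$, which is automatically at most linear (degree $1$, no loop). Let $w$ be its weight; we must rule out $w \le -2$. If every end vertex had weight $\le -2$, I would derive a contradiction with negative definiteness of discriminant $1$: the standard argument is that a rational tree all of whose end vertices have weight $\le -2$ and which is not a single vertex either has a vertex of weight $\ge 0$ — impossible for negative definite $I$ since the vector $e_i$ gives $I(e_i,e_i) = (\text{weight}) \ge 0$ — forcing all weights to be $\le -2$, but then $-A(\Gamma)$ is a symmetric matrix with diagonal $\ge 2$ and off-diagonal $\le 0$ that is weakly diagonally dominant and, being a tree, one checks $\det(-A(\Gamma)) \ge 2$ (e.g. by expanding along an end vertex and inducting), contradicting discriminant $1$. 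Hence some end vertex has weight $\ge -1$, and negative definiteness rules out weight $\ge 0$ (the vector $e_i$ would give a nonnegative self-intersection), so that end vertex has weight exactly $-1$ and is at most linear, proving the claim. The delicate point is this determinant estimate for trees with all weights $\le -2$, i.e. the fact that such ``admissible'' trees have discriminant $\ge 2$ (indeed $> 1$); this is classical and can be invoked from the literature cited in the Introduction, but it is the real content of the argument.
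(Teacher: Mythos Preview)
Your treatment of (a) is fine and matches the paper's one-line ``reconstruction'' remark. Your plan to deduce (c) from the numerical characterization in (a) and (b) is also what the paper does. The problem is in your proof of (b).

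You correctly identify the key claim as: \emph{a rational tree $\Gamma$ with $I(\Gamma)$ negative definite of discriminant $1$ and $\ge 2$ vertices contains an at most linear $(-1)$-vertex}. But you then try to prove the stronger statement that some \emph{end} vertex has weight $-1$, and this is false. The linear graph $[[-3,-1,-2]]$ is a rational tree, is negative definite of discriminant $1$ (hence contractible), yet both of its end vertices have weight $\le -2$; the required $(-1)$-vertex is the \emph{linear} middle vertex. So the sentence ``Let $w$ be its weight; we must rule out $w\le -2$'' and the ensuing argument do not establish the claim. Relatedly, your reduction ``forcing all weights to be $\le -2$'' is a non sequitur: ruling out weights $\ge 0$ via negative definiteness only gives weights $\le -1$, and interior $(-1)$-vertices remain possible; thus you never reach the hypothesis of the determinant estimate you invoke.

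To repair the argument you must either (i) search among all at most linear vertices (end \emph{and} degree-$2$) for a $(-1)$-vertex, or (ii) show that a \emph{minimal} negative definite rational tree has discriminant $>1$. Route (ii) is not covered by the ``all weights $\le -2$'' estimate you cite, because a minimal tree can still carry \emph{branching} $(-1)$-vertices while being negative definite (e.g.\ the star with center $-1$ and three leaves of weight $-10$, which has discriminant $700$). This is exactly why the paper does not attempt a self-contained proof of (b) but refers to \cite[Proposition~1.20]{Ru} and \cite[Remark~4.8, formula~(17)]{FKZ-graphs}, where the requisite discriminant recursion (expansion along an end vertex) is carried out in full generality.
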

\begin{proof} To show the first assertion in (a) 
it suffices to look at the reconstruction process 
of blowups 
starting from the graph which consists of 
a single isolated $(-1)$-vertex. 
The second assertion follows from \cite[Section 3, 
Proposition and Theorem]{Hi}. 
We address \cite[Proposition 1.20]{Ru} 
and \cite[Remark 4.8, formula (17)]{FKZ-graphs} 
for statement (b). 
 Statement (c) follows e.g. from (b) due to 
 \cite[Remark 4.8, formula (17)]{FKZ-graphs}; 
 cf. also 
 \cite[Section 3, Proposition]{Hi}.
\end{proof}
The next corollary is immediate.
\begin{corollary}\label{cor:contractibility}
For a contractible weighted graph $\Gamma$  
the following hold.
\begin{enumerate}
\item[(a)] The weight of every vertex $v$ 
of $\Gamma$ is negative.
\item[(b)]  No two $(-1)$-vertices 
of $\Gamma$ are neighbors.
\item[(c)] Every $(-1)$-vertex $v$ of $\Gamma$ 
is at most linear. 
\item[(d)] If $\Gamma$ contains at least two vertices, 
then every 
$(-1)$-vertex of $\Gamma$ is contracted in 
any process of contraction 
of $\Gamma$ to a graph that consists of 
a single $(-1)$-vertex 
with no incident loop. 
\end{enumerate}
\end{corollary}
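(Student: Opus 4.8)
The plan is to deduce everything from Lemma~\ref{lem:contractibility}, which tells us that a contractible weighted graph $\Gamma$ is a tree with only rational vertices whose intersection form $I(\Gamma)=A(\Gamma)$ is negative definite of discriminant~$1$. Statement~(a) is then immediate: evaluating $I(\Gamma)$ on the $i$-th standard basis vector gives the diagonal entry $a_{i,i}$, which is the weight of $v_i$, and negative definiteness forces $a_{i,i}<0$. Statement~(b) is an equally short test-vector computation: if two $(-1)$-vertices $v_i,v_j$ were neighbours, then, $\Gamma$ being a tree, there is exactly one edge between them, so $a_{i,j}=1$ and $I(\Gamma)(e_i+e_j,e_i+e_j)=a_{i,i}+2a_{i,j}+a_{j,j}=-1+2-1=0$, contradicting negative definiteness.

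For~(c) and~(d) I would argue directly on a contraction process, using two elementary facts read off from Definition~\ref{def:blowup}: a blowdown contracting a vertex $w$ changes only the weights of the neighbours of $w$, and it strictly increases each of them; consequently, along any contraction the weight of a vertex that has not yet been contracted is non-decreasing. Moreover every graph occurring in a contraction of $\Gamma$ is again contractible, hence a tree by Lemma~\ref{lem:contractibility}(a). I would prove~(d) first. Let $\Gamma$ have at least two vertices, let $v$ be a $(-1)$-vertex, and suppose, towards a contradiction, that some contraction of $\Gamma$ to a single $(-1)$-vertex never contracts $v$. Then the surviving vertex is $v$, so its final weight is $-1$; by monotonicity its weight equals $-1$ at every step. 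But $\Gamma$ is connected with at least two vertices, so $v$ has a neighbour $u$, and the edge $[v,u]$ can be altered only by a blowdown contracting $v$ or $u$ (a blowdown contracting a third vertex cannot touch it, since in a tree that vertex cannot be adjacent to both $v$ and $u$). As $v$ is never contracted, the edge $[v,u]$ survives until $u$ is contracted; at that moment $u$ is a $(-1)$-vertex adjacent to $v$, so contracting it strictly raises the weight of $v$, a contradiction.

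Statement~(c) then follows from~(d) and the same monotonicity. If $\Gamma$ is a single vertex there is nothing to prove, so assume $\Gamma$ has at least two vertices and let $v$ be a $(-1)$-vertex. By~(d), $v$ is contracted at some step of a chosen contraction of $\Gamma$; since a contraction removes only at most linear $(-1)$-vertices, $v$ is still a $(-1)$-vertex at that step, and, its weight being non-decreasing and initially $-1$, it has remained equal to $-1$ up to that step. Hence no neighbour of $v$ has been contracted before $v$ (such a neighbour would be a $(-1)$-vertex, and its contraction would raise the weight of $v$), so the set of edges incident to $v$ has not changed: $\deg_\Gamma(v)$ equals the degree of $v$ at the step it is contracted, which is $\le 2$; and $\Gamma$ has no loops, being a tree. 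Thus $v$ is at most linear in $\Gamma$.

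The routine parts are the sign computations in~(a),~(b) and the one-vertex case; the only point needing a little care is the persistence of the edge $[v,u]$ in the proof of~(d), which I expect to be the main, though minor, obstacle. Everything else is bookkeeping on weights along a contraction process.
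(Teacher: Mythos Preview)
Your proof is correct. The paper itself gives no argument beyond the word ``immediate'' after Lemma~\ref{lem:contractibility}, so there is no approach to compare against; you have simply supplied the details the authors expected the reader to fill in. Your arguments for (a) and (b) via the negative definiteness of $I(\Gamma)$ are the natural ones, and your treatment of (d) and then (c) by tracking weights and the edge $[v,u]$ along a contraction is clean and accurate---in particular, the point that in a tree no third vertex can be adjacent to both endpoints of an existing edge is exactly what makes the persistence-of-edge argument go through.
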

\subsection{Contraction of a subgraph} 
\label{ss:subgraph}
We use below the following notions.
Let $\Gamma$ be a weighted graph and $\Delta$ 
be a proper \textit{induced subgraph} of $\Gamma$.
The latter means that every edge of 
$\Gamma$ linking two vertices of $\Delta$  
is an edge of $\Delta$.
 \begin{definition}\label{def:contr-sbgrph}
 Let  $\Delta$  be an induced subgraph of $\Gamma$
 and $p\colon\Gamma\to\Gamma'$ be a contraction.
 We say that  $p$
 \textit{contracts} 
$\Delta$
if every vertex of $\Delta$ is contracted under $p$. 
We say that $p$ is a \textit{contraction of} 
$\Delta$ if $p$ contracts every vertex of $\Delta$ 
and no other vertices.
\end{definition}
\begin{remarks}\label{rem:induced-contr}
1.
The fact that $p$ contracts 
$\Delta$ does not imply, 
in general, that $\Delta$ is contractible, 
see e.g. Example \ref{exa:nontip}. On the other hand, 
 if $\Delta$ is contractible, this does not imply, 
 in general, 
 that there exists a contraction 
 $p\colon\Gamma\to\Gamma'$  
 that contracts $\Delta$. 
 For instance, a $(-1)$-vertex $v$ of 
 a minimal graph $\Gamma$ 
survives every contraction $\Gamma\to\Gamma'$.
Indeed, a minimal graph 
admits no contraction, and so $v$ cannot be 
an at most linear vertex.

2. It follows by recursion from Lemma 
\ref{lem:contractibility}(c)
that a contraction of a proper induced subgraph 
of a contractible graph results in a contractible graph. 
\end{remarks}
  \begin{definition}\label{def:image} 
    Consider the blowdown 
    $\delta\colon\Gamma\to\Gamma'$ of
    an at most linear
   vertex $v_0\in\Vert(\Gamma)$. 
    The image $\delta(\Gamma\ominus \{v_0\})$ in 
    $\Gamma'$ is a well defined subgraph isomorphic to 
    $\Gamma\ominus \{v_0\}$
    up to a change of weights. 
    Likewise, there is a well defined image 
    of any subgraph $\Delta$ of $\Gamma\ominus \{v_0\}$.
    For a connected subgraph $\Delta$ of $\Gamma$ such that 
    $v_0\in\Vert(\Delta)$ and $\Vert(\Delta)\neq \{v_0\}$, 
    the image of $\Delta$ in 
    $\Gamma'$ coincides with $\delta(\Delta\ominus \{v_0\})$
     if $v_0$ is a tip of $\Delta$;
    otherwise $\delta(\Delta)=\delta(\Delta\ominus 
    \{v_0\})\cup [\delta(u),\delta(v)]$ 
    where $u$ and $v$ are the vertices of $\Delta$ 
    linked to $v_0$ in $\Delta$
    (it is possible that $u=v$). 
     
     We extend by recursion the notion of image 
     of a subgraph to 
     any birational morphism $\phi\colon\Gamma\to \Gamma'$. 
  \end{definition}

  \begin{remark}
    If $\delta$ is an inner blowdown, 
 then $v_0$ has  in $\Gamma$ degree 2
and exactly two incident edges, say, 
$[u,v_0]$ and $[v_0,v]$
that are replaced by a single edge 
$e'=[\delta(u),\delta(v)]$ of $\Gamma'$,
see Definition \ref{def:blowup}.
If $\delta$ is outer, then $v_0$ 
is a tip of $\Gamma$, and 
 the unique edge 
$[v_0,v]$ of $\Gamma$  at $v_0$ 
disappears under $\delta$. 
  \end{remark}
For a birational morphism
    $\delta\colon\Gamma\to\Gamma'$ and 
    an induced subgraph 
    $\Delta' \subset \Gamma'$ we define 
    the partial preimage
    $\delta^{-1}(\Delta')\subset \Gamma$ 
    and the total preimage
    $\delta^*(\Delta')\subset\Gamma$ as follows.
\begin{definition}\label{def:preimage} 
Let $\Delta' \subset \Gamma'$ 
be an induced subgraph and 
$\delta\colon\Gamma\to\Gamma'$ 
be the blowdown of an at most linear 
vertex $v_0\in\Vert(\Gamma)$. 
If $\delta$ 
is inner (resp. outer) and $v_0$ is linked
to vertices $v_1$ and $v_2$ of $\Gamma$ 
(resp. to a vertex $v_1\in\Vert(\Gamma)$), 
then we define the 
\textit{partial preimage}
\[\delta^{-1}(\Delta'):=
\delta_0^{-1}(\Delta'\setminus[v_1,v_2]) 
\quad (\text{resp.  } \delta^{-1}(\Delta'):=
\delta_0^{-1}(\Delta'))\] 
where
\[\delta_0\colon \Gamma\ominus\{v_0\}\stackrel{\simeq}
{\longrightarrow}\Gamma'\setminus[v_1,v_2] 
\quad (\text{resp.  } 
\delta_0\colon\Gamma\ominus\{v_0\}\stackrel{\simeq}
{\longrightarrow}\Gamma')\]
stands for the induced isomorphism of 
unweighted graphs, cf. Definition \ref{def:image}. 

If $\delta$ is inner (resp. outer) then 
we define the \textit{total preimage}
\[\delta^*(\Delta'):=\begin{cases} 
\delta^{-1}(\Delta')\cup
[v_1,v_0]\cup\{v_0\}\cup [v_0,v_2] & \text{if }
[v_1,v_2]\in\Reb(\Delta'),\\
\delta^{-1}(\Delta')\cup\{v_0\}\cup
[v_1,v_0] & \text{if }   v_1\in \Vert(\Delta') \text{ and }
v_2\notin \Vert(\Delta'),\\
\delta^{-1}(\Delta') & \text{if } 
v_1,v_2\notin \Vert(\Delta'),
\end{cases}\]
resp.
\[\delta^*(\Delta'):=\begin{cases} 
\delta^{-1}(\Delta')\cup [v_1,v_0]\cup\{v_0\}  & \text{if }   
v_1\in\Vert(\Delta'),\\
\delta^{-1}(\Delta') & \text{if }   
v_1\notin\Vert(\Delta').\end{cases}\] 

We extend by recursion the notions of partial and 
total preimages of an induced subgraph
to any birational morphism 
$\phi\colon\Gamma\to\Gamma'$.
\end{definition}
\begin{remark}\label{rem:preimage}
For a vertex $v\in \Vert(\Gamma')$ 
its partial preimage $\delta^{-1}(v)$ 
is a vertex of $\Gamma$ 
that is often denoted by the same letter $v$.
Given an induced subgraph  $\Delta' \subset \Gamma'$, 
the subgraph $\delta^{-1}(\Delta')$ 
is not necessarily induced,
while $\delta^*(\Delta')$ is. So, $\delta^*(\Delta')$ 
is uniquely defined by the set of its vertices, 
that is, by the subset 
$\Vert(\delta^{-1}(\Delta'))\subset \Vert(\Gamma)$ 
plus the vertex $v_0$ in certain cases. 

If $\Delta'$ is connected, then also 
$\delta^*(\Delta')$ is. Moreover, $\delta^*(\Delta')$ 
is a connected component of the subgraph 
$\Gamma\ominus\bigcup_{v\in \Vert(\Gamma')
\setminus\Vert(\Delta')} \{\delta^{-1}(v)\}$. 
\end{remark}
 We will usually apply Definitions~\ref{def:image} 
 and \ref{def:preimage} 
 to induced subgraphs of $\Gamma$.
\begin{sit}\label{convention} 
\textbf{Convention.}  
Given a birational morphism 
$\phi\colon\Gamma\to\Gamma'$
we consider  $\Vert(\Gamma')$ 
as a subset of $\Vert(\Gamma)$ 
identifying $v'\in\Vert(\Gamma')$ 
with $\phi^{-1}(v')\in\Vert(\Gamma)$.
\end{sit}
\begin{remark}\label{rem:homeo}
A path $\gamma$ in $\Gamma$ is 
called \textit{simple} 
if $\gamma$ contains no loop and has 
no self-intersection. 
It is easily seen that, under a  birational morphism 
 $p\colon\Gamma\to\Gamma'$, the partial preimage of 
a simple path $\gamma$ 
 connecting vertices $u',v'\in\Vert(\Gamma')$ 
 is a simple path in $\Gamma$ 
 connecting $p^{-1}(u')$ and $p^{-1}(v')$. 
Moreover, the partial preimage $p^{-1}(\Delta')$ 
of a subgraph $\Delta'\subset\Gamma'$ 
is homeomorphic to $\Delta'$, 
i.e. the associated simplicial $1$-complexes 
viewed as topological spaces
are homeomorphic.  Also, $\Delta'$ and 
the total preimage $p^{*}(\Delta')$ are
homotopically equivalent, 
cf. \cite[Sec. 2.2]{FKZ-graphs}. 
\end{remark}
\begin{definition}\label{def:morphism-equiv}
Given a weighted graph $\Gamma$ we say 
that two birational morphisms 
$p_i\colon\Gamma\to\Gamma_i$, $i=1,2$, 
are \textit{equivalent} if 
there is a (uniquely defined) isomorphism 
of weighted graphs 
$\iota\colon\Gamma_1\to\Gamma_2$ 
such that $p_1^{-1}(\Delta)=p_2^{-1}(\iota(\Delta))$ 
for any subgraph $\Delta\subset\Gamma_1$.
If this holds for $\Gamma_1=\Gamma_2$ 
with $\iota=\id$ then we say that $p_1$ 
and $p_2$ are \textit{equal} 
and we write $p_1=p_2$. 
We say that a diagram of weighted graphs
\[\label{triangular-diagram}
\begin{tikzpicture}[baseline=(current  bounding  box.center)]
\node (c) {};
\draw (0,0.5cm) node (G) {$\Gamma$};
\node[below left of=c] (G1){$\Gamma_1$};
\node[below right of=c] (G2){$\Gamma_2$};
\draw[->] 
(G) edge[->] node[label=left:$p_1$] {} (G1)
(G1)  edge[->] node[label=below:$p_3$] {} (G2)
(G) edge[->]node[label=right:$p_2$] {} (G2);
\end{tikzpicture}
\]
where the $p_i$ are birational morphisms of graphs, 
commutes if $p_3\circ p_1=p_2$. 
In the latter case the diagram
\[\begin{tikzpicture}[baseline=(current  bounding  box.center)]
\node (c) {};
\draw (0,0.5cm) node (G) {$\Gamma$};
\node[below left of=c] (G1){$\Gamma_2$};
\node[below right of=c] (G2){$\Gamma_1$};
\draw[->] 
(G) edge[->] node[label=left:$p_2$] {} (G1)
(G1)  edge[dashed] node[label=below:$p_3^{-1}$] {} (G2)
(G) edge[->]node[label=right:$p_1$] {} (G2);
\end{tikzpicture}\]
is also called commutative. Square commutative 
diagrams of birational morphisms and more complicated 
commutative diagrams are defined in a similar way. 
\end{definition}
\begin{nota} \label{nota:p-v}
Given a birational morphism
$\phi\colon \Gamma\to\Gamma^{\prime}$ 
and a vertex $v$ of $\Gamma$, 
we let $\phi^v$ be the maximal initial subsequence 
of blowdowns 
and isomorphisms in $\phi$ preserving $v$. 
Thus, $\phi^v=\phi$ if and only if $v$ 
is not contracted under $\phi$.
In particular, $\phi^v=\phi$ 
if $v$ is non-rational. 
\end{nota}
\begin{lemma}\label{lem:contraction} $\,$
\begin{enumerate}[(a)]
\item Given a birational morphism 
$\phi\colon\Gamma\to\Gamma'$ 
that is not an isomorphism, 
there is a contraction $p\colon\Gamma\to\Gamma''$ 
equivalent to $\phi$.
\item Consider a composition 
$\Gamma\overset{\phi}{\to}\Gamma_1
\overset{\sigma}{\to}\Gamma_2$ 
of a birational morphism 
$\phi$ and a blowdown $\sigma$ 
of a vertex $v$ that is not adjacent in $\Gamma$ 
to any vertex contracted by $\phi$. 
Then there exists a birational morphism 
$\phi'\colon\Gamma\to\Gamma_2$ 
equal to $\sigma\circ\phi$ and starting 
with the blowdown of $v$.
\item Consider a birational morphism 
$\phi\colon\Gamma\to\Gamma_1$ 
and at most linear $(-1)$-vertex 
$v\in\Vert(\Gamma)$ 
contracted by $\phi$. 
Then there exists a birational morphism 
$\phi'\colon\Gamma\to\Gamma_2$ 
equal to $\phi$ and starting 
with the blowdown $\sigma_v$ of $v$.
\end{enumerate}
\end{lemma}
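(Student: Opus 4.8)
\textbf{Proof plan for Lemma \ref{lem:contraction}.}

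The three parts are established by induction on the number of blowdowns occurring in $\phi$, reducing each time to a single blowdown step, and rely crucially on Lemma \ref{contract-branch}, Lemma \ref{lem:preimage}, and the compatibility of preimages under composition recorded in Definitions \ref{def:preimage} and \ref{def:morphism-equiv}.

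For part (a), the plan is to rewrite an arbitrary birational morphism $\phi\colon\Gamma\to\Gamma'$ as a contraction. Since $\phi$ contains no blowup, it is already a sequence of blowdowns and isomorphisms; the only task is to absorb the isomorphisms. I would proceed by induction on the length of $\phi$: whenever an isomorphism $\iota$ precedes a blowdown $\sigma$ of a vertex $v$, replace the pair $\sigma\circ\iota$ by $\iota'\circ\sigma'$ where $\sigma'$ is the blowdown of $\iota^{-1}(v)$ and $\iota'$ is the induced isomorphism; this commutes past the isomorphism to the end of the sequence, and by Remark \ref{rems}(2) all isomorphisms can be collected into a single one, giving an equivalent contraction $p$ in the sense of Definition \ref{def:morphism-equiv}. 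The equality of preimages $p_1^{-1}(\Delta)=p_2^{-1}(\iota(\Delta))$ follows from the inductive definition of preimage together with Lemma \ref{lem:preimage}.

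For part (c), which is the heart of the lemma, I would argue by induction on the number of blowdowns in $\phi$ preceding the contraction of the $(-1)$-vertex $v$. If $v$ is contracted first there is nothing to do. Otherwise write $\phi=\phi''\circ\sigma_1$ where $\sigma_1\colon\Gamma\to\widetilde\Gamma$ is the first blowdown, of some vertex $w\neq v$, and $v$ is still present (possibly with modified weight) in $\widetilde\Gamma$. There are two cases. If $w$ is not adjacent to $v$ in $\Gamma$, then $\sigma_1$ and the blowdown $\sigma_v$ of $v$ act on disjoint parts of the graph and commute: one may perform $\sigma_v$ first, then $\sigma_1$, obtaining an equivalent birational morphism to which the induction hypothesis applies. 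If $w$ is adjacent to $v$, then after blowing down $v$ in $\Gamma$ (legitimate, since $v$ is a $(-1)$-vertex and, being contracted by a birational morphism, is at most linear by Corollary \ref{cor:contractibility}(c) applied to the relevant contractible subgraph, or directly by Lemma \ref{contract-branch}), the vertex $w$ becomes a $(-1)$-vertex in the resulting graph; one then checks, using Lemma \ref{lem:preimage} and the fact that the total contracted subgraph is a forest (Lemma \ref{lem:contractibility}(a)), that the sequence $\sigma_v$ followed by the blowdown of $w$ followed by the remainder of $\phi$ is a birational morphism equivalent to $\phi$, and again the induction hypothesis finishes the job.

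For part (b), with $\sigma$ the blowdown of a vertex $v$ not adjacent in $\Gamma$ to any vertex contracted by $\phi$, I would observe that $v$ survives throughout $\phi$ with unchanged weight and neighborhood, and that the blowdown $\sigma_v$ of $v$ in $\Gamma$ commutes with every blowdown in $\phi$ precisely because of the non-adjacency hypothesis; repeatedly moving $\sigma_v$ to the front as in part (c), case one, produces a birational morphism $\phi'$ equivalent to $\phi$ and starting with the blowdown of $v$. The main obstacle I anticipate is in part (c), case $w$ adjacent to $v$: one must verify that after commuting the two blowdowns the weights, edges, and in particular the preimages of subgraphs match up exactly, so that the resulting morphism is genuinely equivalent to $\phi$ and not merely birational to the same target; this is where the forest structure of the contracted locus and the edge-image conventions of Definition \ref{def:preimage} must be used carefully.
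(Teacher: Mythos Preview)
Your argument for part (c) contains a genuine error in the adjacent case. You write that after blowing down $v$ first, ``the vertex $w$ becomes a $(-1)$-vertex in the resulting graph.'' But $w$ is the \emph{first} vertex contracted by $\phi$, so $w$ is already a $(-1)$-vertex in $\Gamma$; blowing down its neighbor $v$ raises the weight of $w$ to $0$, not $-1$, and $w$ can no longer be contracted next. The fix is not to repair this step but to observe that the case never occurs: both $v$ and $w$ are $(-1)$-vertices lying in the same connected component of the contracted locus, which is a contractible tree, and Corollary~\ref{cor:contractibility}(b) forbids two adjacent $(-1)$-vertices there. Once you note this, only your Case~1 remains and the induction goes through.

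The paper's route is both simpler and differently organized: it proves (b) \emph{before} (c), and then derives (c) in one line by applying (b) to the truncated morphism $\phi^v$ (the maximal initial segment of $\phi$ that does not yet contract $v$). The point is that $v$ keeps weight $-1$ throughout $\phi^v$, hence is adjacent to no vertex contracted by $\phi^v$, which is exactly the hypothesis of (b). This avoids any case analysis on adjacency and any step-by-step commutation. Your approach of commuting one blowdown at a time is workable once the phantom Case~2 is discarded, but it is strictly more laborious; the paper's use of $\phi^v$ packages all the non-adjacent commutations into a single application of (b). For (a) and (b) your arguments are fine and essentially match the paper's, though the paper states (a) even more tersely: just perform the same blowdowns in the same order, omitting the isomorphisms.
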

\begin{proof}
(a) We obtain the desired contraction 
$p\colon\Gamma\to\Gamma''$
by blowing down the vertices of $\Gamma$ 
blown down by $\phi$ 
in the same order as this is done under $\phi$.

(b) The weight of $v$ is $-1$ both in $\Gamma$ 
and $\Gamma_1$, and the weight of any 
vertex contracted by $\phi$ 
is the same in $\Gamma$ and after the blowdown 
of $v$ in $\Gamma$. 
Thus, we obtain $\phi'$ by contracting $v$ 
first and then contracting all the vertices 
contracted by $\phi$ in the same order. 
The weights of the non-contracted vertices 
are the same under $\sigma\circ\phi$ and $\phi'$. 
Now the assertion follows.

(c)  The vertex $v$ does not change 
its weight under $\phi^v$, hence it is adjacent 
to no vertex contracted by $\phi^v$. 
The claim follows now from (b) applied to 
$\phi^v$ and $\sigma_v$.
\end{proof}
\begin{proposition}\label{prop:morphism-equiv} 
Two birational morphisms 
$\phi_i\colon\Gamma\to\Gamma_i$, $i=1,2$ 
are equivalent if and only if the subsets of 
vertices of $\Gamma$ contracted by 
$\phi_1$ and $\phi_2$ coincide.
\end{proposition}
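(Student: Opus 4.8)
The plan is to prove both implications separately, with the forward direction being essentially immediate from Definition \ref{def:morphism-equiv} and Lemma \ref{lem:preimage}, and the converse direction being the substantive part, proved by induction on the number of vertices contracted.

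For the forward implication, suppose $\phi_1$ and $\phi_2$ are equivalent via an isomorphism $\iota\colon\Gamma_1\to\Gamma_2$ with $p_1^{-1}(\Delta)=p_2^{-1}(\iota(\Delta))$ for all subgraphs $\Delta\subset\Gamma_1$. A vertex $v\in\Vert(\Gamma)$ is contracted by $\phi_1$ precisely when $v$ does not appear in the preimage $\phi_1^{-1}(\Gamma_1)$ of the full target graph, since by Lemma \ref{lem:preimage} the preimage of a subgraph is homeomorphic to it, so the non-contracted vertices are exactly those lying in $\phi_1^{-1}(\Gamma_1)$ (identified with the vertices of $\Gamma_1$ not created by the blowups of $\phi_1^{-1}$). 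Applying this with $\Delta=\Gamma_1$ and the equality $\phi_1^{-1}(\Gamma_1)=\phi_2^{-1}(\iota(\Gamma_1))=\phi_2^{-1}(\Gamma_2)$ shows the two sets of contracted vertices coincide.

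For the converse, write $S\subset\Vert(\Gamma)$ for the common set of contracted vertices and induct on $|S|$. If $S=\emptyset$ then $\phi_1$ and $\phi_2$ are isomorphisms and there is nothing to prove beyond taking $\iota=\phi_2\circ\phi_1^{-1}$. Otherwise, pick any $(-1)$-vertex $v$ of $\Gamma$ contracted by $\phi_1$; by Corollary \ref{cor:contractibility}(d) applied to the connected component of the subgraph contracted by $\phi_1$, such a $v$ exists, and it is also contracted by $\phi_2$ since the contracted sets agree. By Lemma \ref{lem:contraction}(c), both $\phi_1$ and $\phi_2$ are equivalent to birational morphisms $\phi_1'\colon\Gamma\to\Gamma_1'$ and $\phi_2'\colon\Gamma\to\Gamma_2'$ that begin with the blowdown $\sigma_v\colon\Gamma\to\overline\Gamma$ of $v$. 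Writing $\phi_i'=\psi_i\circ\sigma_v$, the morphisms $\psi_1,\psi_2\colon\overline\Gamma\to\Gamma_i'$ contract exactly the image sets, which both equal $\sigma_v(S\setminus\{v\})$ — a set of size $|S|-1$. By the induction hypothesis $\psi_1$ and $\psi_2$ are equivalent, hence so are $\phi_1'$ and $\phi_2'$ (composing the equivalence with $\sigma_v$ and checking preimages agree using Definition \ref{def:preimage}), and therefore so are $\phi_1$ and $\phi_2$ by transitivity of equivalence.

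The main obstacle I anticipate is the bookkeeping in the inductive step: one must verify carefully that "equivalent to a morphism starting with $\sigma_v$" behaves well under the induction, i.e. that if $\psi_1\sim\psi_2$ then $\psi_1\circ\sigma_v\sim\psi_2\circ\sigma_v$ in the precise sense of Definition \ref{def:morphism-equiv} — this requires unwinding how preimages of subgraphs compose through $\sigma_v$, using that $\sigma_v^{-1}$ is the blowup $\delta_v$ and that $\delta_v$ is injective on the relevant subgraphs. It should also be checked that transitivity of the equivalence relation (needed to chain $\phi_1\sim\phi_1'\sim\phi_2'\sim\phi_2$) genuinely holds, which again is a routine verification at the level of preimages of subgraphs. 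None of this is deep, but it is where all the care is needed; the conceptual content is entirely carried by Lemmas \ref{lem:preimage} and \ref{lem:contraction} together with Corollary \ref{cor:contractibility}(d).
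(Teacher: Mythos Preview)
Your approach is essentially the same as the paper's: the `if' direction is proved by induction on the number of contracted vertices, using Lemma~\ref{lem:contraction}(c) to arrange that both morphisms start with the blowdown of a common $(-1)$-vertex, exactly as the paper does.

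Two minor corrections. First, in the forward direction your characterization ``$v$ is contracted by $\phi_1$ iff $v\notin\phi_1^{-1}(\Gamma_1)$'' is not quite right: by Definition~\ref{def:preimage}, a vertex created by an \emph{inner} blowup does lie in the image of the blown-up edge, so inner-contracted vertices do appear in $\phi_1^{-1}(\Gamma_1)$. The clean argument is to take preimages of single vertices: $\phi_1^{-1}(\{w\})=\{w\}$ for each $w\in\Vert(\Gamma_1)$, so the non-contracted set is exactly $\Vert(\Gamma_1)\subset\Vert(\Gamma)$, and equivalence matches these up. Second, Corollary~\ref{cor:contractibility}(d) is not the right citation for the existence of a $(-1)$-vertex in $S$; that corollary says every $(-1)$-vertex of a contractible graph gets contracted, not that one exists. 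The existence is immediate: pass to an equivalent contraction via Lemma~\ref{lem:contraction}(a) and take the first vertex it blows down.
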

\begin{proof}
The `only if' part is clear. To show the `if' part
assume that each vertex contracted by 
$\phi_1$ is contracted by $\phi_2$ and vice versa. 
If there are no contracted vertices in $\Gamma$, 
then both $\phi_1$ and $\phi_2$ 
are isomorphisms and it suffices to 
set $\iota=\phi_2\circ \phi_1^{-1}$, 
see Definition \ref{def:morphism-equiv}.
Let now $v\in\Vert(\Gamma)$ be a $(-1)$-vertex  
contracted by the $\phi_i$.
By Lemma~\ref{lem:contraction}(c), up to 
equivalence we may suppose that both 
$\phi_1$ and $\phi_2$ start with the blowdown 
$\sigma_v$ of $v$.
This reduces the assertion to the one for 
the birational morphisms 
$\phi_i^\prime\colon \sigma_v(\Gamma)\to\Gamma_i$. 
Now the induction by the number 
of contracted vertices ends the proof.
\end{proof}
\begin{lemma}[{\rm see 
\cite[Lemma 2.4(a)]{FKZ-graphs}}]
\label{contract-branch} Consider a contraction 
$p\colon\Gamma\to\Gamma^{\prime}$. $\,$
\begin{itemize}
\item[(a)] 
Assume that a branch $W$ of $\Gamma$ 
at a vertex $v\in\Vert(\Gamma)$
is contracted under 
$p^v\colon\Gamma\to\Gamma^{v}$. 
Then $W$ is contractible and
there is a factorization $p^v=r\circ~\!\!q$
where $q\colon \Gamma\to\Gamma''$ is 
a contraction of $W$ and 
$r\colon\Gamma''\to\Gamma'$
is a birational morphism. 
Furthermore, $W$ is either simple 
(that is, linked to $v$ by a single edge, 
see Definition \ref{def:vertices}) 
or linked to 
 $v$ by exactly two edges. 
In the former case, $p^v$ decreases 
the degree of $v$ by $1$,
while in the latter case
$p^v$ replaces the branch
$W$ by a loop at $v$, so that
the degree of $v$ does not change. 
The latter case cannot happen provided 
$p$ contracts $v$. 
\item[(b)] 
Assume that a branch vertex 
$v\in \Br(\Gamma)$ is blown under $p$ down.
Then $v$ 
is rational and has no incident loop.
Furthermore, at least $\deg(v)-2>0$ 
branches of  
$\Gamma$ at $v$ are simple 
and contracted by $p^v$.
\end{itemize}
\end{lemma}
\begin{proof} (a) The first assertion is 
immediate from Lemma \ref{lem:contraction}(b) and (c).
Consider the contraction 
$q\colon\Gamma\to\Gamma''$
of $W$
induced by $p^v$ and its decomposition 
$q=\sigma_n\circ\ldots\circ\sigma_1$ 
into a sequence of blowdowns. 
Then $\sigma_1^{-1}$  is a blowup
of $\Gamma''$ either  at $v$, or 
at an incident loop at $v$.
So, $W$ is a tree linked to $v$ by 
a single edge in the former 
case and by two edges in the latter case, 
cf. Lemma \ref{lem:contractibility}(a). 

(b) Consider the contraction 
$p^v\colon\Gamma\to\Gamma^v$.
Since $p$ contracts $v$, $p^v(v)$ is at most linear 
$(-1)$-vertex of $\Gamma^v$. In particular,
$v$ is rational  and has no incident loop, 
see Definition \ref{def:vertices}.   

The contraction of a non-simple branch 
of $\Gamma$ at $v$ creates 
a loop of $\Gamma^v$
incident to $p^v(v)$, see (a). 
Since $p^v(v)$ has no incident loop, 
all contracted branches at $v$ are simple. 
Since $p^v(v)$ is at most linear, 
there are at least $\deg(v)-2$ 
such branches. 
\end{proof}
\begin{lemma}\label{lem:contract-segment}$\,$
\begin{itemize}
\item[(a)]
Let $p\colon\Gamma\to\Gamma'$ be a contraction
and $L$ be an extremal linear segment of $\Gamma$
that is not contracted under $p$. Then the image 
$p(L)$ is contained in an extremal linear 
segment of $\Gamma'$
and contains a tip of $\Gamma'$.
\item[(b)]
Let $\Gamma$ be a contractible graph. 
Then  $\Gamma$ contains a contractible 
extremal linear segment. 
\end{itemize}
\end{lemma}
\begin{proof} (a) The assertion is true if $p$ is a blowdown. 
Now the general case follows by induction 
on the number of blowdowns in a decomposition of $p$.

(b) 
The assertion is evident if $\Gamma$ is linear. Otherwise 
$\Gamma$ contains branch points. 
Let $\Br(\Gamma)=\{v_1,\ldots,v_k\}$. 
The number $\nu(\Gamma)=\sum_{i=1}^k (\deg(v_i)-2)$ is called 
the \textit{total branching number} of $\Gamma$, 
see \cite[Sec. 2.1]{FKZ-graphs}. Clearly,
$\nu(\Gamma)=0$ if and only if $\Gamma$ is linear.
On each step of the contraction the total branching number 
can only decrease. 
 At the first moment when $\nu(\Gamma)$ actually drops,
 an extremal linear branch of $\Gamma$ has been contracted. 
\end{proof}
\subsection{Relatively minimal diagrams} 
Recall the following well known fact.
\begin{lemma}\label{lem:bir-NC-surfaces}
Any birational transformation of NC-pairs 
$\Phi\colon (X_1,D_1)\dasharrow (X_2,D_2)$ 
fits in a commutative diagram
\begin{equation}\label{diagr:bir-pairs}
\begin{tikzpicture}[baseline=(current  
bounding  box.center),scale=1.3]
\node (c) {};
\node[] (G) at (1,1) {$(\widetilde X,\widetilde D)$};
\node[] (G1) at (0,0) {$(X_1,D_1)$};
\node[] (G2) at (2,0) {$(X_2,D_2)$};
\draw[->] 
(G) edge[->] node[label=left:$P_1$] {} (G1)
(G1) edge[dashed] node[label=below:$\Phi$] {} (G2)
(G) edge[->]node[label=right:$P_2$] {} (G2);
\end{tikzpicture}
\end{equation}
\noindent where every $P_i$ is 
a composition of an isomorphism and blowdowns 
of smooth rational $(-1)$-components 
of boundary divisors
and  all the intermediate pairs
are NC-pairs.
\end{lemma}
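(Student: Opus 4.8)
The plan is to resolve the indeterminacy of $\Phi$ by blowing up only points lying over the boundary, carrying along the reduced total transform of $D_1$, and then to use the symmetry between the two sides to see that the resulting divisor is also the reduced total transform of $D_2$.

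First recall that a birational map between normal projective surfaces is defined outside a finite set, and that here $\mathrm{Indet}(\Phi)\subseteq\supp D_1$ because $\Phi$ restricts to the isomorphism $X_1\setminus\supp D_1\isom X_2\setminus\supp D_2$; in particular each indeterminacy point of $\Phi$, and more generally of any birational map obtained from $\Phi$ by a blowup, lies in the smooth locus of its source. By elimination of indeterminacies in dimension two there is a finite sequence of point blowups
\[
\rho_1\colon\widetilde X=X^{(N)}\longrightarrow X^{(N-1)}\longrightarrow\cdots\longrightarrow X^{(0)}=X_1
\]
such that $\rho_2\coloneq\Phi\circ\rho_1\colon\widetilde X\to X_2$ is a morphism. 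Since the first center lies on $\supp D_1$, each later center lies over $\mathrm{Indet}(\Phi)\subseteq\supp D_1$, and the reduced preimage of $D_1$ contains the entire reduced fibre over every point of $\supp D_1$, an easy induction shows that each blowup is centered at a smooth point lying on the reduced preimage $D^{(k)}$ of $D_1$. Using the elementary fact that blowing up a point lying on a normal crossing divisor of a normal surface (such a point being automatically smooth) and adjoining the exceptional curve again yields a normal crossing divisor --- which one checks separately when the point is a smooth point of a single component and when it is a node --- we conclude by induction that $(X^{(k)},D^{(k)})$ is an NC-pair for every $k$. Hence $(\widetilde X,\widetilde D)$ with $\widetilde D\coloneq D^{(N)}$ is an NC-pair, and $P_1\coloneq\rho_1$ is a composition of blowdowns of smooth rational $(-1)$-curves --- the successively created exceptional curves --- each of which is a component of the boundary, all intermediate pairs being NC-pairs.

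It remains to factor $P_2\coloneq\rho_2$. As $\rho_1$ and $\Phi$ both restrict to the identity on $Y\coloneq X_1\setminus\supp D_1$, so does $\rho_2$; therefore $\rho_2^{-1}(Y)=\rho_1^{-1}(Y)=\widetilde X\setminus\widetilde D$, whence $\rho_2^{-1}(\supp D_2)=\widetilde D$ as closed subsets. Thus $\widetilde D$ is also the reduced preimage of $D_2$ under $\rho_2$, and every curve contracted by $\rho_2$, being sent into $\supp D_2\subseteq X_2^{\mathrm{sm}}$, is a component of $\widetilde D$. Since the exceptional locus of $\rho_2$ is mapped into the smooth locus of $X_2$, Castelnuovo's contraction criterion factors $\rho_2$, up to an isomorphism, as a sequence of blowdowns of $(-1)$-curves; at every stage the exceptional locus of the remaining morphism still lies over $\supp D_2$, so the contracted $(-1)$-curve is a component of the current reduced preimage of $D_2$. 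Read backwards, this factorization presents each intermediate surface as a blowup of $X_2$ at a point of the current boundary, so the same induction as above shows all intermediate pairs are NC-pairs. Finally the diagram \eqref{diagr:bir-pairs} commutes because $\rho_2=\Phi\circ\rho_1$ by construction.

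The point demanding care is the bookkeeping ensuring that the resolution of $\Phi$ proceeds by blowing up only points of the successive reduced boundaries --- which preserves the normal crossing property and makes $P_1$ contract boundary components exclusively --- together with the identification $\widetilde D=\rho_1^{-1}(\supp D_1)=\rho_2^{-1}(\supp D_2)$, which is precisely what forces $P_2$ to contract boundary components as well. The remaining ingredients are classical surface birational geometry: finiteness of the indeterminacy locus on a normal surface, elimination of indeterminacies, and Castelnuovo's criterion.
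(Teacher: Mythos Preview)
Your proof is correct and follows essentially the same route as the paper's: dominate both pairs by a common $(\widetilde X,\widetilde D)$ obtained by resolving $\Phi$, then factor each side as a composition of blowdowns of boundary $(-1)$-curves, checking along the way that all intermediate pairs stay NC. The paper simply cites the standard factorization of birational maps between smooth surfaces (Beauville, Mumford) and handles the possible singularities of the $X_i$ by first passing to a simultaneous resolution; you instead work directly on the normal surfaces, observing that every center and every contracted curve lies in the smooth locus because $\Phi$ is biregular off the boundary. Your treatment is more explicit about why the blowup centers lie on the successive boundaries and why $\widetilde D=\rho_2^{-1}(\supp D_2)$, points the paper leaves implicit.
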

\begin{proof}
Any birational transformation of 
smooth projective surfaces $\Phi\colon X_1\to X_2$ 
fits in a commutative diagram
\[\begin{tikzpicture}
\node (c) {};
\draw (0,0.5cm) node (G) {$\widetilde X$};
\node[below left of=c] (G1){$X_1$};
\node[below right of=c] (G2){$X_2$};
\draw[->] 
(G) edge[->] node[label=left:$P_1$] {} (G1)
(G1) edge[dashed] node[label=below:$\Phi$] {} (G2)
(G) edge[->]node[label=right:$P_2$] {} (G2);
\end{tikzpicture}
\]
where $P_i$ is a composition of an isomorphism 
and blowdowns of smooth rational $(-1)$-curves, see e.g. 
\cite[Corollaries (8.10)-(8.11)]{Mu} or 
\cite[Corollary II.12]{Be}. This can be applied as well 
in our setting after a simultaneous resolution of 
singularities of $X_1$ and $X_2$. Indeed, 
$\Phi|_{X_1\setminus \supp D_1}\colon 
X_1\setminus \supp D_1\to 
X_2\setminus \supp D_2$ is  a biregular isomorphism 
and $X_i$ is smooth near $D_i$, 
see Definitions \ref{def:NC-pairs} and \ref{def:bir-NC-pairs}.

Note that $P_i^{-1}$ can be 
written as a composition of an isomorphism 
and a sequence of blowups with smooth centers 
located at points of $\supp D_i$ and 
infinitesimally near points.
A blowup of an NC-pair $( X,D)$ in a point of 
$\supp D$ yields an NC-pair. 
Hence all the intermediate  pairs in the decomposition 
\[P_i\colon (\widetilde X,\widetilde D)
\stackrel{\sigma_{i,n_i}}{\longrightarrow} 
(X_{i, n_i-1},D_{i, n_i-1})\longrightarrow\cdots\longrightarrow 
(X_{i,2},D_{i,2})\stackrel{\sigma_{i,2}}{\longrightarrow} 
(X_{i,1},D_{i,1})\simeq (X_{i},D_{i})\] 
are NC-pairs. 
\end{proof}
To formulate an analog of this lemma for 
birational transformations of weighted 
graphs one needs 
to explain the meaning of 
the corresponding commutative diagram. 
\begin{definition}\label{def:fit-diagram}
Consider a diagram of weighted graphs
\begin{equation}\label{triangular diagram}
\begin{tikzpicture}[baseline=(current  
bounding  box.center)]
\node (c) {};
\draw (0,0.5cm) node (G) {$\Gamma$};
\node[below left of=c] (G1){$\Gamma_1$};
\node[below right of=c] (G2){$\Gamma_2$};
\draw[->] 
(G) edge[->] node[label=left:$p_1$] {} (G1)
(G) edge[->]node[label=right:$p_2$] {} (G2);
\end{tikzpicture}
\end{equation}
where the $p_i$ are birational morphisms of graphs.
Consider  also a birational transformation 
$\phi\colon\Gamma_1\dasharrow\Gamma_2$
 written as
\begin{equation}\label{eq:decomposition} 
\phi\colon \Gamma_1=\Delta_1\stackrel{\psi_1}
{\dasharrow}\Delta_2\stackrel{\psi_2}{\dasharrow}
\cdots\stackrel{\psi_{n-2}}{\dasharrow}\Delta_{n-1}
\stackrel{\psi_{n-1}}{\dasharrow}\Delta_{n}=
\Gamma_2\end{equation}
where the $\Delta_i$ are weighted graphs
and exactly one of the 
$\psi_i$ and $\psi_{i+1}$ is a birational morphism 
and the other one is the inverse of a birational morphism. 
We say that $\phi$ \textit{fits in} 
\eqref{triangular diagram} 
if 
\eqref{eq:decomposition} is included 
in a sequence of commutative diagrams 
\begin{equation}\label{triangular diagram-1}
\begin{tikzpicture}[baseline=(current  
bounding  box.center)]
\node (c) {};
\draw (0,0.5cm) node (G) {$\Gamma$};
\node[below left of=c] (G1){$\Delta_i$};
\node[below right of=c] (G2){$\Delta_{i+1}$};
\draw[->] 
(G) edge[->] node[label=left:$\eta_i$] {} (G1)
(G1) edge[dashed] node[label=below:$\psi_i$] {} (G2)
(G) edge[->]node[label=right:$\eta_{i+1}$] {} (G2);
\end{tikzpicture}
\end{equation}
where $\eta_1=p_1$ and $\eta_n=p_2$; 
cf. Definition \ref{def:morphism-equiv}. 
\end{definition}
\begin{remark}\label{rem:bir-equiv}
The graphs $\Gamma_1$ and $\Gamma_2$
 in  diagram \eqref{triangular diagram} are 
 birationally equivalent via the birational 
 transformation $\phi'=p_2\circ p_1^{-1}
 \colon\Gamma_1\dasharrow\Gamma_2$. 
 Thus, starting with a diagram 
 \eqref{triangular diagram}  
 we come to a birational transformation $\phi'$. 
 In the next proposition we show that, conversely, 
 given a birational transformation of weighted graphs 
 one can construct a corresponding diagram 
 \eqref{triangular diagram}.
\end{remark}
\begin{proposition}[{\rm see 
\cite[Remark A1(1)]{FZ}}]\label{prop:domination}
Let $\phi\colon\Gamma_1\dasharrow\Gamma_2$ 
be a birational transformation of weighted graphs. 
Then $\phi$ fits in a diagram 
\eqref{triangular diagram} for some $\Gamma,p_1,p_2$.
\end{proposition}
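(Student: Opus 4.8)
The plan is to induct on the length of a decomposition of $\phi$ into blowups, blowdowns, and isomorphisms, building up the dominating graph $\Gamma$ step by step. By Remark \ref{rems}.2 we may assume $\phi$ is written as an alternating word $\phi=\psi_{n-1}\circ\cdots\circ\psi_1$ in which each $\psi_i$ is either a blowdown or the inverse of a blowdown (i.e. a blowup), and any isomorphisms have been absorbed by renaming the intermediate graphs $\Delta_i$. For the base case $n=1$ the transformation $\phi$ is already a birational morphism or the inverse of one, so we may take $\Gamma=\Gamma_1$ with $p_1=\id$, $p_2=\phi$ if $\phi$ is a morphism, or $\Gamma=\Gamma_2$ with $p_1=\phi^{-1}$, $p_2=\id$ otherwise.

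For the inductive step, suppose a dominating diagram has been constructed for the prefix $\phi'=\psi_{n-2}\circ\cdots\circ\psi_1\colon\Gamma_1\dasharrow\Delta_{n-1}$, realized by birational morphisms $q_1\colon\Gamma\to\Gamma_1$ and $q_2\colon\Gamma\to\Delta_{n-1}$ together with the sequence of commutative triangles over $\Gamma$. We must extend this over the last step $\psi_{n-1}\colon\Delta_{n-1}\dasharrow\Gamma_2$. If $\psi_{n-1}$ is a blowup $\Delta_{n-1}\dasharrow\Gamma_2$, then $\psi_{n-1}^{-1}\colon\Gamma_2\to\Delta_{n-1}$ is a blowdown, and we simply compose: $q_2'=\psi_{n-1}\circ q_2$ is not a morphism, but the point is that $\Gamma_2$ already sits below $\Delta_{n-1}$, so we keep $\Gamma$ as is, append the triangle with apex $\Gamma$, legs $\eta_{n-1}=q_2$ to $\Delta_{n-1}$ and $\eta_n\colon\Gamma\to\Gamma_2$ obtained by composing $q_2$ with the blowup — one must check $\eta_n$ is again a birational morphism, which holds because $\Gamma\to\Delta_{n-1}$ followed by the blowup of a vertex (or edge) of $\Delta_{n-1}$ is again a composition of isomorphisms and blowups, hence the inverse is a birational morphism; equivalently $\Gamma$ itself may need to be blown up at the preimage to keep everything a morphism. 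If instead $\psi_{n-1}$ is a blowdown $\Delta_{n-1}\to\Gamma_2$, then $\eta_n:=\psi_{n-1}\circ q_2\colon\Gamma\to\Gamma_2$ is a composition of birational morphisms, hence a birational morphism by Remark \ref{rem:composition}, and the required commutative triangle over $\Gamma$ is immediate. In both cases, the earlier triangles are untouched, so the enlarged sequence still dominates $\phi$ in the sense of Definition \ref{def:fit-diagram}.

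The one subtlety — and the step I expect to be the real obstacle — is the blowup case: appending a blowup of $\Delta_{n-1}$ can force the apex $\Gamma$ to grow, since the center of the blowup (a vertex or edge of $\Delta_{n-1}$) has a well-defined preimage in $\Gamma$ only after possibly resolving, and one needs to know that blowing up $\Gamma$ at the appropriate preimage vertex or edge produces a graph that still maps by a birational morphism onto $\Gamma_1$ (so that the left-hand leg $p_1$ can be updated consistently with all previous triangles). This is exactly the graph-theoretic shadow of the simultaneous-resolution argument in Lemma \ref{lem:bir-NC-surfaces}: the preimage of a subgraph under a birational morphism is well-behaved by Lemma \ref{lem:preimage}, and Lemma \ref{lem:contraction} lets one reorder blowdowns so that the new blowup commutes past the existing morphism $\Gamma\to\Gamma_1$. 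Concretely, I would blow up $\Gamma$ at $q_2^{-1}$ of the center to get $\widehat\Gamma$, check via Lemma \ref{lem:preimage} that $q_1$ lifts to a birational morphism $\widehat\Gamma\to\Gamma_1'$ (where $\Gamma_1'$ is $\Gamma_1$ possibly blown up, or $\Gamma_1$ itself if the center already lies over non-contracted vertices), and replace $\Gamma$ by $\widehat\Gamma$ throughout, re-deriving the earlier triangles by functoriality of preimage. Iterating over all $n$ steps yields the desired diagram \eqref{triangular diagram} dominating $\phi$.
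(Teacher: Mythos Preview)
Your inductive strategy on the length of the word is sound in outline and close in spirit to the paper's approach; both reduce everything to a single ``square-filling'' step. But your handling of the blowup case---which you rightly flag as the crux---does not go through as written, and this is a genuine gap, not just a matter of polish.

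First, the initial treatment of the blowup case is simply wrong. If $\psi_{n-1}$ is a blowup then $\Gamma_2$ sits \emph{above} $\Delta_{n-1}$, not below, so there is no hope that $\eta_n=\psi_{n-1}\circ q_2$ is a birational morphism; the sentence ``$\Gamma\to\Delta_{n-1}$ followed by the blowup \ldots\ is again a composition of isomorphisms and blowups, hence the inverse is a birational morphism'' does not give you a morphism $\Gamma\to\Gamma_2$. You recognize this and retreat to enlarging $\Gamma$, which is the right move, but the retreat is incomplete. The instruction ``blow up $\Gamma$ at $q_2^{-1}$ of the center'' is not well-defined when $\psi_{n-1}$ is an inner blowup at an edge $e$: by Lemma~\ref{lem:preimage} the preimage $q_2^{-1}(e)$ is a path in $\Gamma$, possibly of length greater than one, and you have not said which edge of that path to blow up, nor verified that the resulting $\widehat\Gamma$ admits a \emph{morphism} to $\Gamma_2$ (one must check that the sequence of blowdowns comprising $q_2$ is still legal in $\widehat\Gamma$ and lands on $\Gamma_2$). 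The parenthetical about $\Gamma_1'$ being ``$\Gamma_1$ possibly blown up'' is a red herring: once you have any $\widehat\Gamma\to\Gamma$, the composite $\widehat\Gamma\to\Gamma\to\Gamma_1$ is automatically a morphism by Remark~\ref{rem:composition}, so the left leg never needs adjusting.

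What is actually needed is the elementary square-filling: given morphisms $\Gamma\to\Delta_{n-1}$ and $\Gamma_2\to\Delta_{n-1}$, produce $\widehat\Gamma$ with morphisms to $\Gamma$ and to $\Gamma_2$ making the square commute. The paper isolates exactly this as its $n=2$ case: for a single blowdown on each side one builds $\widehat\Gamma$ by performing both blowups simultaneously, and for longer compositions one tiles a lattice parallelogram of such elementary squares (see Figure~\ref{fig:romb}). Your Lemma~\ref{lem:preimage}/Lemma~\ref{lem:contraction} references gesture toward this but do not carry it out; supplying that construction is the missing content of your proof.
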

\begin{proof} The assertion is evidently true if 
$\phi$ is an isomorphism. 
Assume this is not the case. 
Up to an automorphism of $\Gamma_2$, 
we can write $\phi$ as in \eqref{eq:decomposition}, 
where $\psi_i$ for $1\le i\le n-2$ is either 
a product of blowdowns following by 
a product $\psi_{i+1}$ of blowups, 
or such a product in the reverse order, 
see Remark \ref{rems}.1. 
Let us proceed by recursion. 
For $n=1$ our assertion is obvious. 

Assume now that $n=2$ and 
$\phi=\sigma_2^{-1}\sigma_1$ 
where the $\sigma_i:\Gamma_i\to\Gamma_0$ 
are blowdowns.
If both $\sigma_1^{-1}$ and $\sigma_2^{-1}$ 
are inner blowups of the same edge of $\Gamma_0$, 
then the graph $\Gamma=\Gamma_1=\Gamma_2$ 
dominates both $\Gamma_1$ and $\Gamma_2$ 
via isomorphisms. 
Otherwise, applying simultaneously both 
of them one transforms $\Gamma_0$ 
into a graph $\Gamma$ 
with two distinct vertices $v_1$ and $v_2$ 
such that $\Gamma$ dominates $\Gamma_1$ 
(resp., $\Gamma_2$) 
via the contraction $p_1$ of $v_2$ (resp., $p_2$ 
of $v_1$). 
This produces a desired diagram \eqref{triangular diagram}.

\begin{figure}[h]
\begin{tabular}{l@{\hspace{30pt}}l@{\hspace{30pt}}l}
    \pgfdeclarelayer{background}
		\pgfsetlayers{background,main}
\begin{diagram}[notextflow,height=2em]
&& & & & & \Gamma_{0,2,3}=\Gamma & & & & &\\
&& & & & \ldTo_{} &                                           & \rdTo_{} & & & &\\
&& & &  \Gamma_{0,2,2} & &  & \rDashto & \Gamma_{0,1,3}  & & & \\
&& & \ldTo_{}  & & \rdTo_{}  & & \ldTo_{}   & & \rdTo_{}     &  &  \\
&&  \Gamma_{0,2,1} & & & \rDashto  &\Gamma_{0,1,2}    & & & \rDashto & \Gamma_{2,3} =\Gamma_{2}  &  \\
& \ldTo_{}  & & \rdTo_{}  & & \ldTo_{}  &  & \rdTo_{}     &  & \ldTo_{\sigma_{2,3}} & &  & \\
\Gamma_1 =\Gamma_{1,2} &    & & \rDashto  & \Gamma_{0,1,1}   &    &\rDashto  &  & \Gamma_{2,2}  &   &     \\
      & \rdTo_{\sigma_{1,2}} & &  \ldTo_{}  &  &\rdTo_{} &     & \ldTo_{\sigma_{2,2}}  & & &  \\
      && \Gamma_{1,1}  &  & & \rDashto &  \Gamma_{2,1} &  &  &  & &     \\
    &&  & \rdTo_{\sigma_{1,1}}  &  & \ldTo_{\sigma_{2,1}} &  &  &  &  &  &  &  &\\
      &&    &   & \Gamma_{0}    &      &  &    &  & & &  \\
      \end{diagram}
      \end{tabular}
  \caption{Construction of diagram 
  \eqref{triangular diagram} for 
  $m_1=2$ and $m_2=3$.}\label{fig:romb}
\end{figure}
Let further $n=2$ and $\phi=\psi_2^{-1}\psi_1$, 
where this time the $\psi_i$ stand 
for birational morphisms. We can write
\[\psi_i=\sigma_{i,1}\cdots\sigma_{i,m_i}\colon \Gamma_i
=\Gamma_{i,m_i}\to\Gamma_0=\Gamma_{i,0}\] 
where 
$\sigma_{i,j}\colon \Gamma_{i,j}\to\Gamma_{i,j-1}$ 
is a blowdown, $i=1,2$. 
Applying the preceding case to the composition 
$\sigma_{2,1}^{-1}\sigma_{1,1}\colon \Gamma_{1,1}
\dasharrow\Gamma_{2,1}$ 
we can find a weighted graph $\Gamma_{0,1,1}$ 
which dominates both $\Gamma_{1,1}$ and $\Gamma_{2,1}$ 
making the square diagram commutative, 
see Figure \ref{fig:romb}. 
The same procedure applied to the pairs 
$\{\Gamma_{2,2},\,\Gamma_{0,1,1}\}$ and 
$\{\Gamma_{0,1,1},\,\Gamma_{1,2}\}$ 
yields two new graphs $\Gamma_{0,1,2}$ 
and $\Gamma_{0,2,1}$ which dominate 
the corresponding pairs. 
Continuing in this way we fill in a commutative
diagram  in the form of a lattice parallelogram 
consisting of $(m_1+1)(m_2+1)$
 weighted graphs and their morphisms. 
 This parallelogram has the pairs of opposite 
 vertices $\{\Gamma_1,\,\Gamma_2\}$ and 
 $\{\Gamma_0,\,, \Gamma\}$, 
 where $\Gamma=\Gamma_{0,m_1,m_2}$ 
 dominates both $\Gamma_1$ and $\Gamma_2$ 
 via sequences of blowdowns inducing $\phi$, 
 see Figure \ref{fig:romb} for $m_1=2$ and $m_2=3$. 
 This yields the assertion for $n=2$. 

Proceeding by induction on $n$ we assume that 
$n\ge 3$ and we choose $i\ge 2$ such that 
$\psi_i^{-1}$ and $\psi_{i-1}$ in 
\eqref{eq:decomposition} are morphisms. 
By the preceding,  the product $\psi_i\psi_{i-1}$ 
can be replaced by $\rho_i\rho_{i-1}$ where this time
$\rho_i$ and $\rho_{i-1}^{-1}$ are morphisms. 
Replacing now $\psi_{i+1}\rho_i$ by 
$\psi'_{i+1}$ provided $i\le n-1$ and $\rho_{i-1}\psi_{i-2}$ 
by $\psi'_ {i-2}$ provided $i\ge 3$
we  transform \eqref{eq:decomposition} into 
a shorter decomposition of the same type.
This gives the inductive step. 

The graph $\Gamma$ in diagram \eqref{triangular diagram} 
constructed in the proof dominates every pair of 
intermediate graphs obtained under our procedure. 
In particular, $\phi$ fits in this diagram, 
see Definition \ref{def:fit-diagram}.
\end{proof}
\begin{definition} A diagram \eqref{triangular diagram} 
with two minimal weighted graphs $\Gamma_1$
and $\Gamma_2$ is called {\em relatively minimal} 
if no ($-1$)-vertex of $\Gamma$ is contracted 
in both $\Gamma_1$ and $\Gamma_2$. 
\end{definition}
\begin{remark} 
 Given a birational map 
 $\phi\colon\Gamma_1\dasharrow\Gamma_2$ 
fitting in a relatively minimal diagram
 \eqref{triangular diagram}, 
the graph $\Gamma$ is not uniquely defined, 
in general, see Figure \ref{fig:2-graphs} 
for a simple example;
cf. also \cite[Example 2.6]{FKZ-graphs}.
\begin{figure}[h]
\begin{tabular}{l@{\hspace{30pt}}l@{\hspace{30pt}}l}
    \pgfdeclarelayer{background}
		\pgfsetlayers{background,main}
    \begin{tikzpicture}[scale=1]
      
       \node[] (G) at (0,2.4) {$\Gamma$};
       \node[] (-1) at (0.5,2.7) {\footnotesize $-1$};
       \node[vertex] (a) at (0.5,2.4) {};
       \node[] (1) at (1.5,2.7) {\footnotesize $1$};
        \node[vertex] (b) at (1.5,2.4) {};
        \node[] (G1) at (-1, 1.4) {$\Gamma_1$};
        \node[] (-1) at (-0.5,1.7) {\footnotesize $-1$};
        \node[vertex] (a1) at (-0.5,1.4) {};
        \node[] (1) at (0.5,1.7) {\footnotesize $1$};
        \node[vertex] (b1) at (0.5,1.4) {};
        \node[] (G2) at (3, 1.4) {$\Gamma_2$};
        \node[] (-1) at (1.5,1.7) {\footnotesize $-1$};
        \node[vertex] (a2) at (1.5,1.4) {};
         \node[] (1) at (2.5,1.7) {\footnotesize $1$};
        \node[vertex] (b2) at (2.5,1.4) {};
        \node[] (G0) at (0.5, 0.4) {$\Gamma_0$};
         \node[] (2) at (1,0.7) {\footnotesize $2$};
        \node[vertex] (b0) at (1,0.4) {};
        
\node[] (tG) at (4.5,2.4) {$\Gamma'$};
       \node[] (-1) at (5,2.7) {\footnotesize $-1$};
       \node[vertex] (A) at (5,2.4) {};
       \node[] (-1) at (7,2.7) {\footnotesize $-1$};
        \node[vertex] (B) at (7,2.4) {};
        \node[vertex] (A) at (5,2.4) {};
       \node[] (0) at (6,2.7) {\footnotesize $0$};
        \node[vertex] (C) at (6,2.4) {};
        \node[] (tG1) at (4, 1.4) {$\Gamma_1$};
        \node[] (-1) at (4.5,1.7) {\footnotesize $-1$};
        \node[vertex] (a11) at (4.5,1.4) {};
        \node[] (1) at (5.5,1.7) {\footnotesize $1$};
        \node[vertex] (b11) at (5.5,1.4) {};
        \node[] (tG2) at (8, 1.4) {$\Gamma_2$};
        \node[] (-1) at (6.5,1.7) {\footnotesize $-1$};
        \node[vertex] (a21) at (6.5,1.4) {};
         \node[] (1) at (7.5,1.7) {\footnotesize $1$};
        \node[vertex] (b21) at (7.5,1.4) {};
        \node[] (tG0) at (5.5, 0.4) {$\Gamma'_0$};
         \node[] (2) at (6,0.7) {\footnotesize $2$};
        \node[vertex] (b01) at (6,0.4) {};
     
      \draw[edge] 
      (a)edge(b) (a1)edge(b1) (a2)edge(b2) 
      (A)edge(C) (C)edge(B) (a11)edge(b11) (a21)edge(b21);  
     \draw[->](0.8,2.2) -- (0,1.6); 
      \node[] (p1) at (0.3, 2.0) {\footnotesize $p_1$};
      \draw[->](1,2.2) -- (1,1); 
       \node[] (s1) at (0.3, 0.8) {\footnotesize $\sigma_1$};
      \node[] (s2) at (1.7, 0.8) {\footnotesize $\sigma_2$};
      \node[] (p2) at (1.7, 2) {\footnotesize $p_2$};
      \draw[->](1.2,2.2) -- (2,1.6);
      \draw[->](0,1.2) -- (0.8,0.6);
      \draw[->](2,1.2) -- (1.2,0.6);

       \draw[->](5.8,2.2) -- (5,1.6); 
      \node[] (p11) at (5.2, 2) {\footnotesize $p'_1$};
      \draw[->](6,2.2) -- (6,1); 
      \node[] (s11) at (5.3, 0.8) {\footnotesize $\sigma_1$};
      \node[] (s21) at (6.8, 0.8) {\footnotesize $\sigma_2$};
      \node[] (p21) at (6.8, 2) {\footnotesize $p'_2$};
      \draw[->](6.2,2.2) -- (7,1.6);
      \draw[->](5,1.2) -- (5.8,0.6);
      \draw[->](7,1.2) -- (6.2,0.6);

         \end{tikzpicture}
\end{tabular}
  \caption{Two different dominations
 of the same weighted graphs.}
  \label{fig:2-graphs}
\end{figure}

\noindent In both diagrams, the linear graphs $\Gamma_1$ 
and $\Gamma_2$ are isomorphic, 
the $\sigma_i$ are blowdowns of $(-1)$-vertices 
and $\phi=\sigma_2^{-1}\circ\sigma_1\colon\Gamma_1
\dasharrow\Gamma_2$ 
is a birational transformation.
In the left diagram 
$\Gamma\simeq \Gamma_1\simeq \Gamma_2$,
while in the right diagram $p_1'$ and $p_2'$ 
 are blowdowns of 
distinct $(-1)$-vertices of $\Gamma'$ and  
$\Gamma_1\simeq\Gamma_2\not\simeq\Gamma$.
The vertical arrows in both diagrams correspond 
to the left arrow in  diagram 
\eqref{triangular diagram-1} with $i=2$.
\end{remark}
According to the following lemma,  
 for any pair of birationally equivalent minimal 
 weighted graphs $\Gamma_1$ and $\Gamma_2$ 
 there is a weighted graph $\Gamma$ and birational 
 morphisms $p_i\colon\Gamma\to\Gamma_i$ 
 that form a relatively minimal diagram \eqref{triangular diagram}.  
\begin{lemma}[{\rm cf. \cite[Remark A1(1)]{FZ}}]\label{lem:rel-min}
Given a diagram \eqref{triangular diagram} 
with minimal weighted graphs $\Gamma_1$ and $\Gamma_2$ 
there exist birational morphisms 
\[\Gamma\stackrel{\psi}{\longrightarrow} 
\Gamma'\stackrel{p'_i}{\longrightarrow} \Gamma_i,\quad i=1,2\] 
such that $p_i$ is equivalent to $p'_i \circ \psi$ for $i=1,2$ 
and $\Gamma', p'_i, \Gamma_i$, $i=1,2$ fit in
a relatively minimal diagram \eqref{triangular diagram}.
\end{lemma}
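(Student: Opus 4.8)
The plan is to start with the diagram \eqref{triangular diagram} produced by Proposition \ref{prop:domination} (or simply a given one), and to ``prune'' it from the top: if some $(-1)$-vertex $v$ of $\Gamma$ is contracted by \emph{both} $p_1$ and $p_2$, I would like to blow it down once and argue that the resulting graph, together with the induced morphisms to $\Gamma_1$ and $\Gamma_2$, still forms a diagram of the same type but with fewer vertices. Iterating this until no such common $(-1)$-vertex remains yields the relatively minimal $\Gamma'$, and the composite blowup $\Gamma \to \Gamma'$ (read backwards) gives the birational morphism $\psi$ in the statement. Since each step strictly decreases $|\Vert(\Gamma)|$ and $\Gamma_1,\Gamma_2$ are fixed, the process terminates.

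The key steps, in order, are: (1) Suppose $v\in\Vert(\Gamma)$ is a $(-1)$-vertex contracted by both $p_1$ and $p_2$. Invoke Lemma \ref{lem:contraction}(c) to replace $p_1$ and $p_2$, up to equivalence, by birational morphisms that \emph{both start} with the blowdown $\sigma_v\colon\Gamma\to\Gamma_0$ of $v$; this is legitimate because equivalence of birational morphisms is exactly what the conclusion of the lemma is phrased modulo (``$p_i$ is equivalent to $p_i'\circ\psi$''). (2) After factoring out $\sigma_v$, we obtain birational morphisms $\bar p_i\colon\Gamma_0\to\Gamma_i$ with $p_i$ equivalent to $\bar p_i\circ\sigma_v$, and $\Gamma_0$ has one vertex fewer than $\Gamma$. (3) Check that being contracted-by-both is inherited in the obvious way: a vertex of $\Gamma_0$ is contracted by both $\bar p_1$ and $\bar p_2$ iff the corresponding vertex of $\Gamma$ (which is unaffected by $\sigma_v$, since $v\ne$ it and $v$ is a $(-1)$-vertex so its blowdown does not change the contractibility status of other vertices) is contracted by both $p_1$ and $p_2$. (4) By induction on $|\Vert(\Gamma)|$, the diagram $(\Gamma_0,\bar p_1,\bar p_2)$ admits a relatively minimal refinement $\Gamma'\stackrel{p_i'}{\to}\Gamma_i$ with $\bar p_i$ equivalent to $p_i'\circ\psi_0$ for a birational morphism $\psi_0\colon\Gamma_0\to\Gamma'$; then $\psi\coloneq\psi_0\circ\sigma_v\colon\Gamma\to\Gamma'$ does the job, using transitivity of the equivalence relation on birational morphisms together with the compatibility of equivalence with composition (implicit in Definition \ref{def:morphism-equiv} and Proposition \ref{prop:morphism-equiv}). (5) The base case, where no $(-1)$-vertex of $\Gamma$ is contracted by both $p_i$, is the statement with $\psi=\id$ (and $\Gamma'=\Gamma$), which is relatively minimal by definition.

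I expect the main obstacle to be step (1)--(2): making precise and rigorous the claim that one may \emph{simultaneously} factor out the blowdown of the common $(-1)$-vertex $v$ from both morphisms, up to equivalence. Lemma \ref{lem:contraction}(c) gives this for a single morphism, so the content is that the same vertex $v$ works for both, and that after factoring, the two ``remainders'' $\bar p_i$ live on the \emph{same} graph $\Gamma_0=\sigma_v(\Gamma)$ — this is where one uses that a $(-1)$-vertex blowdown is canonically determined by the vertex, so the two applications of Lemma \ref{lem:contraction}(c) produce the same $\Gamma_0$ up to a canonical isomorphism, which Proposition \ref{prop:morphism-equiv} lets us absorb into the equivalence. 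A secondary, purely bookkeeping, obstacle is verifying in step (3) that the set of vertices of $\Gamma$ contracted by $p_i$ maps bijectively, minus $\{v\}$, onto the set of vertices of $\Gamma_0$ contracted by $\bar p_i$; this follows from Proposition \ref{prop:morphism-equiv} and Definition \ref{def:preimage}, since blowing down $v$ does not alter which of the other vertices are eventually contracted.
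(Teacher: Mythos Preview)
Your proposal is correct and follows essentially the same approach as the paper: pick a $(-1)$-vertex $v$ of $\Gamma$ contracted by both $p_1$ and $p_2$, use Lemma~\ref{lem:contraction}(c) to factor each $p_i$ (up to equivalence) through the common blowdown $\sigma_v$, and recurse on the number of vertices of $\Gamma$. The paper's proof is terser and does not spell out your bookkeeping steps (3)--(5), but the argument is the same.
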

\begin{proof}
Let $v\in\Vert(\Gamma)$ be a $(-1)$-vertex contracted 
by both $p_1$ and $p_2$. Replacing the $p_i$ 
by  equivalent birational morphisms $\Gamma\to\Gamma_i$ 
as in Lemma~\ref{lem:contraction}(c)
we may suppose that both $p_1$ and $p_2$ 
start with the same 
blowdown $\sigma_v\colon\Gamma\to\Gamma_v$ of $v$ 
and there are the factorizations
\[p_i\colon\Gamma\stackrel{\sigma_v}{\longrightarrow} 
\Gamma_v\stackrel{p'_{v,i}}{\longrightarrow} \Gamma_i\] 
where $\Gamma_v, p_{v,1}'$ and $p_{v,2}'$ form 
a diagram  \eqref{triangular diagram}.
Now the recursion on the number of vertices  of $\Gamma$ 
ends the proof. 
\end{proof}
\subsection{Dual graphs of NC-pairs}
 Let us recall the correspondence between 
 NC-pairs and weighted graphs.
\begin{definition}\label{def:dual}
Given an NC-pair $(X,D)$ the {\em dual graph} 
$\Gamma(D)$  is a  weighted graph
 whose vertices are in bijection with the irreducible 
 components of $D$ 
 and edges are in bijection with the nodes of  $D$. 
The loops of $\Gamma(D)$ at a vertex $C$ 
are in bijection with the  self-intersection points 
of the component $C$ of $D$, and 
the edges joining two different vertices $C_1$ 
and $C_2$ of $\Gamma(D)$ are in bijection with 
the points of $C_1\cap C_2$.
The weight of $C$ in $\Gamma(D)$ is 
the self-intersection  index $C^2$ of 
the component $C$ in $X$.  
The rational vertices in $\Ratio(\Gamma(D))$ 
correspond to the rational components of $D$.  
\end{definition}
 An NC-pair $(X,D)$ is 
 an SNC-pair if and only if 
 $\Gamma(D)$ contains no loops and multiple edges, 
 cf.  Definition \ref{def:NC-pairs}. 
 Clearly,  any NC-pair is dominated by an SNC-pair.  
 An NC-pair $(X,D)$ is minimal
 if the dual graph $\Gamma(D)$ is minimal. 
 
The notions of inner and outer blowups of 
an NC-pair are consistent with the notions 
of inner and outer blowups 
of the dual graph $\Gamma(D)$, respectively.
\begin{example}\label{deformations-remark} 
Recall that any non-complete normal  
algebraic surface $Y$ admits an SNC-completion $(X,D)$,  
where
$Y=X\setminus \supp D$,
and an NC-completion with a minimal dual graph.

For example, if $X=\PP^2$ and $D$ is a nodal cubic, 
then  $\Gamma(D)$ is minimal and consists 
of a single vertex of weight $9$ and a loop. 
So the pair $(\PP^2,D)$ is an NC-completion  
of the  affine surface 
$Y=\PP^2\setminus D$ with a minimal dual graph. 
There exists a non-minimal SNC-completion $(X,D_1)$ 
whose dual graph $\Gamma(D_1)$ is a cycle 
with three rational vertices and a cyclically ordered 
sequence of weights $((-2, -1, 4))$. 
Moreover, there exists a minimal such completion 
with the sequence of weights $((0,0,-2, -2, -2,-2,-3))$.  
Thus, two minimal cyclic graphs with rational vertices 
and the cyclically ordered sequences of weights $((9))$ 
and $((0,0,-2, -2, -2,-2,-3))$, respectively, 
are birationally equivalent. 

Similarly, one can show that for any $a, b>0$ 
the minimal linear graphs with sequences of weights 
\[[[a]],\quad [[\underbrace{-2,-2,\ldots,-2,0,0}_{a+1}]] \,\,\,\text{and}\,\,\, 
[[\underbrace{-2,\ldots,-2,-3}_{a-1},\underbrace{-2,\ldots,-2,0,1}_{b+1}]]\]
are  birationally equivalent, cf. Theorem~\ref{prop:classification}. 
\end{example}
The assumption that the surface $Y=X\setminus\supp D$ 
is affine imposes severe restrictions 
on the dual graph $\Gamma(D)$. 
\begin{lemma}\label{lem:Hartogs} 
Let an NC-pair $(X,D)$ be  a completion 
of a normal affine surface  
$Y=X\setminus\supp D$. Then the following hold.
\begin{enumerate}
\item[(a)] $\supp D$ is connected 
and supports an effective ample divisor. 
\item[(b)] The dual graph $\Gamma(D)$ is connected, 
the intersection form $I(\Gamma(D))$ 
is not seminegative definite, 
and $D$ and $\Gamma(D)$ 
are not contractible. 
\item[(c)]
If $\supp D=C$ is an irreducible, reduced curve, 
then $C^2>0$ and the linear system $|C|$ is ample. 
If, moreover, $C$ is smooth and rational,
then $X$ is rational. 
\end{enumerate}
\end{lemma}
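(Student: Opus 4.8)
The plan is to deduce everything from the Goodman criterion (Lemma \ref{lem:crit-affiness}) together with Hodge-index type considerations. First I would prove (a). Since $Y = X \setminus \supp D$ is affine, Goodman gives a sequence of blowups $\phi\colon X' \to X$ in points of $\supp D$ and infinitely near points so that $\supp\phi^*(D)$ is the support of an effective ample divisor $H'$ on $X'$. An ample divisor has connected support (its complement is affine, hence has no complete curves, so in particular $\supp H'$ cannot disconnect $X'$ which is connected — more simply, $\supp\phi^*D$ is connected since removing it yields the affine $Y$, and an affine variety of positive dimension is connected when $Y$ is irreducible; since $\phi$ is an isomorphism over $Y$ and each blown-up point lies on $\supp D$, connectedness of $\supp\phi^*D$ forces connectedness of $\supp D$). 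For the nef assertion: push forward. Write $H' = \sum a_i E_i$ with all $a_i > 0$ and $\supp H' = \supp\phi^*D$; set $N = \phi_*(H')$, an effective divisor with $\supp N = \supp D$. For any irreducible curve $C \subset X$ we have $N \cdot C = \phi^*(N)\cdot \phi^*(C)$ and $\phi^*(N) = H' + (\text{effective } \phi\text{-exceptional})$, hence $N\cdot C = H'\cdot\phi^*C + (\ge 0) \ge 0$ since $H'$ is ample and $\phi^*C$ is effective. Thus $N$ is nef, proving (a).

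Next (b). Connectedness of $\Gamma(D)$ is the graph translation of connectedness of $\supp D$. That $I(\Gamma(D))$ is not negative definite: the intersection matrix $A(\Gamma(D))$ is the intersection matrix of the components of $D$; if it were negative definite then by Grauert's contractibility criterion (or Lemma \ref{lem:contractibility} applied after noting the needed numerical input), $D$ would be contractible to a point — more directly, a nef divisor $N$ with support exactly $\supp D$ satisfies $N^2 \ge 0$, and writing $N = \sum b_j C_j$ with $b_j > 0$ over the components $C_j$ of $D$, we get $0 \le N^2 = \sum_{j,k} b_j b_k (C_j\cdot C_k)$, which contradicts negative definiteness of $I(\Gamma(D))$ on the vector $(b_j)$ with positive entries unless $N = 0$, impossible. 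Non-contractibility of $D$ (hence of $\Gamma(D)$ by Lemma \ref{lem:contractibility}(c), contractibility being a birational invariant): if $D$ were contractible, its intersection form would be negative definite by Lemma \ref{lem:contractibility}(a), contradicting what we just proved.

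Finally (c). If $D = C$ is a single irreducible curve, then the nef divisor $N = bC$ ($b>0$) has $N^2 \ge 0$, so $C^2 \ge 0$; and $C^2 = 0$ is excluded because then $C$ is nef with $C^2 = 0$, so by Hodge index any divisor class is (up to $C$-multiples) numerically proportional to $C$ on the hyperplane $C^\perp$, forcing $X\setminus C$ to carry a non-constant... — more cleanly: $C^2 > 0$ follows since an affine surface $Y = X\setminus C$ with $C$ irreducible forces, via Goodman, $\supp\phi^* C$ to support an ample divisor; an irreducible curve that stays irreducible has $C^2 > 0$ by the argument that $\phi^*C = C' + (\text{exceptional})$ with $C'^2 < (\phi^*C)^2$ is impossible for the one-curve case when $C$ is not blown up (and if $C$ is blown up one tracks self-intersections), and ampleness of $|C|$ then follows from the Nakai–Moishezon criterion since $C^2 > 0$ and $C\cdot C > 0$, $C$ nef, with $X\setminus C$ containing no complete curves. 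The rationality of $X$ when $C$ is smooth rational: $X$ contains the affine rational-looking complement; apply Castelnuovo's rationality criterion, or note $Y = X\setminus C$ is affine and $C\cong \PP^1$ with $C^2 > 0$ gives a pencil structure — I expect this to follow by taking the linear system $|C|$, which is ample with $C$ a smooth rational curve of positive self-intersection, and arguing that a general member is again smooth rational so $X$ is swept out by rational curves, hence rational (using that $X$ is a smooth projective surface here, as $C$ lies in the smooth locus and rationality is a birational matter, we may assume $X$ smooth).

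\textbf{Main obstacle.} The genuinely delicate point is part (c): upgrading "$C$ supports an ample divisor after blowups" to "$C^2 > 0$ and $|C|$ itself is ample on $X$", since blowups can change self-intersections and one must argue that no such blowup is actually needed, or track the numerics carefully; and then the passage from $C \cong \PP^1$, $C^2>0$ to rationality of $X$ requires invoking Castelnuovo's criterion (or the structure of the pencil $|C|$) rather than pure graph combinatorics. The parts (a) and (b) are essentially formal consequences of Goodman's criterion plus linear algebra on the intersection form.
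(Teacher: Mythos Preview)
Your plan for (a) and (b) is essentially correct and close in spirit to the paper's proof, with one technical slip and one structural difference worth noting.

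\textbf{The slip.} In your nefness argument you write $\phi^*(N) = H' + (\text{effective }\phi\text{-exceptional})$. This is not justified: $\phi^*\phi_*H' - H'$ is certainly $\phi$-exceptional, but its coefficients need not be nonnegative (Goodman gives you \emph{some} ample $H'$ with the right support, with no control on the exceptional multiplicities). The fix is immediate: use the projection formula directly, $N\cdot C = \phi_*H'\cdot C = H'\cdot\phi^*C \ge 0$, which is exactly what the paper does.

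\textbf{The structural difference.} The paper does not invoke Goodman's criterion for (a). Instead it embeds $Y\hookrightarrow\AA^n\subset\PP^n$, takes the hyperplane at infinity on the closure $\overline{Y}$, pulls back through a resolution to an SNC-completion $(X',H'_{\rm red})$, and then compares $(X',H'_{\rm red})$ with $(X,D)$ via a common NC-domination $(\widetilde X,\widetilde D)$; the nef divisor on $X$ is obtained by pull-push through this roof. Your route via Lemma~\ref{lem:crit-affiness} is more direct and perfectly adequate once the projection-formula step is cleaned up. Part (b) is then identical to the paper's argument.

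\textbf{Part (c).} Here you correctly flag the real obstacle, and your sketch does not close it. The paper does not argue $C^2>0$ from scratch: it cites Gizatullin \cite[Lemma~2]{Gi1} for $C^2>0$, then Nakai--Moishezon for ampleness (your ``$X\setminus C$ contains no complete curves'' is the right ingredient for the second Nakai condition), and again cites \cite{Gi1} or \cite[Proposition~V.4.3]{BHPV} for rationality when $C$ is smooth rational. Your attempted direct exclusion of $C^2=0$ via Hodge index trails off, and the Goodman-based attempt does not work as written. If you want a self-contained argument for the smooth rational case, the line is: $C^2=0$ forces $C$ to be a fiber of a genuine pencil (Riemann--Roch on a smooth rational curve with $C^2=0$ gives $h^0(\mathcal O_X(C))\ge 2$), and then the other fibers are complete curves in $Y$, contradicting affineness; for general irreducible $C$ one really does need the input from \cite{Gi1}. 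Similarly, your Castelnuovo/pencil idea for rationality is on the right track but would need the vanishing $q(X)=P_2(X)=0$ or an explicit ruling, neither of which you supply.
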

\begin{proof} According to the Goodman criterion 
of affiness for surfaces \cite[Theorem 2]{Go69} 
(cf. also \cite[Lemma 2]{Gi1}),  
$\supp D$ coincides with the support of 
an ample divisor $H$ on $X$.
The connectedness of $\supp D=\supp H$ follows now 
from the Lefschetz hyperplane section theorem, 
see e.g. \cite[p. 166, Corollary]{Go69} 
or \cite[Corollary II.6.2]{Har70}. 
Hence the graph $\Gamma(D)$ is connected. 
This proves statement (a). 

Since $H^2> 0$,
the intersection form $I(\Gamma(D))$ 
cannot be seminegative definite. 
By Lemma \ref{lem:contractibility} 
the dual graph $\Gamma(D)$ is not contractible, 
hence also $D$ is  not contractible. This proves (b).

Assume now that $\supp D=C$ 
is a reduced  irreducible curve. 
Then $C^2>0$, 
and the linear system $|C|$ 
is ample by the Nakai-Moishezon criterion. 
If $C$ is smooth and rational, then $X$ 
is rational as well, see 
\cite[Remarks 2 and 3]{Gi1} or 
\cite[Proposition~V.4.3]{BHPV}.  This shows (c).
\end{proof}
Lemma \ref{lem:Hartogs} justifies the following convention. 
\begin{sit}\label{convention-bis} \textbf{Convention.} 
{\em In the following, we consider only
weighted graphs $\Gamma$ with no contractible
connected component.}
\end{sit}
\begin{definition}\label{def:repr}
Every decomposition of $\Phi\in\Bir((X_1,D_1),(X_2,D_2))$ 
into a sequence of isomorphisms, blowups and blowdowns 
induces a  birational transformation of dual graphs 
$\phi\colon\Gamma(D_1)\dashrightarrow\Gamma(D_2)$ 
called a \textit{representation} of $\Phi$.
\end{definition}
In the opposite direction,
we have the following result.
\begin{proposition}[{\rm cf. \cite[Proposition 3.34]
{FKZ-graphs}}]\label{prop:transf-repr}
Given an NC-pair $(X,D)$ and a birational 
transformation of the dual graph 
$\phi\colon\Gamma(D)\dashrightarrow \Gamma'$ 
consisting of a sequence of blowups 
and blowdowns, one can find a new NC-pair 
$(X',D')$ and a birational transformation of NC-pairs 
$\Phi\colon (X,D)\dashrightarrow (X',D')$
 such that $\Phi|_{X\setminus \supp D}\colon 
 X\setminus \supp D\to X'\setminus \supp D'$ 
is an isomorphism and $\Phi$ is represented 
by $\phi$; in particular, $\Gamma(D')=\Gamma'$. 
If $\phi$ contains no outer blowup,
then $\Phi$ is  uniquely defined up to 
isomorphism of NC-pairs.
\end{proposition}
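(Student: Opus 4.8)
The plan is to induct on the number of elementary steps in a decomposition of $\phi$. By Remark \ref{rems}.2 we may write $\phi$ as a composite of isomorphisms, inner and outer blowups, and inner and outer blowdowns of weighted graphs, so it suffices to realize a single such step: once the NC-pair produced at one step has dual graph equal to the source graph of the next step — which is exactly the clause ``$\Gamma(D')=\Gamma'$'' that we prove at each step — the realizations can be concatenated, a composite of maps restricting to isomorphisms of $X\setminus\supp D$ again has that property, and a composite of representations is a representation. Throughout we use the standing Convention \ref{convention} and the fact that, in an NC-pair, $D$ lies in the smooth locus of $X$, so every curve $C_v$ corresponding to a vertex $v$ of $\Gamma(D)$ is contained in the smooth locus of $X$.

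For an isomorphism step $\Gamma(D)\to\Gamma'$ we take $(X',D')=(X,D)$ and relabel $\Gamma(D)$ accordingly. For an inner blowup of an edge $e$ of $\Gamma(D)$ we blow up the node of $D$ corresponding to $e$; by Definition \ref{def:inner} the result is again an NC-pair, and by the consistency of inner blowups of NC-pairs with those of dual graphs (noted after Definition \ref{def:dual}) its dual graph is the inner blowup of $\Gamma(D)$ in $e$. The center is uniquely determined by $e$, so this step is unique up to isomorphism. For an outer blowup at a vertex $v$ we choose a point $p$ on the component $C_v$ that is a smooth point of $D$ (such a point exists since the nodes of $D$ on $C_v$ form a finite set) and blow it up; again the result is an NC-pair whose dual graph is the outer blowup of $\Gamma(D)$ at $v$. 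Here the choice of $p$ is the only freedom and it is genuine, which is precisely the source of the stated exception for outer blowups.

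It remains to realize a blowdown of a $(-1)$-vertex $v$. By the definitions of inner and outer blowdowns, $v$ is rational, at most linear, has no incident loop, and has weight $-1$; hence $C_v$ is smooth (no self-intersection on $D$), rational, with $C_v^2=-1$, and it meets $D-C_v$ transversally in at most two points. Since $C_v$ lies in the smooth locus of $X$, Castelnuovo's criterion yields a contraction $\sigma\colon X\to X'$ of $C_v$ to a point, with $X'$ a normal projective surface that is smooth near $\sigma(C_v)$ and with $\sigma$ an isomorphism off $C_v$. Put $D'=\sigma_*(D-C_v)$. Then $(X',D')$ is an NC-pair with $\Gamma(D')=\Gamma'$: away from $\sigma(C_v)$ nothing changes, while at $\sigma(C_v)$ the at most two points of $C_v\cap(D-C_v)$ are distinct points of $C_v\cong\PP^1$ at which the incident branches of $D-C_v$ are transversal to $C_v$, hence map to distinct tangent directions at $\sigma(C_v)$; so when $\deg(v)=2$ their images meet transversally in a single node and when $\deg(v)=1$ no node is created — exactly the inner, respectively outer, blowdown of $\Gamma(D)$ at $v$. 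Taking $\Phi=\sigma$ completes this step, and $C_v$, together with the contraction $\sigma$ by its universal property, is determined by $v$ up to isomorphism, so the step is unique up to isomorphism of NC-pairs.

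Concatenating the steps gives $(X',D')$ and $\Phi\colon(X,D)\dashrightarrow(X',D')$ with the asserted properties, and if no step is an outer blowup then, by the uniqueness in each of the remaining three cases, $\Phi$ is unique up to isomorphism of NC-pairs. I expect the blowdown case to be the only real obstacle: one must verify not merely that contracting $C_v$ keeps the ambient surface normal, but that the images of the (at most two) branch points of $D$ along $C_v$ assemble into a genuine normal-crossing node rather than a worse singularity — this is where the transversality of $C_v$ with $D-C_v$ and the smoothness of $X$ along $C_v$ enter.
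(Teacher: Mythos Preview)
Your proposal is correct and follows essentially the same approach as the paper's proof: induct on the number of elementary steps, realize each step geometrically, and note that inner blowups and blowdowns are determined by the corresponding node or $(-1)$-component. The paper dispatches the existence for a single step with ``clearly holds'' and handles uniqueness in one sentence, whereas you spell out the Castelnuovo contraction and the verification that the blown-down pair is again NC; this extra care is welcome but not a different method.
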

\begin{proof} The first statement clearly holds
for a single blowup and a single blowdown 
of $\Gamma(D)$. By recursion, it holds 
in the general case. 
The uniqueness follows by recursion 
from the facts that
to an inner blowup of $\Gamma(D)$ 
there corresponds the blowup of $X$ 
at a uniquely defined node of $D$, 
and to a blowdown of a $(-1)$-vertex 
of $\Gamma(D)$ there corresponds the 
contraction of a uniquely defined 
$(-1)$-component of $D$. 
\end{proof}
\begin{remark}\label{rem:up-and-down} 
Let $(X,D)$ be an NC-pair, and let $C$ 
be a smooth component of $D$ that
corresponds to a  
$(0)$-vertex $v$ of $\Gamma(D)$ of degree 1 or 2. 
Fix a point $P\in C$, which is a node of $D$ 
in the case where $\deg_{\Gamma(D)}(v)=2$.
Blowing $P$ up and 
contracting the proper transform of $C$ yields 
a birational transformation 
$\Phi\colon (X,D)\dasharrow (X',D')$ called an 
 \textit{elementary transformation}. 
 The corresponding birational transformation $\phi$ 
of $\Gamma(D)$ is also called \textit{elementary}; 
it affects the weights of the 
neighbors of $C$ in $\Gamma(D)$.
See the proof of Lemma \ref{lem:min-mod} for an example.

Let now $D=C$ be a smooth rational curve, and so 
$\Gamma(D)$ consists of a single isolated vertex 
$C$ of weight $a=C^2$ with no incident loop. 
Take two distinct points $P_1$ and $P_2$ of $C$. 
Blowing  $P_i$ up produces an NC-pair $(X_i,D_i)$ 
whose dual graph $\Gamma(D_i)$ 
has a sequence of weights $[[-1,a-1]]$. 
Let $\sigma_i\colon X_i\to X$ be the blowdown 
of the exceptional curve. The pairs $(X_1,D_1)$ 
and $(X_2,D_2)$ 
are not isomorphic, in general, while the dual 
graphs $\Gamma(D_1)$ and $\Gamma(D_2)$ are. 
The corresponding birational transformation 
$\Phi=\sigma_2^{-1}\sigma_1\colon (X_1,D_1)\dasharrow (X_2,D_2)$ 
is not an isomorphism of pairs. 
It restricts to the identity on $Y=X\setminus \supp D$. 
On the level of dual graphs,  the birational map 
$\phi=\sigma_2^{-1}\sigma_1\colon \Gamma(D_1)
\dasharrow\Gamma(D_2)$ in the induced representation 
of $\Phi$ fits in the commutative diagram
\[
\begin{tabular}{l@{\hspace{30pt}}l@{\hspace{30pt}}l}
    \pgfdeclarelayer{background}
		\pgfsetlayers{background,main}
    \begin{tikzpicture}[scale=1.3]
      
       \node[] (G) at (0,1.4) {$\Gamma(D)$};
       \node[] (-1) at (0.3,2.7) {\footnotesize $-1$};
       \node[vertex] (a1) at (-0.7,2.4) {};
       \node[] (a-1) at (2.3,2.7) {\footnotesize $a-1$};
        \node[vertex] (b1) at (0.3,2.4) {};
          \node[] (-1) at (1.3,2.7) {\footnotesize $-1$};
       \node[vertex] (a2) at (1.3,2.4) {};
       \node[] (a-1) at (-0.8,2.7) {\footnotesize $a-1$};
        \node[vertex] (b2) at (2.3,2.4) {};
        \node[] (G1) at (-1.5, 2.4) {$\Gamma(D_1)$};
        \node[] (c) at (0.8,1.7) {\footnotesize $a$};
        \node[] (G2) at (3.2, 2.4) {$\Gamma(D_2)$};
        \node[vertex] (c) at (0.8,1.4) {};
        \node[] (a-2) at (0.8,3.7) {\footnotesize $a-2$};
        \node[vertex] (d) at (0.8,3.4) {};
        \node[] (-1) at (-0.2,3.7) {\footnotesize $-1$};
        \node[] (G) at (-0.6, 3.4) {$\Gamma$};
        \node[vertex] (d1) at (-0.2,3.4) {};
        \node[] (-2) at (1.6,3.7) {\footnotesize $-1$};
        \node[vertex] (d2) at (1.8,3.4) {};
        \draw[thick, dashed](0.3,2.4) -- (1,2.4);
        \draw[->](1,2.4) -- (1.2,2.4);
        \node[] (phi) at (0.8, 2.2) {\footnotesize $\phi$};
   
      \draw[edge] 
      (a1)edge(b1);    
      \draw[edge]   
      (a2)edge(b2);  
      \draw[edge] 
      (d1)edge(d);   
      \draw[edge] 
      (d)edge(d2);   
     \draw[->](-0.2,2.2) -- (0.6,1.6); 
      \node[] (p1) at (-0.2, 1.9) {\footnotesize $\sigma_1$};
      \node[] (p2) at (1.8, 1.9) {\footnotesize $\sigma_2$};
      \draw[->](1.8,2.2) -- (1,1.6);
           \draw[->](0.9,3.2) -- (1.7,2.6); 
            \node[] (p1) at (-0.1, 3) {\footnotesize $p_1$};
      \node[] (p2) at (1.7, 3) {\footnotesize $p_2$};
           \draw[->](0.7,3.2) -- (-0.1,2.6);
     
         \end{tikzpicture}
\end{tabular}
\]
where $\Gamma$ is the dual graph of the pair 
$(\tilde X, \tilde D)$ obtained from $(X,D)$ 
by blowing up the points 
$P_1$ and $P_2$ on $X$. Varying the positions 
of $P_1$ and $P_2$ on $C$ yields nontrivial deformations 
of the participating pairs.  
Thus, $\Phi$ cannot be reconstructed in general 
from its representation $\phi=p_2\circ p_1^{-1}$, 
once outer blowups are involved.
\end{remark}
We have the following geometric analog of 
Lemma \ref{lem:rel-min}. 
\begin{lemma}\label{lem:rel-min-pairs} 
Any birational transformation 
$\Phi\colon (X_1,D_1)\dasharrow (X_2,D_2)$ 
between minimal NC-pairs $(X_i,D_i)$
fits in a commutative diagram
\eqref{diagr:bir-pairs} such that a representation 
$\phi$ of $\Phi$ fits into the corresponding 
relatively minimal diagram \eqref{triangular diagram}. 
\end{lemma}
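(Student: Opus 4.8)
The plan is to combine Lemma \ref{lem:bir-NC-surfaces}, which provides a dominating NC-pair $(\widetilde X,\widetilde D)$ for $\Phi$, with the purely combinatorial Lemma \ref{lem:rel-min}, which upgrades an arbitrary triangular diagram of dual graphs to a relatively minimal one, together with the faithfulness of the dictionary between blowdowns of boundary components and blowdowns of vertices of the dual graph recorded in Proposition \ref{prop:transf-repr}. First I would invoke Lemma \ref{lem:bir-NC-surfaces} to obtain a commutative diagram \eqref{diagr:bir-pairs} with $(\widetilde X,\widetilde D)$ an NC-pair and $P_1,P_2$ compositions of an isomorphism and blowdowns of smooth rational $(-1)$-boundary components. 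Passing to dual graphs, each $P_i$ induces a birational morphism $p_i\colon\Gamma(\widetilde D)\to\Gamma(D_i)$, so we get a triangular diagram \eqref{triangular diagram} with apex $\Gamma(\widetilde D)$ and the minimal graphs $\Gamma(D_1),\Gamma(D_2)$ at the bottom (minimality of $\Gamma(D_i)$ being exactly the hypothesis that $(X_i,D_i)$ is a minimal NC-pair); by construction a representation $\phi$ of $\Phi$ fits in this diagram.

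Next I would feed this diagram into Lemma \ref{lem:rel-min}: it yields a further birational morphism $\psi\colon\Gamma(\widetilde D)\to\Gamma'$ and birational morphisms $p_i'\colon\Gamma'\to\Gamma(D_i)$ with $p_i\sim p_i'\circ\psi$, where $\Gamma',p_1',p_2'$ is relatively minimal. The combinatorial content is then exactly what we want, except that $\Gamma'$ a priori lives only in the world of graphs; the task is to realize it geometrically, i.e.\ to produce an NC-pair $(\widetilde X{}',\widetilde D{}')$ whose dual graph is $\Gamma'$, together with blowdown morphisms $P_i'\colon (\widetilde X{}',\widetilde D{}')\to (X_i,D_i)$ inducing $p_i'$, and an NC-morphism $(\widetilde X,\widetilde D)\to(\widetilde X{}',\widetilde D{}')$ inducing $\psi$. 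Here is where Proposition \ref{prop:transf-repr} enters: since $\psi$ is a composition of inner blowdowns (all blowdowns of $(-1)$-vertices arising inside a contractible configuration are inner by Corollary \ref{cor:contractibility}, and after Convention \ref{convention} the relevant graphs have no contractible component, so the vertices contracted by $\psi$ are interior nodes of the configuration), the inverse blowups are inner on the geometric side and there is a unique NC-pair $(\widetilde X{}',\widetilde D{}')$ with $\Gamma(\widetilde D{}')=\Gamma'$ and a factorization $P_i=P_i'\circ \Psi$ through a birational morphism $\Psi\colon(\widetilde X,\widetilde D)\to(\widetilde X{}',\widetilde D{}')$ of NC-pairs; the commutativity $P_1=P_2$ on the level of $(\widetilde X,\widetilde D)$ descends because $\Psi$ is surjective. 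Then $(\widetilde X{}',\widetilde D{}'), P_1', P_2'$ is the desired diagram \eqref{diagr:bir-pairs} for $\Phi$, and by construction $\phi'=p_2'\circ (p_1')^{-1}$ is a representation of $\Phi$ fitting in the relatively minimal diagram \eqref{triangular diagram} formed by $\Gamma',p_1',p_2'$.

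The main obstacle is precisely the lifting step: the combinatorial Lemma \ref{lem:rel-min} is free to cancel $(-1)$-vertices contracted on both sides, but one must check that every such cancellation corresponds geometrically to contracting the \emph{same} $(-1)$-boundary curve in $(\widetilde X,\widetilde D)$, so that removing it produces a genuine NC-pair dominating both $(X_1,D_1)$ and $(X_2,D_2)$. This is where the equivalence relation on birational morphisms from Definition \ref{def:morphism-equiv} does the bookkeeping: a $(-1)$-vertex contracted by both $p_1$ and $p_2$ is, after replacing the $p_i$ by equivalent morphisms starting with its blowdown (Lemma \ref{lem:contraction}(c)), contracted by a single common blowdown, and Proposition \ref{prop:transf-repr} guarantees that on the surface level this blowdown is the contraction of a uniquely determined $(-1)$-curve in $\widetilde D$; iterating the recursion on the number of vertices of $\Gamma(\widetilde D)$, exactly as in the proof of Lemma \ref{lem:rel-min}, strips off all such curves and leaves $(\widetilde X{}',\widetilde D{}')$. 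The remaining verifications — that the intermediate pairs stay NC-pairs (immediate, since inner blowups of NC-pairs are NC-pairs), that $\Phi|_{X_1\setminus\supp D_1}$ is unaffected (all modifications take place inside the boundary), and that $\phi'$ is indeed a representation of $\Phi$ fitting in the new diagram in the sense of Definition \ref{def:fit-diagram} — are routine.
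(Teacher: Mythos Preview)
Your approach is essentially correct and, once the dust settles, coincides with the paper's: both proofs contract, one at a time, the $(-1)$-boundary components of $(\widetilde X,\widetilde D)$ that are blown down under both $P_1$ and $P_2$, and iterate until none remain. The paper does this directly on the geometric level, citing the factorization of a birational morphism of smooth surfaces through the contraction of any $(-1)$-curve in its exceptional locus (\cite[Proposition~II.8]{Be}) and inducting on the Picard rank of $\widetilde X$; it also handles the mildly singular case via a simultaneous resolution, a point you do not address.

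Your detour through the combinatorial Lemma~\ref{lem:rel-min} is harmless but buys nothing: as you yourself recognise in the final paragraph, the only way to lift the factorization $p_i\sim p_i'\circ\psi$ to a geometric factorization $P_i=P_i'\circ\Psi$ is to rerun the recursion on the surface, curve by curve, which is exactly the paper's argument. Two points of confusion in your middle paragraph should be cleaned up. First, the claim that $\psi$ consists of \emph{inner} blowdowns is both unjustified and generally false---a $(-1)$-vertex contracted by both $p_i$ may well be an end vertex of $\Gamma(\widetilde D)$, so its blowdown is outer---and Corollary~\ref{cor:contractibility} says nothing of the sort; fortunately this is irrelevant, since a blowdown of either kind lifts uniquely to the surface. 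Second, Proposition~\ref{prop:transf-repr} alone does not yield the factorization $P_i=P_i'\circ\Psi$: that proposition produces \emph{some} NC-pair realising a given graph transformation, not a factorization of a \emph{given} geometric morphism. The factorization needs the geometric input your last paragraph supplies---that a $(-1)$-curve contracted by a birational morphism can be contracted first---which is precisely \cite[Proposition~II.8]{Be} as invoked by the paper.
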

\begin{proof} 
Starting with an arbitrary  commutative diagram
\eqref{diagr:bir-pairs} we repeat the procedure 
from the proof of Lemma \ref{lem:rel-min} 
on the geometric level. 
Namely, to each at most linear 
$(-1)$-vertex of $\Gamma(\widetilde D)$ 
there corresponds a $(-1)$-component $C$ of 
$\widetilde D$ 
which meets the union of other components 
transversally in at most two points.  
If $C$ is contracted under the $P_i\colon X\to X_i$, 
$i=1,2$ then there are factorizations 
\[P_i\colon \widetilde X\stackrel{\sigma_C}{\longrightarrow} 
\widetilde X'\stackrel{P'_i}{\longrightarrow} X_i,\] 
where $\sigma_C$ stands for the contraction of $C$ in 
$\widetilde X$, see \cite[Proposition II.8]{Be}. 
This produces a new NC-pair $(\widetilde X',\widetilde D')$ 
which still dominates both $(X_i,D_i)$ fitting 
in a new diagram 
\eqref{diagr:bir-pairs} with the same 
$\Phi\colon (X_1,D_1)\dasharrow (X_2,D_2)$. 
A simultaneous resolution of singularities of 
$X_1,X_2$ and $\tilde X$ does not affect our procedure.
Hence, we may assume that all these surfaces 
are smooth. We have $\rho(\widetilde X')=
\rho(\widetilde X)-1$
where $\rho$ stands for the Picard rank, 
see e.g. \cite[Proposition II.3]{Be}. 
The induction on the Picard rank shows 
that this procedure ends 
with a relatively minimal diagram. Finally, we 
contract simultaneously the exceptional divisors 
of the resolutions
and arrive at the same conclusion 
for the original surfaces. 
\end{proof}
%
\section{The Graph Lemma and birational rigidity 
of weighted graphs} \label{sec:graph-lemma}
In this section we elaborate criteria as to when 
$\Bir(X,D)=\Inn(X,D)$ 
and, on the combinatorial level,  as to when  
every relatively minimal birational 
transformation between two weighted graphs 
 is inner, see Theorem \ref{th:bir=inn} and Proposition 
 \ref{cor:graph}, respectively. 
To this end, we apply an important tool 
called the Graph Lemma. 
\subsection{The Graph Lemma}
\label{graph-lemma-subsection}
We use the following terminology.
\begin{definition}\label{def:admissible}
Let $\Gamma$  be a minimal  weighted graph. 
A circular segment $S$ of $\Gamma$ is called
\textit{admissible} if either the weights of 
its vertices are $\le-2$,
or it consists of a single vertex of weight $\le 2$ 
and a loop.
A linear segment $L$ of $\Gamma$  is
called \textit{admissible} if the weights 
of its vertices are
$\le -2$. 
\end{definition}
The following Graph Lemma \ref{graph-lemma} 
extends
\cite[Proposition A1]{FZ} and 
\cite[Lemma 2.4(b)]{FKZ-graphs}. 
For the reader's convenience we provide a proof. 
 Recall that for a contraction 
$p\colon\Gamma\to\Gamma'$ 
we consider $\Vert(\Gamma')$ 
as a subset of $\Vert(\Gamma)$, that is, 
we identify every vertex of  
$\Gamma'$ 
with its proper preimage in $\Gamma$, 
see Definition \ref{def:preimage}.
For a vertex $v\in\Gamma$,
$p^v\colon\Gamma\to\Gamma^v$ stands for the
maximal subsequence of blowdowns in $p$ 
that preserve $v$, see Notation \ref{nota:p-v}.
\begin{graph lemma}
\label{graph-lemma} 
Consider a relatively minimal diagram 
\eqref{triangular diagram}
with connected 
weighted graphs  $\Gamma$, $\Gamma_1$ and 
$\Gamma_2$ 
and with birational morphisms 
$p_i\colon\Gamma\to\Gamma_i$, 
where the $\Gamma_i$ are minimal for $i=1,2$. 
Then the following hold.
\begin{enumerate}
\item[(a)] For the  sets 
of branch vertices we have
\[\Br(\Gamma_1)=\Br(\Gamma)\cap \Vert(\Gamma_1) 
= \Br(\Gamma)\cap \Vert(\Gamma_2)
=\Br(\Gamma_2).\]
Furthermore, the degrees 
of vertices in 
$B:=\Br(\Gamma_1)=\Br(\Gamma_2)$
do not change under the contractions 
$p_1$ and $p_2$.
\item[(b)]
Let $\mathcal{C}$ be a connected component  
of $\Gamma\ominus B$.
Then for $i=1,2$ the image $S_i=p_i(\mathcal{C})$
is a (nonempty) segment of $\Gamma_i$.
Conversely, for any segment $S_i$ 
of $\Gamma_i$ the total preimage 
$p_i^*(S_i)$ is a connected component of 
$\Gamma\ominus B$.
The induced correspondence 
$S_1\leftrightsquigarrow S_2$
between the
segments of $\Gamma_1$ and 
of $\Gamma_2$
is a bijection that preserves,   respectively,
the sets of circular segments, 
inner linear segments and 
extremal linear segments.
\item[(c)]
Assume that  $\Br(\Gamma)\neq B$. 
 Then for every vertex 
 $v \in  \Br(\Gamma)\setminus B$ the following hold.
 \begin{itemize}
\item[($c_1$)] 
$v$ is rational with no incident loop
and all branches of $\Gamma$ 
at $v$ are simple.
There are two distinct
branches 
 $W_1$ and $W_2$ of 
$\Gamma$ at $v$ such that $p_1$ 
contracts  $W_1$ 
and $v$ and sends  $W_2$ into
a non-admissible extremal linear segment 
of $\Gamma_1$, 
and symmetrically for $p_2$. 
\item[($c_2$)]
We have $\deg_{\Gamma}(v)\in\{3,\,4\}$.

$(i)$ If $\deg_{\Gamma}(v)=3$, then 
$p^v_i$ contracts at least one branch 
$W_i$ at $v$, $i=1,2$. 

$(ii)$ If $\deg_{\Gamma}(v)=4$, then exactly
two branches of $\Gamma$ at $v$, 
say $W_1$ and $W_1'$,
are contracted in $\Gamma_1$ 
and the other two, say $W_2$ and $W_2'$,
are contracted in $\Gamma_2$.
The branches $W_i, W_i'$, $i=1,2$ 
are linear and 
the  minimal graphs $\Gamma_i$
are linear and non-admissible. 
\footnote{See  Proposition \ref{prop:4branches} 
below
for a more detailed description.}
\end{itemize}
\end{enumerate}
\end{graph lemma}
\begin{proof}
($a$) We start with the following claims.

\smallskip 

\noindent {\bf Claim 1.} \textit{
Let $v\in \Vert(\Gamma)$. 
\begin{itemize}
\item[(i)]
Assume that a simple branch $W_1$ 
of $\Gamma$ at $v$ is
contracted by $p^v_1$. Then $v$ 
is a rational vertex
contracted by 
$p_2$. Furthermore, $p^v_2$ 
induces an isomorphism 
of weighted graphs
$W_1\cong p^v_2(W_1)$, and all branches of 
$\Gamma$ at $v$ are simple.
\item[(ii)]
Let $W_1$ and $W_2$ be 
simple branches  of $\Gamma$ at $v$ 
such that $W_i$ is
contracted by $p^v_i$ for $i=1,2$. 
Then  
$W_1\neq W_2$ and $v$ is contracted in 
both $\Gamma_1$ and $\Gamma_2$. 
\end{itemize}}
\begin{proof} (i) 
All $(-1)$-vertices of $W_1$ are contracted in 
$\Gamma_1$. 
By the assumption of relative minimality
no $(-1)$-vertex of $W_1$
is contracted in $\Gamma_2$. 
Since 
$\Gamma_2$ is supposed to be minimal,
$p_2(W_1)$ contains no $(-1)$-vertex. 
This can only happen provided $p_2$
contracts $v$ 
prior to the contraction of
any vertex of $W_1$.  
Since $p_2$ contracts no $(-1)$-vertex of $W_1$, 
$p^v_2$ contracts no vertex of $W_1$,
and  so $W_1\cong p^v_2(W_1)$ 
as weighted graphs.

Suppose to the contrary that 
there is a non-simple branch $W$ 
of $\Gamma$ at $v$. Then $p^v_2(W)$
is either  a non-simple branch 
of $p^v_2(\Gamma)$ at $v$
different from the simple branch 
$p^v_2(W_1)\cong W_1$, or 
a loop incident to $v':=p^v_2(v)$,
see Lemma \ref{contract-branch}(a). 
It follows that $v'$ is a  branch vertex 
 of $p^v_2(\Gamma)$. Hence, 
$v'$ cannot be contracted  by $p_2$ on the next steps, 
that is, $p_2=p_2^v$ does not contract $v$, 
contrary to the preceding conclusion. 
Now statement (i) follows. 

(ii) By (i) we have $W_1\neq W_2$
and $v$ is contracted in 
both $\Gamma_1$ and $\Gamma_2$.
\end{proof}
\noindent {\bf Claim 2.} \textit{We have 
$\Br(\Gamma_i)=\Br(\Gamma)
\cap \Vert(\Gamma_i)$. }
\begin{proof}
Clearly, $\Br(\Gamma_i)\subset
\Br(\Gamma)\cap \Vert(\Gamma_i)$. 
Let us show the opposite inclusion. 
By symmetry, we can take $i=1$. 
According to Definition \ref{def:vertices}, 
for a non-rational vertex 
$v\in\Br(\Gamma)$ we have 
$v\in \Br(\Gamma_i)$ for $i=1,2$.
Suppose to the contrary that 
there is a rational vertex 
$v\in \Br(\Gamma)\cap \Vert(\Gamma_1)
\setminus\Br(\Gamma_1)$.
Then $\deg_{\Gamma}(v)\ge 3$,
$v$ is not contracted in $\Gamma_1$, and 
$\deg_{\Gamma_1}(v)\le 2$. 
It follows that  $p_1=p_1^v$ 
contracts at least $\deg_{\Gamma}(v)-2>0$ 
simple branches of $\Gamma$ at $v$, 
see Lemma \ref{contract-branch}(b). 
In particular, 
there is a simple branch $W_1$ of 
$\Gamma$ at $v$ 
contracted by $p^v_1$. 
By Claim 1(i), $v$  is contracted in $\Gamma_2$. 
Therefore, also $p_2^v$ contracts at least 
$\deg_{\Gamma}(v)-2>0$ branches 
of $\Gamma$ at $v$. So, there is
 a simple branch $W_2$ of $\Gamma$ at $v$ 
contracted in $\Gamma_2$. Due to Claim 1(ii), 
$p_1$ contracts $v$ too, contrary to 
our assumption that $v\in \Vert(\Gamma_1)$. 
\end{proof}
\noindent {\bf Claim 3.} \textit{We have $\Br(\Gamma_1)
=\Br(\Gamma_2)$. }
\begin{proof}
Assume to the contrary that $\Br(\Gamma_1)
\neq\Br(\Gamma_2)$. 
Using Claim 2, we can consider by symmetry that
there exists a rational vertex
$v\in \Br(\Gamma)\cap\Vert(\Gamma_1)
\setminus \Vert(\Gamma_2)$.  
Since $v$ is contracted in $\Gamma_2$,
some simple branch $W_2$ of $\Gamma$ at $v$
is contracted in $\Gamma_2$. 
According to Claim 1(i), $v$ is contracted in 
$\Gamma_1$.  
This contradicts our choice of $v\in\Vert(\Gamma_1)$. 
\end{proof}
Claims 2 and 3 prove the equalities in ($a$). 
To show the last assertion in ($a$), we recall
that the degree of a branch vertex $v$ 
of $\Gamma$ 
can drop only under
 a contraction of a simple branch of 
 $\Gamma$ at $v$, see Lemma 
\ref{contract-branch}(a). According to Claim 1(i), 
if a simple branch at $v\in B$ is contracted
under $p^v_1$ (resp., $p^v_2$),
then $v\notin \Vert(\Gamma_2)$ 
(resp., $v\notin \Vert(\Gamma_1)$). 
The latter contradicts our choice of 
$v\in B=\Br(\Gamma_1)=\Br(\Gamma_2)$. 
\qed$_{(a)}$.

\smallskip

($b$)  Let  $\mathcal{C}$ 
be a  connected component 
of $\Gamma\ominus B$ and 
$S_i=p_i(\mathcal{C})$ for $i=1,2$.
We proceed with the following claim.

\smallskip

\noindent {\bf Claim 4.} \textit{$S_i$ 
is a (nonempty) connected component of 
$\Gamma_i\ominus \Br(\Gamma_i)$  for $i=1,2$.}

\begin{proof}
Every vertex 
$v\in \Vert(\Gamma)\setminus \Vert(\mathcal{C})$ 
linked to $\mathcal{C}$ 
belongs to $B=\Br(\Gamma_i)$ for $i=1,2$ by (a). 
In particular, 
$v$ is not contracted in $\Gamma_i$, $i=1,2$. 
It follows that there is a decomposition 
$p_i=p_i'\circ p_i''=p_i''\circ p_i'$, where 
$p_i'$ (resp. $p_i''$) contracts only 
vertices from $\Vert(\mathcal{C})$ (resp. from
 $\Vert(\Gamma)\setminus \Vert(\mathcal{C})$). 

Assume on the contrary that, say 
 $p_1'$ 
contracts $\mathcal{C}$. Then $S_2\neq\emptyset$
due to the relative minimality assumption.
 Since $\mathcal{C}$ is contractible, 
so is $S_2=p_2(\mathcal{C})$, see 
Remark \ref{rem:induced-contr}.2. 
The latter contradicts the minimality of $\Gamma_2$.
Thus, $S_i\neq\emptyset$ for $i=1,2$. 

By the argument above,
every vertex of 
$\Vert(\Gamma_i)\setminus\Vert(S_i)$
linked to a vertex of $S_i$ belongs to 
$\Br(\Gamma_i)$.
Since $S_i$ is connected, it follows that $S_i$ 
is a connected component of 
$\Gamma_i\ominus \Br(\Gamma_i)$.
\end{proof}
Let us return to the proof of (b).
Since $B=\Br(\Gamma_i)$, $S_i$ contains 
no branch vertex of $\Gamma_i$. 
According to Claim 4, $S_i$ 
is a segment of $\Gamma_i$. 
To show the converse, take a segment 
$S_i$ of $\Gamma_i$. By definition, $S_i$ 
is a connected component of 
$\Gamma_i\ominus \Br(\Gamma_i)$. 
Therefore, $\mathcal{C}:=p_i^*(S_i)$ is 
a connected component of $\Gamma\ominus B$,
see Remark \ref{rem:preimage}.

Thus, $p_i$ induces 
a one to one correspondence 
between the segments of $\Gamma_i$ 
and the connected components of 
$\Gamma\ominus B$.
This yields a one to one correspondence 
between 
the sets of segments of 
$\Gamma_1$  and $\Gamma_2$. 

The number of tips of $S_i$ linked to 
$\Br(\Gamma_i)$ equals the number
 of vertices of $\mathcal{C}$ linked to $B$.
So, it is the same for $i=1,2$. 
This number equals $0$ if $S_i$ 
($\mathcal{C}$, respectively) 
is a connected component of $\Gamma_i$ 
(of $\Gamma$,  respectively), equals $1$ if $S_i$ 
is a non-isolated extremal linear segment of 
$\Gamma_i$, and equals $2$ if $S_i$ is 
an inner linear segment of $\Gamma_i$.
Therefore, the above correspondence 
preserves the subsets of circular, inner linear 
and extremal linear segments. \qed$_{(b)}$.
\smallskip

($c_1$) 
Every non-rational vertex 
of $\Gamma$ belongs to $B=\Br(\Gamma_i)$, 
see Definition \ref{def:vertices}. 
Given a vertex $v\in \Br(\Gamma)\setminus B$, $v$ 
is rational and there are simple branches $W_1$ and 
$W_2$ of $\Gamma$  at $v$
contracted by $p_1^v$ and $p_2^v$, 
respectively, see Lemma \ref{contract-branch}(b). 
According to Claim 1(i) and (ii), 
$v$ has no incident loop and
is contracted by both $p_1$ and $p_2$.
Besides, $W_1\neq W_2$
and all branches of $\Gamma$ at $v$ 
are simple. 

Since $W_i$ is contracted in 
$\Gamma_i$, it contains no vertex of 
$B=\Br(\Gamma_1)=\Br(\Gamma_2)$, see (a).
Therefore, for $j\neq i$ the image $p_j(W_i)$ 
in $\Gamma_j$ contains no branch vertex
of $\Gamma_j$.  
It follows that $p_j(W_i)$ 
is contained in a segment, say, 
$S_i$ of $\Gamma_j$. 

According to Lemma \ref{lem:contract-segment}(b)
the contractible tree $W_i$ 
has a contractible extremal linear branch, say $L_i$. 
Let $v_i$ be a $(-1)$-vertex of $L_i$. 
By the relative minimality assumption, $v_i$ 
is not contracted in $\Gamma_j$ for $j\neq i$. 
In particular, $p_j(W_i)$ is nonempty.
For $j\neq i$ 
the image $p_j(L_i)$ contains 
a tip $t$ of $\Gamma_j$, see 
Lemma \ref{lem:contract-segment}(a). 
Since $t\in S_i$,
it follows that $S_i$ is an extremal 
linear segment of $\Gamma_j$.

The weight of $v_i$ in $\Gamma$ can 
only increase under the contraction $p_j$, 
and it must increase indeed because 
of the minimality of $\Gamma_j$. Therefore,
the weight of $v_i$ in $\Gamma_j$ is $\ge 0$, 
and so
the segment $S_i$ is not admissible. 
\qed$_{c_1}$
\smallskip

($c_2$) We use the notation from ($c_1$). 
According to Claim 1(ii),
the branches of $\Gamma$
at $v$ contracted by $p_1^v$ and 
those contracted by $p_2^v$ 
are distinct.  In total, 
there are at least 
$2\deg_{\Gamma}(v)-4\le \deg_{\Gamma}(v)$  
contractible simple
branches of $\Gamma$ at $v$, see Lemma 
\ref{contract-branch}(b). 
It follows that $3\le \deg_{\Gamma}(v)\le 4$ and
there are exactly $\deg_{\Gamma}(v)$ (simple) branches
of $\Gamma$ at $v$. 

(i) Suppose that  $\deg_{\Gamma}(v)=3$.
By (a), $p_i^v(v)$
is not a branch point of $\Gamma_i^v$  for $i=1,2$.
Hence $p^v_i$ contracts at least one branch, say $W_i$ 
of $\Gamma$ at $v$.
By the preceding, $W_1\neq W_2$.

(ii) Suppose now that  $\deg_{\Gamma}(v)=4$.
Since $p_i^v$ contracts at least 2 
branches for $i=1,2$
and these 4 branches are distinct,
two branches, say $W_1$ and $W_1'$  
of $\Gamma$
at $v$ are contracted by $p_1^v$ 
and two others, 
say $W_2$ and $W_2'$  
are contracted by $p_2^v$.

According to ($c_1$), for $j\neq i$
the images
$p_j(W_i)$ and $p_j(W_i')$ are contained 
in extremal linear segments of $\Gamma_j$.
Hence, these images are linear chains, and 
their union equals $\Gamma_j$. 
None of these chains contain a branch point of 
$\Gamma_j$, see the proof of ($c_1$). Therefore,
both $\Gamma_1$ 
and $\Gamma_2$ are linear  graphs. 
Since the $(-1)$-vertices
of $W_i$ and $W_i'$ are not contracted in $\Gamma_j$,
their images in $\Gamma_j$ have non-negative weights.
So, $\Gamma_1$ 
and $\Gamma_2$ are not admissible.

Suppose to the contrary that there exists 
a second branch point, say, $v'\neq v$ 
of $\Gamma$,
and let $V_1,\ldots,V_k$, $k\ge 3$ be the branches of 
$\Gamma$ at $v'$. We may assume that $v'\in W_1$
and $W_1'\subset V_1$. 
Then we have $V_2\cup\cdots\cup V_k\subset W_1$. 

Since the branch $W_1'$ is contractible and 
is contracted under $p_1$, it contains a 
$(-1)$-vertex that survives the contraction $p_2$.
Hence the branch $V_1$ of $\Gamma$ at $v'$
cannot be contracted under $p_2$. 

Since $V_2,\ldots, V_k$ are contracted 
in $\Gamma_1$ together with $W_1$,
no $(-1)$-vertex of $V_2\cup\cdots\cup V_k$ 
can be contracted under 
$p_2$.  It follows that there is an 
isomorphism of weighted graphs 
$V_i\cong p_2^{v'}(V_i)$ for $i=2,\ldots,k$. 
Since also $V_1$ is not contracted under 
$p_2^{v'}$, the image of $v'$ in $p_2^{v'}(\Gamma)$
is a branch vertex of degree $k\ge 3$. Hence, 
it cannot be contracted on the 
next step of contraction $p_2$.  It follows that 
$p_2=p_2^{v'}$, and so $v'\in \Br(\Gamma_2)$.
However, by the preceding $\Gamma_2$ is a linear graph.
This gives a contradiction. 
\qed$_{c_2}$
\end{proof}
\begin{remark} For a vertex  
$v \in  \Br(\Gamma)\setminus B$ 
with $\deg_{\Gamma}(v)=3$  
it can happen that both $p_1^v$ 
and $p_2^v$ contract exactly 
one branch  of $\Gamma$
at $v$, as it occurs in Example 
\ref{exa:non-linear domination} below. 
It can also happen that, say
$p_1^v$ contracts a single (non-linear) branch 
of $\Gamma$
at $v$, while two other branches of $\Gamma$
at $v$ are contracted by 
$p_2^v$, see Example \ref{exa:nontip}.
\end{remark}
\begin{example}\label{exa:non-linear domination} 
We present  on Figure \ref{fig:graphs-1} a graph $\Gamma$ 
that dominates two minimal linear graphs $\Gamma_1$ and 
$\Gamma_2$. 
 For $i=1,2$ the birational morphism $p_i$ 
contracts the branch $W_i$ of $\Gamma$ 
and the vertex $v$. We have $B=\emptyset$ 
and $\deg_{\Gamma}(v)=3$.
The simple branch $T$ of $\Gamma$ at $v$ 
that consists of 
a single vertex $a$ is not contractible. In fact, it can 
be replaced by any minimal weighted graph $T$  
linked to $v$ by a single edge.
\begin{figure}[h]\label{fig:graphs}
\begin{tabular}{l@{\hspace{10pt}}l@{\hspace{10pt}}l}
    \pgfdeclarelayer{background}
		\pgfsetlayers{background,main}
    \begin{tikzpicture}[scale=1.0]
      
       \node[] (G) at (2.7,1) {$\Gamma\colon$};
       \node[vertex] (a) at (3.1,1) {};
       \node[] (A) at (3.1,0.7) {\footnotesize $a$};
       \node[] (0) at (3.1,1.3) {\footnotesize $0$};
       \node[vertex] (v) at (4.1,1) {};
       \node[] (V) at (4.1,0.7) {\footnotesize $v$};
        \node[] (-3) at (4,1.3) {\footnotesize $-3$};
        \node[vertex] (b1) at (5.1,1.6) {};
        \node[] (B1) at (5.1,1.3) {\footnotesize $v_1$};
       \node[] (-1) at (5.1,1.9) {\footnotesize $-1$};
        \node[vertex] (b2) at (6.1,2.1) {};
       \node[] (B2) at (6.1,1.8) {\footnotesize $b_1$};
        \node[] (-2) at (6.1,2.4) {\footnotesize $-2$};
        \node[vertex] (c1) at (5.1,0.4) {};
        \node[] (C1) at (5.1,0.1) {\footnotesize $v_2$};
       \node[] (-1) at (5.1,0.7) {\footnotesize $-1$};
       \node[vertex] (c2) at (6.1,-0.2) {};
       \node[] (C2) at (6.1,-0.5) {\footnotesize $b_2$};
       \node[] (-2) at (6.1,0.1) {\footnotesize $-2$};
        
        \node[] (W1) at (6.8,2) {\footnotesize $W_1$};
        \node[] (W2) at (6.8,-0.3) {\footnotesize $W_2$};
     
       \node[] (G1) at (9,2.9) {$\Gamma_1\colon$};
       \node[vertex] (a1) at (9.4,2.9) {};
       \node[] (A1) at (9.4,2.6) {\footnotesize $a$};
       \node[] (1) at (9.4,3.2) {\footnotesize $1$};
       \node[vertex] (b11) at (10.4,2.9) {};
       \node[] (B11) at (10.4,2.6) {\footnotesize $v_2$};
        \node[] (0) at (10.4,3.2) {\footnotesize $0$};
        \node[vertex] (b21) at (11.4,2.9) {};
        \node[] (B21) at (11.4,2.6) {\footnotesize $b_2$};
       \node[] (-2) at (11.4,3.2) {\footnotesize $-2$};
       
       \node[] (G2) at (9,-1) {$\Gamma_2\colon$};
       \node[vertex] (a2) at (9.4,-1) {};
       \node[] (A2) at (9.4,-1.3) {\footnotesize $a$};
       \node[] (1) at (9.4,-0.7) {\footnotesize $1$};
        \node[vertex] (c11) at (10.4,-1) {};
       \node[] (C11) at (10.4,-1.3) {\footnotesize $v_1$};
        \node[] (0) at (10.4,-0.7) {\footnotesize $0$};
        \node[vertex] (c21) at (11.4,-1) {};
        \node[] (C21) at (11.4,-1.3) {\footnotesize $b_1$};
       \node[] (-2) at (11.4,-0.7) {\footnotesize $-2$};
     
      \draw[edge] 
      (a)edge(v) (v)edge(b1) (b1)edge(b2) (v)edge(c1) (c1)edge(c2)
      (a1)edge(b11) (b11)edge(b21) 
      (a2)edge(c11) (c11)edge(c21);
      
      \draw[->](7.1,1.3) -- (8.7,2);
      \node[] (pi1) at (7.9,2) {\footnotesize $p_1$};
      \draw[->](7.1,0.7) -- (8.7,-0.4);
      \node[] (pi2) at (7.9,-0.3) {\footnotesize $p_2$};

         \end{tikzpicture}
\end{tabular}
  \caption{A weighted graph $\Gamma$  
  with branches $W_1$ and $W_2$ at the vertex $v$ that 
 are contracted 
 in $\Gamma_1$ and $\Gamma_2$, respectively. }
  \label{fig:graphs-1}
\end{figure}
\end{example}
\begin{example}\label{exa:4-branches} 
Figure \ref{fig:graphs-next} presents a graph $\Gamma$ 
that dominates two minimal linear graphs $\Gamma_1$ and 
$\Gamma_2$. 
The birational morphism $p_1$ (resp. $p_2$)
contracts the vertices  $b_1$  and $b_2$ (resp. $a_1$ and $a_2$) of $\Gamma$ 
and the vertex $v$. We have $B=\emptyset$ and $\deg_{\Gamma}(v)=4$.

\begin{figure}[h]\label{fig:graphs-next}
\begin{tabular}{l@{\hspace{10pt}}l@{\hspace{10pt}}l}
    \pgfdeclarelayer{background}
		\pgfsetlayers{background,main}
    \begin{tikzpicture}[scale=1.0]
      
       \node[] (G) at (2.7,1) {$\Gamma\colon$};
       
       \node[vertex] (a1) at (3.1,1.7) {};
       \node[] (-1) at (3.1,1.9) {\footnotesize $-1$};
       \node[vertex] (a2) at (3.1,0.4) {};
       \node[] (-1) at (3.1,0.7) {\footnotesize $-1$};
       \node[] (A1) at (3.1,1.3) {\footnotesize $a_1$};
        \node[] (C1) at (3.1,0.1) {\footnotesize $a_2$};
       \node[vertex] (v) at (4.1,1) {};
       \node[] (V) at (4.1,0.7) {\footnotesize $v$};
        \node[] (-3) at (4,1.3) {\footnotesize $-3$};
        \node[vertex] (b1) at (5.1,1.6) {};
        \node[] (B1) at (5.1,1.3) {\footnotesize $b_1$};
       \node[] (-1) at (5.1,1.9) {\footnotesize $-1$};
        \node[vertex] (b2) at (5.1,0.4) {};
        \node[] (B2) at (5.1,0.1) {\footnotesize $b_2$};
       \node[] (-1) at (5.1,0.7) {\footnotesize $-1$};
       \node[] (G1) at (9,2.9) {$\Gamma_1\colon$};
       \node[vertex] (a10) at (9.4,2.9) {};
       \node[] (A11) at (9.4,2.6) {\footnotesize $a_1$};
       \node[] (1) at (9.4,3.2) {\footnotesize $0$};
       \node[vertex] (b11) at (10.4,2.9) {};
       \node[] (B11) at (10.4,2.6) {\footnotesize $a_2$};
        \node[] (0) at (10.4,3.2) {\footnotesize $0$};
       
       \node[] (G2) at (9,-1) {$\Gamma_2\colon$};
       \node[vertex] (a20) at (9.4,-1) {};
       \node[] (A2) at (9.4,-1.3) {\footnotesize $b_1$};
       \node[] (1) at (9.4,-0.7) {\footnotesize $0$};
        \node[vertex] (c11) at (10.4,-1) {};
       \node[] (C11) at (10.4,-1.3) {\footnotesize $b_2$};
        \node[] (0) at (10.4,-0.7) {\footnotesize $0$};

      \draw[edge] 
      (a1)edge(v) (a2)edge(v) (v)edge(b1) (v)edge(b2)
      (a10)edge(b11) 
      (a20)edge(c11); 
      
      \draw[->](7.1,1.3) -- (8.7,2);
      \node[] (pi1) at (7.9,2) {\footnotesize $p_1$};
      \draw[->](7.1,0.7) -- (8.7,-0.4);
      \node[] (pi2) at (7.9,-0.3) {\footnotesize $p_2$};

         \end{tikzpicture}
\end{tabular}
  \caption{A weighted graph $\Gamma$  
  with $4$ branches at the vertex $v$ that 
  are contracted, two by two, in
  $\Gamma_1$ and $\Gamma_2$, respectively.} 
  \label{fig:graphs-next}
\end{figure}
\end{example}
\begin{example}[\rm{see \cite[Example~2.6]{FKZ-graphs}}]
\label{exa:nontip} The graph

\vspace*{5mm}
$$
 \Gamma:\quad\qquad
 \cshiftup{-1}{a}\cshiftdown{-1}
 {b}\hspace{9.8mm}
 \nwlin\cou{-3}{v}\swlin\co{}
\lin\co{-2}\slin\cshiftdown{-2}{e} 
\lin\cou{-3}{v'} \solin \nolin\hspace{9.8mm}
\cshiftup{-1}{c}\cshiftdown{-1}{d}
\quad
$$
\vspace*{10mm}

\noindent admits three different contractions 
$p_i\colon \Gamma\to \Gamma_i$, $i=1,2,3$ 
to the minimal linear graphs 
$\Gamma_1=\Gamma_2:=[[0,0]]$ and 
$\Gamma_3:=[[0,-2]]$, respectively. The morphisms
$p_1$ and $p_2$ contract the non-linear
subgraphs $\Gamma\ominus \{a,b\}$ and
$\Gamma\ominus\{c,d\}$, 
respectively, 
and $p_3$ contracts two linear subgraphs of $\Gamma$ 
with tips $a,b$ and $c,d$,
respectively. 

The contractions $p_1$ and $p_3$
(resp. $p_2$ and $p_3$) fit in a diagram 
\eqref{triangular diagram} that is not relatively minimal, 
while the diagram 
\eqref{triangular diagram} with $p_1$ and $p_2$ is. 
In the latter case we have $B=\emptyset$ 
and $\Br(\Gamma)$ consists of $3$ vertices
of degree $3$, including $v$ and $v'$ 
of weight $-3$ and 
the third vertex, say $v_0$ of weight $-2$. 
The morphism $p^v_1$ contracts the branch 
$\Gamma\ominus \{a,b\}$ of $\Gamma$ 
at $v$, while $p^v_2$ contracts two other branches 
 of $\Gamma$ at $v$ that consist of the tips $a$ and $b$. 
The branch $T$ of 
$\Gamma$ at $v_0$ consisting
of the single vertex $e$ is not contractible. Nevertheless, 
$T$ is contracted by the $p_i$ for $i=1,2$, hence
$e\notin p_1^{-1}(\Gamma_1)\cup 
p_2^{-1}(\Gamma_2)$.
\end{example}
In Proposition \ref{prop:4branches} below
we provide
a description of all possible graphs $\Gamma$ as in
Graph Lemma \ref{graph-lemma}($c_2$)(ii).  
We use the following notation and results.
\begin{nota}
For a sequence of integer numbers 
$k_1,\ldots,k_n$ 
with $k_1,\ldots,k_{n} \ge 2$
we let $[k_1, \ldots, k_n]$ 
be the minus
continued fraction defined inductively via
$[k_1] = k_1$ and $[k_1,\ldots,k_n] 
= k_1 - \frac{1}{[k_2,\ldots,,k_n]}$.
If $[k_1,\ldots, k_n]=m/e$, where
$0< e<m$ and $\gcd(e,m)=1$, then
we let
\begin{equation}\label{eq:boxes}
\qquad\boxo{e/m}\qquad =\quad
[[-k_1,\ldots,-k_n]]\quad\text{ and }\qquad
\quad\boxo{(e/m)^*} \qquad=\quad 
[[-k_n,\ldots,-k_1]].
\end{equation}
For the subgraph 
$L'=[[-k_2,\ldots,-k_n]]$ of  
$L=[[-k_1,\ldots,-k_n]$ we have 
$m = {\rm discr}(L)$ 
and $e = {\rm discr}(L')$, 
see \cite[Lemma 4.4]{FKZ-graphs}.
\end{nota}
\begin{lemma}[\rm{\cite[Proposition 4.9(b)]
{FKZ-graphs}}]
\label{3prop} 
A contractible weighted linear graph $L$
has a unique $(-1)$-vertex, 
and the weights of other 
vertices are $\,\le -2$.

Let now $L=[[a_1,\ldots,a_n]]$ be a weighted 
linear graph with $a_{k}=-1$ for some
$k\in\{1,\ldots,n\}$ and $a_i\le-2$ $\forall i\neq k$. 
If $1<k<n$, then $L$
is contractible if and only if ${\rm discr} (L)=1$ 
or, equivalently,
\begin{equation}\label{eq:equality}
\frac{e_1}{m_1} + \frac{e_2}{m_2}
=1-\frac{1}{m_1m_2}\,,
\end{equation}
where
\[\frac{m_1}{e_1}=[-a_{k-1},\ldots,-a_1]
\quad\mbox{and}\quad
\frac{m_2}{e_2}=[-a_{k+1},\ldots,-a_n]\,\] 
with $e_i, \,m_i>0$ and
$\gcd (e_i,m_i)=1$ for $i=1,2$. 

If $k=1$ (resp. $k=n$), 
then $L$
is contractible if and only if $L=[[-1,-2,\ldots,-2]]$ 
(resp. $L=[[-2,\ldots,-2,-1]]$).
\end{lemma}
\begin{proposition} \label{prop:4branches}$\,$
\begin{itemize}\item[(a)]
Consider a relatively minimal diagram
\eqref{triangular diagram} with a connected 
weighted graph $\Gamma$ and
minimal weighted graphs $\Gamma_1$ and $\Gamma_2$
such that $\Gamma$ has a vertex  
$v \in  \Br(\Gamma)\setminus B$ of degree 4. 
Let  $L_{3-i}= p_i^v(\Gamma)$ for $i=1,2$ be 
the induced weighted linear subgraph
of $\Gamma$ spanned by $W_i, W_i'$ and $v$, 
where the weight of $v$ is replaced by $-1$. 
Then $L_{3-i}$
has the form 
\begin{equation}\label{eq:lin}
L_{3-i}=\qquad\quad
\boxo{(e_{1,i}/m_{1,i})^*}\llin\cou{}{-1}\llin
\boxo{e_{2,i}/m_{2,i}}\llin\cou{v}{-1}\llin
\boxo{(e_{3,i}/m_{3,i})^*}\llin\cou{}{-1}
\llin\boxo{e_{4,i}/m_{4,i}}
\end{equation}
where 
\begin{equation}\label{eq:4-conditions}
\,\,\,\qquad\frac{e_{j,i}}{m_{j,i}} + \frac{e_{j+1,i}}{m_{j+1,i}}
=1-\frac{1}{m_{j,i}m_{j+1,i}}\quad\text{ for }\quad i=1,2 
\quad\text{and}\quad j=1,2,3.
\end{equation}

\item[(b)]
Conversely, let $\Gamma$ 
be a weighted graph 
with a unique branch point $v$
of degree $4$ and with four contractible
 linear branches  at $v$, say, 
$W_1, W_1', W_2, W_2'$.
Supppose that for $i=1,2$, 
after the contraction 
of $W_i$ and $W_i'$ the weight of $v$ 
becomes equal to $-1$ 
and the resulting graph $L_{3-i}$
satisfies \eqref{eq:lin} and \eqref{eq:4-conditions}. 
Then $\Gamma$  fits in 
a relatively minimal diagram 
\eqref{triangular diagram}
with non-admissible minimal linear graphs  
$\Gamma_1$ and 
$\Gamma_2$ 
and with birational morphisms 
$p_i\colon\Gamma\to\Gamma_i$ 
such that $p_i^v$ 
contracts $W_i$ and $W_i'$
for $i=1,2$.
\end{itemize}
\end{proposition}
\begin{proof} (a)
After the contraction  of $W_2$ and $W_2'$ 
under $p_2^v$, 
the weight of $v$ becomes $-1$, because $v$ is 
contracted on the next step, see ($c_1$) 
of Graph Lemma \ref{graph-lemma}.
The image $L_1$ of $\Gamma$ under $p_2^v$
is a linear graph with contractible linear branches 
$W_1$ and $W_1'$ at $v$ that remain unchanged
under $p_2^v$, see $(c_1)$. Since a contractible linear graph 
has a unique $(-1)$-vertex, $L_1$ has the 
desired form \eqref{eq:lin}. 
Both $W_1$ and $W_1'$
are contracted under $p_1$, hence we have 
the equalities in \eqref{eq:4-conditions} 
for $i=1$, $j=1$ and $j=3$. 
By the relative minimality assumption, the contraction 
$p_2$ of $\Gamma$ to a minimal graph $\Gamma_2$ 
does not contract the $(-1)$-vertices of 
$W_1$ and $W_1'$, but changes their weights. 
Therefore, the connected subgraph of $L_1$ 
that connects
the leftmost and rightmost $(-1)$-vertices of $L_1$
is also contractible. By  Proposition \ref{3prop}
 this implies the last equality 
in \eqref{eq:4-conditions}
for $i=1$ and $j=2$. 
By symmetry, the same conclusions
holds for the linear graph $L_2$.

(b) The converse assertion follows easily
from Lemma \ref{3prop}.
\end{proof}
\begin{remark}
Consider a normal affine surface $Y$ 
and its NC-completion $(X,D)$ 
with dual graph $\Gamma(D)$.
Suppose $\Gamma(D)$ satisfies 
the assumptions of 
Proposition \ref{prop:4branches}(a).  
According to Graph Lemma  
\ref{graph-lemma}($c_2$)(ii),
in this case the dual graph of a minimal
NC-completion of $Y$ is linear. 
 By Gizatullin's characterization 
of generic flexibility (\cite{Giz71b}) 
extended by Dubouloz (\cite{Dub04}),
$Y$ is
generically flexible, see the definition in
the  Introduction. 
In particular, $Y$ admits two 
$\Ga$-actions with distinct general orbits. 

Moreover, $Y$ 
admits an NC-completion $(X_0,D_0)$
with 
an irreducible and smooth boundary divisor $D_0$.
Indeed, it suffices to contract the four branches 
of $D$ at the branch vertex $v$ 
of $\Gamma(D)$, see \ref{graph-lemma}($c_2$)(ii). 
If $Y$ is smooth, then by \cite[Proposition~1]{Gi1}
$(X_0,D_0)$
is isomorphic to one of the following pairs:
\begin{equation}\label{eq:Ciz}
(\PP^2,L),\quad (\PP^2,C)\quad\text {and } 
(\mathbb{F}_n, S),\quad n\ge 0,
\end{equation}
where $L$ (resp. $C$) is a line (resp. a conic) 
in $\PP^2$ and $S$ is 
an ample section of the Hirzebruch surface 
$\mathbb{F}_n\to\PP^1$.
By a Danilov-Gizatullin theorem 
\cite[Part II, Theorem~5.8.1]{DG}, 
see also \cite{FKZ-DGthm},
two affine surfaces 
$Y=\mathbb{F}_n\setminus S$ and 
$Y'=\mathbb{F}_m\setminus S'$
are isomorphic if and only if $S^2={S'}^2$. 
Thus, up to isomorphism, 
the smooth affine surfaces that admit 
a minimal completion 
with the dual graph as in Proposition 
\ref{prop:4branches}(a) 
form an infinite countable set. 

Actually, every smooth affine surface 
from Gizatullin's list \eqref{eq:Ciz}, and, more generally, 
 every
normal affine surface $Y$ that can be
completed by a single smooth rational curve $D_0$,
admits also a completion as in 
 \ref{graph-lemma}($c_2$)(ii), that is, 
 with a dual graph $\Gamma$ that has 
 two pairs of contractible 
 linear branches at a single branch vertex $v$ 
 such that the corresponding minimal 
 graphs $\Gamma_i$, $i=1,2$ are linear and fit in a relatively 
 minimal diagram \eqref{triangular diagram}.
Indeed, let $D_0^2=k>0$. Take four distinct points 
$P_1,\ldots,P_4$ on $D_0$. 
Blowing up the $P_j$ and their infinitesimally near points 
successively
we can transform the pair $(X_0,D_0)$ into 
an SNC-pair  $(X,D)$
with $Y=X\setminus D$ such that the weighted dual graph 
$\Gamma(D)$ has a single branch vertex $v$ of degree 4, 
that corresponds to the proper transform 
of $D_0$ on $X$, and four
linear branches $W_1, W_1',W_2,W_2'$ at $v$ of the form
\[W_1\cong W_2=[[-1,-2,\ldots,-2]] \quad
\text{ of length}\quad k \quad \text{ and }\quad
W_1'\cong W_2'=[[-1]],\]
where all four $(-1)$-vertices are linked to $v$. 
After the contraction of all four branches, the weight 
of $v$ becomes equal to $k$. For $i=1,2$, 
after the contraction of two branches 
$W_i$ and $W_i'$ the weight of $v$ 
becomes equal to $-1$; the further blowing of $v$ down 
yields a minimal linear graph $\Gamma_i$. 
Hence, the graph $\Gamma=\Gamma(D)$
verifies the assumptions of 
Proposition \ref{prop:4branches}(a).
\end{remark}
\subsection{Birationally rigid graphs} 
\label{minimality-subsection}
\begin{definition}\label{def:bir-rigid}
A minimal weighted graph $\Gamma_1$ 
is said to be \textit{birationally rigid} if in any 
relatively minimal diagram 
\eqref{triangular diagram} 
with a minimal graph 
$\Gamma_2$, no vertex of 
$\Vert(\Gamma_1)\subset\Vert(\Gamma)$ 
is contracted in $\Gamma_2$.
In other words, 
$\Vert(\Gamma_1)\subset\Vert(\Gamma_2)$ in $\Gamma$.

We say that $\Gamma_1$ is \textit{admissible} 
if all its segments are admissible, 
see Definition \ref{def:admissible}.
\end{definition}
\begin{examples}\label{exa:couples} The graph 
$\Gamma_1=[[0,0]]$
is neither admissible, nor
birationally rigid, see Examples \ref{exa:4-branches} 
and \ref{exa:nontip}. 
The same holds for the circular graph 
$\Gamma_1=((3))$ 
with a single vertex $v$ of weight $3$ and a loop. 
Indeed, there is 
a relatively minimal diagram 
\eqref{triangular diagram}, where  $\Gamma$ is
the circular graph $((-1,-1))$ with 
two vertices $v_1$ and $v_2$. 
It dominates, for $i=1,2$,
the minimal graph
$\Gamma_i=((3))$  with a single vertex 
$v_{3-i}$
via the blowdown $p_i$ of $v_i$. 
\end{examples}
\begin{lemma}\label{lem:isoms} A minimal weighted 
graph $\Gamma_1$ is 
birationally rigid if and only if for 
every relatively minimal diagram
\eqref{triangular diagram} 
with a minimal graph $\Gamma_2$,
the birational morphisms $p_1$ and $p_2$ 
are isomorphisms.
\end{lemma}
\begin{proof}
The ``if'' part is immediate. Let us prove the ``only if'' part. 
Suppose on the contrary that $\Gamma_1$ is birationally rigid
and there exists a $(-1)$-vertex 
$v\in\Vert(\Gamma)\setminus\Vert(\Gamma_2)$. 
By the assumption of birational rigidity we have 
$v\in\Vert(\Gamma)\setminus\Vert(\Gamma_2)
\subset \Vert(\Gamma)\setminus\Vert(\Gamma_1)$.
It follows that $v$ is blown  down by 
both $p_1$ and $p_2$.
The latter contradicts the  relative 
minimality assumption.
Thus, $p_2$ is an isomorphism, and so 
$\Gamma\cong\Gamma_2$ is minimal. 
Therefore, $p_1$
is an isomorphism too. 
\end{proof}
We also have the following criterion 
of  birational rigidity;
cf. \cite[Corollaries A.3 and A.4]{FZ}. 
Notice that  our birationally rigid graphs correspond 
to absolutely minimal graphs of \cite{FZ}.
For the reader's convenience we provide a proof. 
\begin{proposition}\label{abs-graph} 
A minimal weighted graph $\Gamma_1$ 
 is birationally rigid 
if and only if it is admissible. 
\end{proposition}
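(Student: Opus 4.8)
The plan is to prove both implications of Proposition~\ref{abs-graph} by combining the Graph Lemma~\ref{graph-lemma} with the known fact (recalled in the Introduction and in \cite[Examples 2.11]{FKZ-graphs}) that a minimal linear segment is non-admissible precisely when it is birationally equivalent to a linear graph with a $(0)$-vertex at a tip. For the \emph{forward} direction I would argue the contrapositive: suppose $\Gamma_1$ is minimal but has a non-admissible segment $S$. If $S$ is extremal, then $S$ is birationally equivalent to a linear graph carrying a $(0)$-tip, and performing this transformation only inside $S$ (using that the neighbour of $S$ in $\Gamma_1$, if any, is a branching vertex unaffected by the transformation) produces a minimal graph $\Gamma_2$ not isomorphic to $\Gamma_1$, fitting into a relatively minimal diagram~\eqref{triangular diagram} in which some vertex of $S$ is contracted. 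Hence $\Gamma_1$ is not birationally rigid. If $S$ is an inner linear or a circular segment, one still has to rule it out; here I would invoke the Graph Lemma: in a relatively minimal diagram the branching set is preserved and the correspondence of part~(b) matches segments type-for-type, so an inner/circular non-admissible segment can likewise be replaced by a non-isomorphic minimal one without touching the rest of the graph, again contradicting birational rigidity. So non-admissibility of \emph{any} segment breaks rigidity, i.e. birationally rigid $\Rightarrow$ admissible.

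For the \emph{converse} — admissible $\Rightarrow$ birationally rigid — suppose $\Gamma_1$ is admissible and consider any relatively minimal diagram~\eqref{triangular diagram} with minimal $\Gamma_2$. By Graph Lemma~\ref{graph-lemma}(c), if $\Br(\Gamma)\neq B$ then $\Gamma_1$ (and $\Gamma_2$) would contain a non-admissible extremal linear segment, contradicting admissibility. Therefore $\Br(\Gamma)=B=\Br(\Gamma_1)=\Br(\Gamma_2)$, and by part~(a) the branching vertices are $p_i$-rigid, so no vertex of $B$ is contracted. It remains to show that no vertex of a segment of $\Gamma_1$ is contracted either. Fix a connected component $U$ of $\Gamma\setminus B$; by part~(b), $p_1$ contracts $U$ onto a segment $S_1$ of $\Gamma_1$ and $p_2$ contracts $U$ onto the corresponding segment $S_2$ of $\Gamma_2$, both admissible. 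The restriction of $p_2\circ p_1^{-1}$ gives a birational transformation $S_1\dasharrow S_2$ between two admissible (linear or circular) segments that is dominated by $U$. Because an admissible linear segment has all weights $\le-2$, hence negative-definite intersection form, and likewise for an admissible circular segment (whose only nontrivial birational self-maps are, up to isomorphism, the identity — this is the classical rigidity of admissible chains and cycles, cf. \cite[Examples 2.11, Corollary~2.13]{FKZ-graphs}), such a transformation forces $U=S_1=S_2$, i.e. $p_1$ and $p_2$ are isomorphisms on $U$ and no vertex of $S_1$ is contracted. Running this over all components $U$ shows no vertex of $\Vert(\Gamma_1)$ is contracted, so $\Gamma_1$ is birationally rigid.

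The main obstacle I anticipate is the last step of the converse: controlling the birational transformation between two admissible segments that is dominated by a connected piece $U$ of the intermediate graph. For \emph{linear} admissible segments this is the standard uniqueness of zero-continued-fraction (Hirzebruch–Jung) presentations: a negative-definite linear chain with all weights $\le-2$ admits no birational transformation to another minimal linear graph other than an isomorphism, because any blowup would create a $(-1)$-vertex whose later contraction is forced and round-trips to the same chain; I would phrase this via the discriminant/intersection-form invariants of Corollary~\ref{cor:inertia-indices} together with the relative-minimality condition forbidding a vertex of $U$ to be contracted by both $p_1$ and $p_2$. The \emph{circular} case is analogous but needs the separate known fact that an admissible cyclic graph (all weights $\le-2$, or a single $\le 2$-vertex with a loop) is birationally rigid among cyclic minimal graphs. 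Once these two combinatorial rigidity facts are in hand, the Graph Lemma does all the structural work and the proof is just bookkeeping over the segment decomposition.
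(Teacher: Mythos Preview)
Your converse direction (admissible $\Rightarrow$ birationally rigid) is correct in outline but far more elaborate than needed, and it leans on external rigidity results for admissible chains and cycles. The paper's argument is a one-line weight contradiction: if $\Gamma_1$ is admissible but not rigid, let $v$ be the \emph{first} $(-1)$-vertex of $\Gamma$ blown down by $p_2$; by relative minimality $v$ is not contracted by $p_1$, so $v\in\Vert(\Gamma_1)$, and by Graph Lemma~\ref{graph-lemma}(a) $v$ lies on a segment of $\Gamma_1$. But then the weight of $v$ in $\Gamma$ is $\le$ its weight in $\Gamma_1$, which is $\le -2$ by admissibility, contradicting $w(v)=-1$. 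No segment-by-segment analysis, no appeal to Hirzebruch--Jung uniqueness, no separate circular case is required.

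Your forward direction (non-admissible $\Rightarrow$ not rigid) has a genuine gap. You propose to transform a non-admissible segment $S$ to a form with a $(0)$-tip and claim this yields a minimal $\Gamma_2\not\cong\Gamma_1$ in a relatively minimal diagram with some vertex of $S$ contracted. But this fails already for $\Gamma_1=[[0]]$: the segment \emph{already has} a $(0)$-tip, so your transformation is the identity and contracts nothing---yet $[[0]]$ is not birationally rigid (take $\Gamma=[[-1,-1]]$ with $p_1,p_2$ contracting different ends). More generally, ``non-isomorphic minimal model'' is neither what you need (rigidity is about vertices being contracted, not isomorphism type) nor always available (Proposition~\ref{prop:counter-ex} lists several non-admissible graphs with a \emph{unique} minimal model). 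The paper instead gives an explicit uniform construction: pick a vertex $v$ of $S$ with weight $a\ge 0$ (or $a\ge 3$ in the single-loop case), perform $a{+}1$ successive blowups near $v$ to reduce its weight to $-1$, then blow down $v$ and continue to a minimal $\Gamma_2$. One then checks directly that the only $(-1)$-vertex of $\Gamma$ contracted by $p_1$ (namely the last created vertex $e_k$) acquires weight $\ge 0$ after contracting $v$, hence survives in $\Gamma_2$; this verifies relative minimality and exhibits $v\in\Vert(\Gamma_1)$ as contracted. You need this explicit step, or something equivalent, to close the argument.
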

\begin{proof} Assume to  the contrary that  
all segments of $\Gamma_1$ 
are admissible, but
$\Gamma_1$ is not  birationally rigid. Then there is
a relatively minimal diagram \eqref{triangular diagram} 
with a minimal graph $\Gamma_2$ and a 
vertex of $\Gamma_1$ that is 
contracted in $\Gamma_2$. 
Since $p_2$ is not an isomorphism, there is 
a $(-1)$-vertex $v\in\Vert(\Gamma)$
 blown down the first under the morphism 
$p_2\colon\Gamma\to \Gamma_2$. 
Due to the relative minimality of  
diagram \eqref{triangular diagram}, 
$v$ is not contracted under 
$p_1\colon\Gamma\to\Gamma_1$, 
that is,  $v\in \Vert(\Gamma_1) \subset\Vert(\Gamma)$. 

Since $v$ is at most linear vertex of $\Gamma$
and $\Br(\Gamma_1)\subset \Br(\Gamma)$,
there is a segment $S_1$ 
of $\Gamma_1$ that contains $v$.
Let $w_1(v)$ be the weight of $v$ in $\Gamma_1$ 
and $w(v)$ be its weight in $\Gamma$.  
Clearly,  $-1=w(v)\le w_1(v)$. 
If the admissible segment $S_1$  
is either linear of any length, 
or circular with at least two vertices, 
then we have $-1=w(v)\le-2$, 
which gives a contradiction. 

Let now $S_1$ be a circular segment of $\Gamma_1$ 
with a single 
vertex $v$. Since $S_1$ is admissible
we have $w_1(v)\le 2$, 
see Definition \ref{def:admissible}. Let $S$ 
be the total preimage
of $S_1$ in $\Gamma$. 
Assume that $S$ 
has at least two vertices. 
Since $v$ is at most linear in $\Gamma$,
 no outer blowup was done at $v$ under $p_1^{-1}$.
If the loop of $S_1$ at
$v$
was blown  under $p_1^{-1}$ up, 
then again $-1=w(v)\le -2$ hold, a contradiction.
 Otherwise $S$ contains a single vertex 
$v$ and has an incident 
loop in $\Gamma$, hence $v$ cannot be contracted 
in $\Gamma_2$.
Once again, this gives  a contradiction. 

To show the converse assume that $\Gamma_1$ 
has a non-admissible
segment $S_1$.  
Both $\Gamma_1$ and $S_1$ are minimal, 
so they are not contractible. 
Being non-admissible  $S_1$ has a vertex $v$ of
weight $a$, where $a\ge 0$ if $S_1$ is different 
from a circular segment with a single vertex and 
$a\ge 3$ otherwise. Let us construct a relatively 
minimal diagram \eqref{triangular diagram} 
with a minimal graph $\Gamma_2$ such that 
$v$ is contracted in $\Gamma_2$.

Blowing $\Gamma_1$ up 
successively we can reduce the weight of $v$ to $-1$ 
while keeping it at most linear. In more detail, first 
we blowup at an edge $ [u,v]$
incident to $v$ in $\Gamma_1$ unless $\Gamma_1$ 
consists of a single isolated vertex $v$; in the latter 
case we start with an outer blowup at $v$. Let $e_1$ 
be the $(-1)$-vertex created on the first step. On the second 
step, we blowup the edge $[e_1,v]$ creating 
a new vertex $e_2$, etc. On
 the $i$th step we blowup 
the edge $[e_{i-1},v]$ by adding a new vertex $e_i$. 
After 
$k>0$ successive blowups, where $k=a+1$  if $S_1$ 
is different from a circular segment with a single vertex 
and $k=a-2$ otherwise, we obtain a weighted graph 
$\Gamma$ with  the vertices  $e_1,\ldots,e_k$ 
appearing in this order under our procedure.
For $i=1,\ldots,k-1$ the weight of $e_i$ in $\Gamma$ 
equals $-2$ while $e_k$ and $v$ are 
at most linear $(-1)$-vertices of $\Gamma$. 
Notice that $e_k$ is the unique $(-1)$-vertex 
of $\Gamma$ blown down under the contraction 
$p_1\colon \Gamma\to \Gamma_1$ of $e_k,\ldots,e_1$ 
in this order. 

 Starting with the blowdown of $v$ in $\Gamma$ 
 we continue to blow down
the new at most linear $(-1)$-vertices 
until we reach 
a minimal graph $\Gamma_2$; this yields a birational 
morphism $p_2\colon \Gamma\to \Gamma_2$. 
The graph $\Gamma$ and the minimal graphs 
$\Gamma_i$, $i=1,2$ along with the contractions
$p_i\colon\Gamma\to\Gamma_i$ form 
a diagram \eqref{triangular diagram} 
(cf. e.g. Example \ref{exa:couples}). 

 We claim that this diagram is relatively minimal.
 Indeed, after the first blowdown of $v$ under $p_2$ 
 the weight
 of $e_k$ becomes $0$ and remains non-negative 
 during the successive blowdowns which form $p_2$. 
 Thus,  the only $(-1)$-vertex  $e_k$ of $\Gamma$ 
 blown under $p_1$ down is not blown down under $p_2$. 
This proves our claim. Since the vertex $v$ of $\Gamma_1$ 
 is contracted in $\Gamma_2$, the graph
$\Gamma_1$ is not birationally rigid, 
see Definition \ref{def:bir-rigid}. 
\end{proof}
Summarizing we get the following proposition.
\begin{proposition}\label{cor:graph} Assume 
we are given a relatively minimal diagram 
 \eqref{triangular diagram}  with minimal weighted 
 graphs $\Gamma_1$ and $\Gamma_2$ dominated 
 by a graph $\Gamma$. 
 Then the following hold.
\begin{enumerate}\item[$1.$] The birational map 
$\phi:=p_2\circ p_1^{-1}\colon \Gamma_1
\dasharrow\Gamma_2$ 
induces
\begin{itemize}
\item[(a)] a bijection between the sets 
of branch vertices 
$\Br(\Gamma_1)\simeq \Br(\Gamma_2)$;
\item[(b)] a bijection between
the sets of segments of $\Gamma_1$ and $\Gamma_2$
that preserves the subsets of  admissible segments, 
of inner (extremal, respectively) linear segments 
and of circular segments;
\item[(c)] isomorphisms between every pair of 
corresponding admissible linear (resp., admissible circular) 
segments;
\item[(d)] an inner birational transformation between 
every pair of corresponding non-admissible inner
linear segments;
\item[(e)]  a birational transformation between every pair 
of corresponding non-admissible extremal linear segments.
\end{itemize}
\item[$2.$]  If the $\Gamma_i$ 
in diagram \eqref{triangular diagram}  
are circular graphs, 
then $\Gamma$
is also circular and the birational morphisms $p_i$ are inner. 
\item[$3.$]  If all extremal linear segments in either 
$\Gamma_1$ or $\Gamma_2$ 
are admissible, then the birational transformation 
$\phi=p_2\circ p_1^{-1}\colon\Gamma_1\dasharrow\Gamma_2$ 
is inner  and induces a bijection between 
the sets of tips 
$\Tip(\Gamma_1)\simeq \Tip(\Gamma_2)$.
\item[$4.$] If $\phi$ is inner and the $\Gamma_i$ 
are linear graphs, then  also $\Gamma$ is a linear graph. 
\item[$5.$] If $\phi$ is inner and the $\Gamma_i$ 
are linear graphs with two vertices, then 
the $p_i\colon\Gamma\stackrel{\cong}
{\longrightarrow}\Gamma_i$ are isomorphisms 
of weighted graphs. 
\end{enumerate}
\end{proposition}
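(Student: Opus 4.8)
The statement is Proposition \ref{cor:graph}, and I would prove its five parts essentially in the order given, since each builds on the Graph Lemma and on Proposition \ref{abs-graph}. The overarching strategy is: everything reduces to the one-to-one correspondences between branching sets and between segments established in Graph Lemma \ref{graph-lemma}(a)--(b), together with a careful analysis of what $p_1$ and $p_2$ do segment-by-segment. I would first observe that, by Graph Lemma \ref{graph-lemma}(a), identifying $\Vert(\Gamma_i)$ with subsets of $\Vert(\Gamma)$ gives $B:=\Br(\Gamma_1)=\Br(\Gamma_2)=\Br(\Gamma)\cap\Vert(\Gamma_1)=\Br(\Gamma)\cap\Vert(\Gamma_2)$, and that the degrees of vertices of $B$ are unchanged by $p_1,p_2$. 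This immediately gives part 1(a): $\phi'=p_2\circ p_1^{-1}$ restricts to the identity on $B$ viewed inside $\Gamma$, hence is a bijection $\Br(\Gamma_1)\simeq\Br(\Gamma_2)$.

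\emph{Part 1.} For 1(b), I would invoke Graph Lemma \ref{graph-lemma}(b) directly: the image under $p_i$ of a connected component of $\Gamma\setminus B$ is a segment of $\Gamma_i$, and this sets up the bijection; one checks that the type (circular / inner linear / extremal linear) is preserved, since being extremal is detected by containing an end vertex of $\Gamma_i$ and end vertices are exactly degree-$1$ vertices not in $B$, which are matched up by the component correspondence. For 1(c), given an admissible segment $S_1$ of $\Gamma_1$, let $S$ be the component of $\Gamma\setminus B$ mapping onto $S_1$ under $p_1$ (and onto the corresponding $S_2$ under $p_2$). I would argue that no vertex of $S$ is contracted by $p_1$: if some $(-1)$-vertex of $S$ were contracted, by relative minimality it is not contracted by $p_2$, but then its weight in $\Gamma$ is $-1$ while its weight in $\Gamma_2$ (admissible segment, or a loop case) is $\le-2$ — contradiction; this is exactly the argument already used in the proof of Proposition \ref{abs-graph}. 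Hence $p_1|_S$ and $p_2|_S$ are isomorphisms and $\phi'|_{S_1}\colon S_1\xrightarrow{\simeq}S_2$. For 1(d) and 1(e): for a non-admissible segment, $p_1|_S$ and $p_2|_S$ need not be isomorphisms, but the induced birational transformation $S_1\dasharrow S_2$ only involves blowups/blowdowns at vertices and edges strictly inside the segment (its endpoints in $\Gamma$ are attached to $B$, whose degrees are preserved, so those attaching edges are never touched); for a non-extremal segment both endpoints are attached to branching vertices, which forces every blowdown occurring in the reduction to be inner (contracting an end vertex of a segment would lower the degree of a branching vertex, contradicting Graph Lemma \ref{graph-lemma}(a)), giving 1(d); for an extremal segment one endpoint is free, so outer blowups/blowdowns can occur, giving only the weaker 1(e).

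\emph{Parts 2--5.} Part 2: if the $\Gamma_i$ are circular, then $B=\emptyset$, so $\Gamma\setminus B=\Gamma$ is connected and, being dominated onto a cycle, $\Gamma$ is itself a circular graph; since there are no end vertices, every blowdown in $p_i$ contracts an at most linear $(-1)$-vertex lying on the cycle, i.e.\ is inner — so the $p_i$ are inner. Part 3: if all extremal linear segments of (say) $\Gamma_1$ are admissible, then by Graph Lemma \ref{graph-lemma}(c) necessarily $\Br(\Gamma)=B$ (otherwise $\Gamma_1$ would have a non-admissible extremal segment), so every connected component of $\Gamma\setminus B$ that is extremal in $\Gamma_1$ maps isomorphically under $p_1$ (admissible, by 1(c)), hence no end vertex of $\Gamma$ is ever contracted by $p_1$ or $p_2$; it follows that $\phi'$ is composed of inner moves and induces a bijection $\End(\Gamma_1)\simeq\End(\Gamma_2)$. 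Part 4: if $\phi'$ is inner and the $\Gamma_i$ are linear, then inner moves never create or destroy branching vertices or end vertices beyond matching them up, so $\Br(\Gamma)=B=\emptyset$ and $\Gamma$ has exactly two end vertices and no branching vertex — hence $\Gamma$ is linear. Part 5: if moreover each $\Gamma_i$ has exactly two vertices, then $\Gamma$ is a linear graph obtained from a two-vertex linear graph by inner blowups (which insert $(-1)$-vertices between the two tips and lower the tips' weights) and then contracted back; one checks directly that $\phi'$ preserves the (unordered or ordered, via end-vertex bijection from part 3) pair of weights, so $\phi'$ must be an isomorphism. I would spell this last computation out: a single inner blowup on $[[a,b]]$ yields $[[a-1,-1,b-1]]$ and the only contractible sequence back to a two-vertex graph recovers $[[a,b]]$, and by induction on the length of $\Gamma$ the same holds.

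\textbf{Main obstacle.} The genuinely delicate point is part 1(d) versus 1(e): making precise why, inside a \emph{non-extremal} linear segment, the induced birational transformation can always be taken to consist of \emph{inner} blowups and blowdowns only. The subtlety is that $\phi'$ a priori is given via $p_2\circ p_1^{-1}$, i.e.\ a morphism up and then a morphism down through $\Gamma$, and one must argue that the restriction of this to a pair of corresponding segments can be re-expressed without any move that touches the segment's endpoints — equivalently, that no outer blowdown/blowup is forced. The clean way is to use Graph Lemma \ref{graph-lemma}(a) (degrees of branching vertices, hence the edges attaching a segment to $B$, are preserved) to conclude that $p_1^{-1}(S_1)$ and $p_2^{-1}(S_2)$ share the same endpoints-plus-attaching-edges in $\Gamma$, reducing the whole transformation to one taking place within $\Gamma\setminus B$ relative to fixed ends; then relative minimality plus the structure of contractions of linear contractible chains (Lemma \ref{lem:contractibility}, Corollary \ref{cor:contractibility}) forces innerness. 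I expect the bookkeeping here — keeping track of which vertices are "frozen" throughout — to be the part that needs the most care, even though conceptually it is exactly the Graph Lemma doing the work.
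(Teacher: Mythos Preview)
Your overall strategy is the same as the paper's: reduce everything to Graph Lemma~\ref{graph-lemma} and Proposition~\ref{abs-graph}, working segment by segment. Two points deserve correction or comparison.

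\textbf{Part 1(c): a circularity.} You argue that if a $(-1)$-vertex of $S$ were contracted by $p_1$, then it survives in $\Gamma_2$ with weight contradicting admissibility of $S_2$. But you have not yet shown $S_2$ is admissible---that is part of the conclusion. The fix is to run the argument the other way: if a $(-1)$-vertex $v\in S$ is contracted by $p_2$, then by relative minimality $v\in\Vert(\Gamma_1)$, its weight in $\Gamma$ is $-1$, and its weight in $\Gamma_1$ is $\ge -1$; since $p_1(v)\in S_1$ is admissible this forces $-1\ge$ weight $\ge -1 >-2$, contradiction. Hence $p_2|_S$ is an isomorphism. Then any $(-1)$-vertex contracted by $p_1$ would survive as an at most linear $(-1)$-vertex in the minimal graph $\Gamma_2$, again impossible; so $p_1|_S$ is an isomorphism too. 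This is exactly the argument of Proposition~\ref{abs-graph} applied to the restricted diagram, which is how the paper phrases it.

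\textbf{Part 1(d): you miss the clean route via Graph Lemma~\ref{graph-lemma}(c).} Your inline justification (``contracting an end vertex of a segment would lower the degree of a branching vertex'') does not address the real obstruction: the preimage component $C\subset\Gamma\setminus B$ might a priori carry extra branches, whose contraction would be outer. The paper disposes of this in one line: if $C$ contained a vertex of $\Br(\Gamma)\setminus B$, then by Graph Lemma~\ref{graph-lemma}(c) the corresponding segments $S_1,S_2$ would be \emph{extremal}, contrary to hypothesis. Hence $C$ is linear with both ends attached to $B$, and every blowdown in $p_i|_C$ is inner. Your ``main obstacle'' paragraph gestures at an alternative via relative minimality (a branch of $C$ would have to be contracted by both $p_1$ and $p_2$), which also works, but invoking part~(c) of the Graph Lemma is shorter and is what the paper does.

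\textbf{Part 5.} Your inductive computation is correct but heavier than needed. The paper observes: by part~4, $\Gamma$ is linear; since $\phi'$ is inner, $p_i$ sends end vertices of $\Gamma$ to end vertices of $\Gamma_i$; hence if $\Gamma$ had more than two vertices, \emph{all} internal vertices would be contracted by each $p_i$, so any $(-1)$-vertex among them is contracted by both---contradicting relative minimality. Thus $\Gamma$ has two vertices and the $p_i$ are isomorphisms.
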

\begin{proof}
Statement 1(a) follows from Graph 
Lemma~\ref{graph-lemma}($a$). 
By Graph Lemma~\ref{graph-lemma}($b$)
we may restrict $\phi$ to any segment of 
$\Gamma_1$ extended by its incident edges 
and adjacent branch vertices. 
Such a restriction includes all the blowups and 
blowdowns in $\phi$ that happen on this segment, 
its incident edges 
and their successive images. 
It yields again a relatively minimal diagram 
\eqref{triangular diagram} 
with minimal dominated graphs. 
Thus, 1(b) resp. 1(c) follows from Graph 
Lemma~\ref{graph-lemma}($b$) resp. from
Lemma \ref{lem:isoms}
and Proposition~\ref{abs-graph}. 

To prove  1(d) we have to show that a relatively 
minimal birational transformation 
between non-admissible inner
linear segments is inner. 
Assuming the contrary, the connected component of 
$\Gamma\setminus B$ corresponding to these segments 
contains a branch vertex 
$v\in \Br(\Gamma)\setminus B$.  Then by 
Graph Lemma~\ref{graph-lemma}($c_1$) 
these segments are 
extremal,  contrary to our assumption.

See \cite[Lemma 2.7]{FKZ-graphs} for statement 2.  
Statement 3 is immediate from 1(c), 1(d) and statement 2.  
 To show statement 4
 it suffices to observe that an inner blowup cannot create 
 a new branch point. 
 
To show statement 5 suppose 
 that the linear segment 
 $\Gamma$ in diagram \eqref{triangular diagram} contains 
 more than 2 vertices. Since $p_i$ is inner,
 it cannot contract 
 a tip of $\Gamma$. Hence, it contracts all vertices 
 in $\Vert(\Gamma)\setminus\Tip(\Gamma)$. 
Therefore, there is a  $(-1)$-vertex 
$v\in\Vert(\Gamma)\setminus\Tip(\Gamma)$ contracted 
in both $\Gamma_1$ and $\Gamma_2$. 
The latter
 is impossible since our  diagram 
 \eqref{triangular diagram} is relatively minimal. 
 Hence $\Gamma$ contains just $2$ vertices, and so 
 the $p_i$ are isomorphisms of weighted graphs. 
\end{proof}
\begin{remark} Given a relatively minimal diagram 
\eqref{triangular diagram}, where $\Gamma_1$ 
and $\Gamma_2$ are minimal 
non-admissible linear graphs, the morphisms $p_i$ 
do not need 
to be inner, in general, 
see e.g. Example \ref{exa:non-linear domination} and 
\cite[Examples 2.6 and 3.30]{FKZ-graphs}.
\end{remark} 
Proposition~\ref{cor:graph} leads to the following theorem.
\begin{theorem}\label{th:bir=inn}
Let $(X,D)$ be an NC-pair with minimal dual graph 
$\Gamma(D)$ such that all extremal linear segments 
of $\Gamma(D)$ are admissible.
Then $\Bir(X,D)=\Inn(X,D)$.
\end{theorem}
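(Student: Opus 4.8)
The plan is to use the bridge between birational transformations of the NC-pair $(X,D)$ and birational transformations of its dual graph $\Gamma(D)$, together with Proposition~\ref{cor:graph}(3). First I would take an arbitrary element $\Phi\in\Bir(X,D)$; by the Remark following the definition of $\Bir(X,D)$ it corresponds to an automorphism of $Y=X\setminus\supp D$, and by Lemma~\ref{lem:rel-min-pairs} it fits into a commutative diagram \eqref{diagr:bir-pairs} whose associated diagram of dual graphs is a \emph{relatively minimal} diagram \eqref{triangular diagram}. Here both graphs occurring at the bottom of that diagram are $\Gamma(D)$ itself (up to the isomorphism produced by $\Phi$), so we are exactly in the situation $\Gamma_1=\Gamma_2=\Gamma(D)$ with a dominating graph $\Gamma$, and $\phi'=p_2\circ p_1^{-1}$ is a representation of $\Phi$ on the level of graphs.

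Next I would invoke the hypothesis: $\Gamma(D)$ is minimal and all its extremal linear segments are admissible. Proposition~\ref{cor:graph}(3) then tells us that the graph-level birational transformation $\phi'$ is \emph{inner}. The remaining task is to transport this conclusion back to the geometric level, i.e.\ to upgrade "the representation $\phi'$ is inner" to "$\Phi\in\Inn(X,D)$". This is where I would be careful: the passage from $\phi'$ to an inner decomposition of $\Phi$ requires Proposition~\ref{prop:transf-repr}, which lifts a sequence of blowups and blowdowns of $\Gamma(D)$ to a birational transformation of NC-pairs represented by that sequence, and which moreover gives \emph{uniqueness up to isomorphism of NC-pairs} when no outer blowups are involved. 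Since $\phi'$ is inner it contains only inner blowups and inner blowdowns, hence in particular no outer blowups; so the lift $\widetilde\Phi$ produced by Proposition~\ref{prop:transf-repr} is well-defined up to isomorphism of NC-pairs, is composed of inner blowups, inner blowdowns and isomorphisms, and therefore lies in $\Inn(X,D)$ by Definition~\ref{def:inner-1}. Finally, $\widetilde\Phi$ and $\Phi$ have the same representation $\phi'$ and both restrict to (the same) automorphism of $Y$ away from the boundary; since a birational transformation of $X$ is determined by its restriction to $Y=X\setminus\supp D$ (the Remark identifying $\Bir(X,D)\simeq\Aut(Y)$), we get $\Phi=\widetilde\Phi\in\Inn(X,D)$. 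Combined with the inclusion $\Inn(X,D)\subset\Bir(X,D)$ of Lemma~\ref{lem:inclusions}, this gives $\Bir(X,D)=\Inn(X,D)$.

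The main obstacle I anticipate is the bookkeeping in the second-to-last step, namely making precise that "$\Phi$ and its lift $\widetilde\Phi$ coincide." One has to be slightly careful because $\phi'$ is only a \emph{representation} of $\Phi$, i.e.\ it records one decomposition of $\Phi$ into graph blowups/blowdowns; a priori different decompositions of the same $\Phi$ could behave differently, and the inner decomposition of $\phi'$ furnished by Proposition~\ref{cor:graph}(3) is not literally the original decomposition coming from Lemma~\ref{lem:rel-min-pairs}. The clean way around this is to observe that both $\Phi$ and the lift $\widetilde\Phi$ of the inner representation agree on $Y$ — indeed both are built, along $Y$, from the identity $\id_Y$ — and then use the fact (Remark after the definition of $\Bir(X,D)$) that an element of $\Bir(X,D)$ is uniquely determined by its restriction to $Y$. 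A secondary point worth spelling out, to be safe, is why the relatively minimal diagram in Lemma~\ref{lem:rel-min-pairs} really has $\Gamma(D)$ at \emph{both} bottom corners: this is because $(X,D)=(X_1,D_1)=(X_2,D_2)$ for $\Phi\in\Bir(X,D)$, so after passing to dual graphs we land in Proposition~\ref{cor:graph} with $\Gamma_1=\Gamma_2=\Gamma(D)$, and the admissibility hypothesis on the extremal linear segments of $\Gamma(D)$ is exactly the hypothesis of part~(3) of that proposition.
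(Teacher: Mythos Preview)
Your overall strategy matches the paper's: apply Lemma~\ref{lem:rel-min-pairs} to put $\Phi\in\Bir(X,D)$ into a relatively minimal diagram~\eqref{diagr:bir-pairs}, pass to dual graphs, and invoke Proposition~\ref{cor:graph}(3) to conclude that the graph-level transformation $\phi'=p_2\circ p_1^{-1}$ is inner.

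Where you diverge from the paper is in the final transfer back to geometry, and here you make the argument harder than necessary. You introduce a separate lift $\widetilde\Phi$ via Proposition~\ref{prop:transf-repr} and then try to argue $\widetilde\Phi=\Phi$ by claiming they restrict to the same automorphism of $Y$, justified by ``both are built, along $Y$, from the identity $\id_Y$''. This justification is not sound: the morphisms $P_1|_{\widetilde Y}$ and $P_2|_{\widetilde Y}$ are both isomorphisms $\widetilde Y\to Y$, but they are \emph{different} isomorphisms in general (their composite $P_2\circ P_1^{-1}|_Y$ is the nontrivial automorphism $\Phi|_Y$), so ``built from identities'' does not yield equality. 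Your argument can be repaired via the uniqueness clause of Proposition~\ref{prop:transf-repr} (which you do cite): since both $\Phi$ and $\widetilde\Phi$ are represented by the same inner sequence $\phi'$, they differ by an isomorphism of NC-pairs, and that suffices for $\Phi\in\Inn(X,D)$.

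The paper avoids this detour entirely. The point is that $\phi'=p_2\circ p_1^{-1}$ is not merely \emph{some} representation of $\Phi$; it is the representation induced by the \emph{specific} geometric decomposition $\Phi=P_2\circ P_1^{-1}$ already in hand. Saying that $\phi'$ is inner means every blowup and blowdown in this sequence is inner at the graph level, and a geometric blowup (resp.\ blowdown) of an NC-pair is inner precisely when the corresponding graph operation is. Hence each step of $P_2\circ P_1^{-1}$ is already an inner blowup, inner blowdown, or isomorphism, and $\Phi\in\Inn(X,D)$ follows immediately---no separate lift, no appeal to uniqueness, and no comparison of restrictions to $Y$ is needed.
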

\begin{proof}
By Lemma \ref{lem:rel-min-pairs} any $\Phi\in\Bir(X,D)$ 
fits in a relatively minimal commutative diagram 
\eqref{diagr:bir-pairs} of NC-pairs.
Hence $\Phi=P_2\circ P_1^{-1}$, where $P_i^{-1}\colon
 (X,  D)\dasharrow (\widetilde X,\widetilde D)$ for $i=1,2$
  is composed of a sequence of blowups 
of $X$ in points of $D$ and infinitesimally near 
points followed by an isomorphism. Let 
$p_i\colon\Gamma(\widetilde D)\to\Gamma(D)$ 
be the birational morphism of dual graphs 
induced by $P_i$,  $i=1,2$. 
 Then the induced birational transformation  
  of dual graphs $\phi=p_2\circ p_1^{-1}\colon 
  \Gamma(D)\dasharrow\Gamma(D)$ 
  (see Definition \ref{def:dual}) 
 fits in a relatively minimal diagram 
 \eqref{triangular diagram}.
 According to Proposition~\ref{cor:graph}  
 $\phi$ is an inner birational transformation of 
 weighted graphs. 
 Hence $\Phi$ is an inner birational transformation 
 of NC-pairs, i.e. $\Phi\in \Inn(X,D)$. 
\end{proof}
\section{$\Ga$-actions, affine rulings and dual graphs}
\label{sec:actions}
The following lemma is well known. 
\begin{lemma}[{\rm\cite[Corollary 1.2]{FZ-LND}}] 
\label{lem: Ga-actions}
If an affine variety $Y$ of dimension $\ge 2$ admits 
a nontrivial $\Ga$-action, then 
$\Aut (Y)$ contains a connected infinite-dimensional 
commutative unipotent ind-subgroup. 
In particular, it contains algebraic unipotent subgroups 
of arbitrarily large dimensions. 
\end{lemma}
\begin{proof} Indeed, let $H\subset \Aut^{\0} (Y)$ 
be the one-parameter unipotent subgroup
which corresponds to the given $\Ga$-action. 
Then $H$ can be written as $H=\exp(\K\partial)$, 
where $\partial$ is a nonzero locally nilpotent derivation 
of the algebra $\mathcal{O}(Y)$, see \cite{Fr}. 
The  ring of invariants $\ker\partial=\mathcal{O}(Y)^H$ 
is an infinite dimensional vector space, and 
$\exp((\ker\partial)\partial)$ is 
a connected infinite-dimensional 
commutative unipotent subgroup
of $\Aut^{\0} (Y)$. For any finite-dimensional 
subspace $V\subset \ker\partial$, $\exp(V\partial)$ is
an algebraic unipotent subgroup of $\Aut^{\0} (Y)$.
\end{proof}
\begin{remark} 
The assumption in Lemma 
\ref{lem: Ga-actions} that $Y$ is affine is important.  
Indeed, an open surface  $Y=\PP^1\times \AA^1$
admits an effective $\Ga$-action, 
and at the same time
$\Aut (Y)$ is an algebraic group;
see \cite{La} 
for a description of $\Aut (Y)$. 
\end{remark}
Recall that an {\em $\A^1$-fibration}, or an 
{\em affine ruling}, on a normal affine 
surface $Y$ is a morphism $Y\to B$ to a smooth 
curve with a general fiber 
isomorphic to the affine line $\A^1$.
The following lemma is well known, see e.g.
 \cite[Lemma 1.6]{FKZ-deformations}; cf.  
 the proof of  Proposition \ref{affine-fibration}.
\begin{lemma}\label{lemstd}
Let $Y$ be a normal affine surface.
Given an  $\A^{1}$-fibration $\pi:Y\to B$ 
over a smooth affine curve 
$B$ there exists an  SNC-completion $(X_0,D_0)$ 
of $Y$
with the following properties:
 \begin{itemize}
 \item  $\pi$ extends to a $\PP^1$-fibration 
 $\bar\pi:X_0\to \bar B$ over a smooth 
completion $\bar B$ of $B$;
 \item there is a unique component $S$ of $D_0$ 
 with $S^{2}=0$ which is a section of $\bar\pi$;
 \item all other components of $D_0$ are smooth
  rational curves contained in fibers of $\bar\pi$;
 \item the dual graph $\Gamma(D_0)$ is a minimal tree;
 \item the $(0)$-vertices of $\Gamma(D_0)$ different 
 from $S$ are tips;  
 they correspond to 
 the irreducible fibers of $\bar\pi$ contained in
$D_0$.
\end{itemize}
\end{lemma}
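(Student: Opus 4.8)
The plan is to build the completion $(X,D)$ in stages, starting from the $\A^1$-fibration $\pi\colon Y\to B$ and compactifying in the fiber direction first, then in the base direction. First I would compactify each fiber: over a general point $b\in B$ the fiber $\pi^{-1}(b)\cong\A^1$, so by adding one point at infinity on $B$ (passing to the smooth completion $\bar B\supset B$) and taking a relatively minimal model of the resulting surface over $\bar B$, I obtain a smooth projective surface $X_0$ with a $\PP^1$-fibration $\bar\pi_0\colon X_0\to\bar B$ extending $\pi$. The standard structure theory of $\PP^1$-fibrations (minimal ruled surfaces blown up in fibers; see e.g.\ the references on ruled surfaces cited in the paper, or Gizatullin's work) shows that $X_0\setminus Y$ is a divisor all of whose horizontal part is a single section $S$ (because a general fiber meets the boundary in exactly one point, that point being $\A^1$'s point at infinity), and whose vertical part consists of trees of rational curves sitting inside finitely many degenerate fibers, together with the full fibers over $\bar B\setminus B$.

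Next I would normalize the configuration to get an SNC-pair with the listed numerical properties. After further blowups of $X_0$ at points of the boundary (and infinitesimally near points), I may assume the boundary divisor $D_0$ is SNC; this does not change $Y=X_0\setminus\supp D_0$. Now I perform elementary transformations in the fibers — blow up a point of $D_0$ lying on $S$ in a degenerate fiber and blow down the proper transform of the old fiber component — to normalize the self-intersection of the section to $S^2=0$; each such elementary transformation keeps the pair an SNC-pair, keeps $\bar\pi$ a $\PP^1$-fibration, and keeps $S$ a section while redistributing multiplicities among the vertical components. I would then contract, within the boundary, every $(-1)$-curve that is at most linear and \emph{different from $S$} (so the contraction is admissible and keeps us SNC up to a final re-SNC-ing); this makes $\Gamma(D)$ a minimal graph. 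Because $\supp D$ is connected (Lemma~\ref{lem:Hartogs}(a), $Y$ affine) and $D$ is a union of $S$ with trees of rational curves hanging off $S$ and the fibers over $\bar B\setminus B$, $\Gamma(D)$ is a tree; and $S$, being a section of $\bar\pi$ with $S^2=0$, is the unique vertex of weight $0$ that is not an end vertex — any other $(0)$-vertex is a smooth rational curve in a fiber which, by minimality and the fact that a reduced fiber component of self-intersection $0$ must be a whole fiber, is an isolated-in-its-fiber end vertex corresponding to a full fiber of $\bar\pi$.

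The last claim — that the $(0)$-vertices other than $S$ are exactly the end vertices of $\Gamma(D)$ that are full fibers — I would get from the numerical geometry of fibers: a vertical curve $F$ has $F^2\le 0$ with equality iff $F$ is (a multiple of) a full fiber, hence in our reduced SNC situation iff $F$ is a smooth fiber, which then meets $D$ only in its intersection with $S$, so it is an end vertex of $\Gamma(D)$; conversely a $(0)$-vertex $\neq S$ cannot be horizontal (a horizontal component other than $S$ would give a general fiber meeting $\supp D$ in $\ge 2$ points, so $Y$ would not be an $\A^1$-fibration with the claimed general fiber) and among vertical $(0)$-curves only full fibers occur. The main obstacle I anticipate is the bookkeeping in the middle step: arranging \emph{simultaneously} that $S^2=0$, that $\Gamma(D)$ is minimal, and that the pair stays SNC, since contracting boundary $(-1)$-curves to reach minimality can re-create nodal or multiply-intersecting configurations and can also change $S^2$. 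The fix is to iterate — alternate (i) re-SNC-ing by inner blowups, (ii) normalizing $S^2$ to $0$ by fiberwise elementary transformations away from $S$, and (iii) contracting at most linear boundary $(-1)$-curves $\neq S$ — and to argue termination via the Picard rank, which strictly drops under (iii) while (i) and (ii) are controlled, exactly as in the proof of Lemma~\ref{lem:rel-min-pairs}.
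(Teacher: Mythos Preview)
The paper does not prove this lemma; it is cited as well known with a reference to \cite[Lemma~1.6]{FKZ-deformations}, so there is no proof in the paper to compare against. Your overall strategy---compactify $\pi$ to a $\PP^1$-fibration, identify the unique horizontal section $S$, then arrange minimality of $\Gamma(D)$ and $S^2=0$---is the standard one and is correct in outline.

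Two points deserve tightening. First, reverse the order of your main steps: minimize $\Gamma(D)$ \emph{before} normalizing $S^2$. Once $\Gamma(D)$ is minimal, every fiber over $\bar B\setminus B$ is forced to be a single smooth $(0)$-curve (the $S$-adjacent component $C_0$ has multiplicity $1$ since $S\cdot F=1$; if $C_0^2=-1$ then $F\cdot C_0=0$ makes $C_0$ a linear $(-1)$-vertex of $\Gamma(D)$, contradicting minimality; any other at-most-linear $(-1)$-vertex of the fiber tree is already at most linear in $\Gamma(D)$). Elementary transformations at such a full fiber then adjust $S^2$ by $\pm1$ while preserving both SNC and minimality, so one pass suffices and your iterate-and-terminate argument is unnecessary. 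This also fixes a genuine imprecision in your description: you propose to ``blow up a point on $S$ in a degenerate fiber and blow down the proper transform of the old fiber component,'' but in a degenerate fiber the $S$-adjacent component already has self-intersection $\le -1$, so after blowup it has self-intersection $\le -2$ and cannot be contracted---the elementary transformation must be done at a smooth fiber contained in $D$, which exists since $B$ is affine. Second, your worry about ``re-SNC-ing'' is unfounded: $\Gamma(D)$ is a tree from the outset (fibers are trees of rational curves, each attached to $S$ at a single point), so contracting an at-most-linear $(-1)$-vertex never produces multiple edges or loops. Finally, since $Y$ is only normal, you should pass to a resolution $\tilde Y\to Y$ to build the smooth fibered model and contract the exceptional locus (which lies in $Y$, away from $D$) at the end; you do not address this, but it causes no difficulty.
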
 
The following proposition proves Corollary \ref{cor}. 
The equivalence (i)$\Leftrightarrow$(ii) 
is contained in \cite[Lemma 1.6]
{FZ-LND}; see also \cite[Remark 1.7]{FKZ-deformations}. 
For convenience of the reader 
we provide a  proof.
\begin{proposition}\label{affine-fibration} 
Let $Y$ be a normal affine surface 
and $(X,D)$ be a minimal NC-completion of $Y$.  
Then the following conditions are equivalent:
\begin{enumerate}\item[(i)] $Y$
admits an effective $\Ga$-action; \item[(ii)] $Y$ admits 
an $\A^1$-fibration over a smooth affine curve; 
\item[(iii)] the dual graph $\Gamma(D)$ has a non-admissible 
extremal linear segment. 
\end{enumerate}
\end{proposition}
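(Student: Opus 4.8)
The plan is to prove the equivalence of (i), (ii), (iii) by establishing the cycle (i) $\Rightarrow$ (ii) $\Rightarrow$ (iii) $\Rightarrow$ (i), using the dictionary between $\Ga$-actions, affine rulings, and dual graphs. The implication (i) $\Rightarrow$ (ii) is classical: an effective $\Ga$-action on a normal affine surface $Y$ is equivalent to a nonzero locally nilpotent derivation $\partial$ on $\K[Y]$, and the inclusion $\ker\partial\hookrightarrow\K[Y]$ of the ring of invariants—which for a surface is the coordinate ring of a smooth affine curve $B$—defines a morphism $\pi\colon Y\to B$ whose general fibers are orbit closures, hence isomorphic to $\A^1$. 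I would cite \cite{FZ-LND} or \cite{FKZ-deformations} here rather than reprove it.

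For (ii) $\Rightarrow$ (iii), I would invoke Lemma \ref{lemstd}: an $\A^1$-fibration $\pi\colon Y\to B$ over a smooth affine curve produces an SNC-completion $(X',D')$ with a minimal tree dual graph $\Gamma(D')$ containing a section $S$ with $S^2=0$ and, crucially, at least one $(0)$-vertex among the end vertices of $\Gamma(D')$ different from $S$ (corresponding to a full fiber of $\bar\pi$; such a fiber exists because $B$ is affine, so $\bar\pi$ has at least one degenerate fiber over $\bar B\setminus B$, but more to the point the closure of a general fiber $\A^1$ meets $D'$ in one point, while some fiber component must lie entirely in $D'$). This $(0)$-vertex is an end vertex of a minimal linear segment, which by the criterion ``a minimal linear graph with a $(0)$-tip is non-admissible'' (stated in the introduction via \cite[Examples 2.11]{FKZ-graphs}) makes the extremal linear segment through it non-admissible in $\Gamma(D')$. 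Then I would pass from $\Gamma(D')$ to the given minimal graph $\Gamma(D)$: both are minimal and birationally equivalent, so by Graph Lemma \ref{graph-lemma}(b) and Proposition \ref{cor:graph}(1b) the bijection of segments sends non-admissible extremal linear segments to non-admissible ones, hence $\Gamma(D)$ also has a non-admissible extremal linear segment.

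For (iii) $\Rightarrow$ (i), suppose $\Gamma(D)$ has a non-admissible extremal linear segment $L$. By the remark in the introduction (\cite[Examples 2.11]{FKZ-graphs}), a minimal non-admissible linear graph is birationally equivalent to a linear graph with a $(0)$-tip; applying this inside $\Gamma(D)$—using Proposition \ref{cor:graph}(1e) to realize the transformation as a birational transformation of the whole graph fixing the branching structure—I obtain a minimal dual graph $\Gamma'$ birationally equivalent to $\Gamma(D)$ with an extremal linear segment having a tip of weight zero. By Proposition \ref{prop:transf-repr} this corresponds to a new minimal NC-completion $(X',D')$ of the \emph{same} surface $Y$ whose dual graph has a $(0)$-end-vertex $E$. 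The component $E$ of $D'$ is then a smooth rational curve with self-intersection zone meeting the rest of $D'$ transversally in one point; the pencil $|E|$ on $X'$ has general member a smooth rational curve meeting $D'$ in one point, so restricting to $Y$ gives an $\A^1$-fibration $Y\to B$ over an affine curve, and a general fiber being $\A^1$ with a free $\Ga$-action along it yields an effective $\Ga$-action on $Y$ (one extends the fiberwise translations; this is the standard construction, cf. \cite{FKZ-deformations}).

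The main obstacle I expect is the bookkeeping in (iii) $\Rightarrow$ (i): one must be careful that the birational transformation of graphs producing a $(0)$-tip really lifts to a birational transformation of NC-pairs preserving $Y$ (for inner parts this is Proposition \ref{def:repr}/\ref{prop:transf-repr}, but extremal-segment transformations may involve outer blowups, where the lift is only ``up to deformation'' per Remark \ref{rem:up-and-down}), and that the resulting completion can be taken minimal. A clean way around this is to note that we only need \emph{existence} of \emph{some} NC-completion of $Y$ with a $(0)$-end-vertex, not a controlled one, and then the affine ruling and $\Ga$-action follow from the local structure at $E$; the minimality of $(X,D)$ in the hypothesis is used only to run (ii) $\Rightarrow$ (iii) the other way via the segment bijection. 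I would also double-check, via Lemma \ref{lem:Hartogs}, that affiness of $Y$ forces $\Gamma(D)$ to be a tree with a section-like vertex so that the extremal linear segment picture is the right one.
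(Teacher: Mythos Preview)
Your proposal is essentially the same approach as the paper's proof: the paper proves (iii)$\Leftrightarrow$(ii) and (i)$\Leftrightarrow$(ii) with the same ingredients (the $(0)$-tip via \cite[Examples 2.11]{FKZ-graphs} and Proposition~\ref{prop:transf-repr}, the pencil from \cite[Proposition~V.4.3]{BHPV}, the LND argument, and the segment bijection of Proposition~\ref{cor:graph}), while you organize these into the cycle (i)$\Rightarrow$(ii)$\Rightarrow$(iii)$\Rightarrow$(i).

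Two places need tightening. In (ii)$\Rightarrow$(iii), Lemma~\ref{lemstd} does \emph{not} assert the existence of a $(0)$-end-vertex different from $S$; it only says that any such $(0)$-vertex is an end vertex. You must supply the existence: since $B$ is affine, $\bar B\setminus B\neq\emptyset$, and for $P\in\bar B\setminus B$ the entire fiber $\bar\pi^{-1}(P)$ lies in $D'$; minimality of $\Gamma(D')$ forces this fiber to be irreducible (a reducible fiber of a $\PP^1$-fibration has a $(-1)$-leaf not meeting the section, hence an at most linear $(-1)$-vertex), so it is a $(0)$-end-vertex. The paper makes this explicit by contracting the fiber over $P$. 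Your parenthetical about ``degenerate fibers'' and ``closure of a general fiber'' misses this point. Second, your closing remark that Lemma~\ref{lem:Hartogs} forces $\Gamma(D)$ to be a tree is wrong: that lemma only gives connectedness and non-negative-definiteness. The tree property is a \emph{consequence} of (iii), not an input (see Remark~\ref{rem:tree}); drop that check.
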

\begin{proof} Assuming (iii), $\Gamma(D)$ 
 is birationally equivalent to a
graph $\Gamma_0$
with a rational ($0$)-vertex $v$ of degree 1, see e.g. 
\cite[Examples 2.11]{FKZ-graphs}. 
By Proposition~\ref{prop:transf-repr}
we can replace the original NC-completion $(X,D)$ 
with a new one $(X_0,D_0)$ 
such that $\Gamma(D_0)=\Gamma_0$. 
The vertex $v$ corresponds
 to a smooth rational component, say, 
$C_0$ of $D_0$ with ${C_0}^2=0$. 
Resolving singularities of $X_0$ 
we obtain a smooth projective surface $\tilde X_0$ 
with the exceptional divisor $E$ and 
a smooth rational curve $C_0'$ 
disjoint with $E$ (namely, the proper transform of $C_0$ 
on $\tilde X_0$) such that ${C_0'}^2=0$. 
Indeed, the singular points of $X_0$ being points of 
$Y=X_0\setminus\Supp D_0$ 
the resolution does not affect the divisor $D_0$. 
Such a curve $C_0'$ on $\tilde X_0$ is a reduced member 
of a linear pencil of rational curves
with no base point, 
see e.g. \cite[Proposition~V.4.3]{BHPV}. 
This pencil defines a $\PP^1$-fibration 
$\tilde\pi\colon \tilde X_0\to \bar B$ 
over a smooth projective curve
$\bar B$ such that $C_0'$ is a reduced fiber of  
$\tilde\pi$. Since $E\cdot C_0'=0$, 
each connected component of $E$ is properly 
contained in a fiber of $\tilde\pi$. 
The contraction of $E$  yields a $\PP^1$-fibration 
$\pi_0\colon X_0\to\bar B$ with a reduced fiber $C_0$
that lies in the smooth locus of $X_0$. 

The surface $Y=X_0\setminus\supp D_0$ being affine, 
$\supp D_0$ is connected and supports an effective 
ample divisor, 
see Lemma \ref{lem:Hartogs}(a). 
Furthermore, $Y$ 
contains no complete curve, 
in particular, no entire fiber of $\pi_0$. 
It follows that $\supp D_0\neq C_0$.
Since $\deg(v)=1$, $C_0$ meets transversally
 just one component
$S\neq C_0$ of $D_0$. Since $C_0\cdot S=1$, $S$ is 
a section of $\pi_0$. Hence 
$\pi_0|_Y\colon Y\to B$ 
is an $\A^1$-fibration over 
a smooth affine curve  
$B\subset \bar B\setminus\{\pi_0(C_0)\}$. 
This proves the implication (iii)$\Rightarrow$(ii). 

Assume now that (ii) holds. By Lemma \ref{lemstd} 
the $\A^1$-fibration $\pi\colon Y\to B$ over 
a smooth affine curve  $B$ admits an extension 
$\bar\pi\colon X_0\to\bar B$ 
to an SNC-completion $(X_0,D_0)$ of $Y$, 
where $\bar B$ is a smooth completion of $B$. 
The places at infinity of general $\A^1$-fibers of $\pi$ 
lie on a section, 
say, $S$ of $\bar \pi$. 

A contraction of at most linear 
$(-1)$-vertex of $\Gamma(D_0)$ 
corresponds to a contraction of an exceptional 
$(-1)$-component of $D_0$ preserving the NC-condition. 
Every branch $W$ of $\Gamma(D_0)$ 
at the vertex $[S]$ corresponds to 
the intersection $D_0(P):=D_0\cap \bar\pi^{-1}(P)$
 for a point $P\in\bar B$. 
Contracting subsequently at most linear 
$(-1)$-vertices of $W$ we transform the divisor 
$D_0(P)$ into a minimal divisor, say, $C_P$.
If $P\in \bar B\setminus B$, then $D_0(P)$
coincides with the full fiber 
$ \bar\pi^{-1}(P)$ and the corresponding 
divisor $C_P$ is a reduced $\PP^1$-curve
 that corresponds to  
a $(0)$-vertex of the dual graph. We have 
$C_P\cdot S=1$.

After all such birational transformations 
in the fibers of $\bar\pi$
that carry components of $D_0$,
 it might happen that $[S]$ becomes
an at most linear $(-1)$-vertex of 
the resulting dual graph. 
In this case we perform 
an elementary transformation at 
a point of a full fiber 
$C_P$ with $P\in \bar B\setminus B$, 
see Remark \ref{rem:up-and-down}. 
This changes the weight of $[S]$ by one. 
In this way we can arrive finally
at a minimal NC-completion $(X_0',D_0')$ 
of $Y$ along with 
a $\PP^1$-fibration $\bar\pi'\colon X_0'\to \bar B$  
that restricts to $\pi$ on $Y$.
The birationally equivalent minimal dual graphs 
$\Gamma_1=\Gamma(D)$ and 
$\Gamma_2=\Gamma(D_0')$
fit in a relatively minimal diagram 
\eqref{triangular diagram}, 
see Proposition \ref{prop:domination}.  

For $P\in \bar B\setminus B$ the $\PP^1$-component 
$C_P$ of $D_0'$
 corresponds to 
a ($0$)-vertex $v$ of the dual graph 
$\Gamma(D_0')$ that 
is a tip of a non-admissible extremal linear segment $L'$
of $\Gamma(D_0')$. By Proposition \ref{cor:graph}.1(b), 
$L'$ corresponds to 
a non-admissible extremal linear 
segment $L$ of $\Gamma(D)$.
This proves the equivalence  (iii) $\Leftrightarrow$ (ii).

Suppose that (i) holds, that is,  $Y$ admits an effective 
$\Ga$-action. Let $\delta$ be the nonzero locally nilpotent 
derivation on 
$\cO_Y(Y)$ generating the given $\Ga$-action. 
The subalgebra  $\ker\delta\subset \cO_Y(Y)$ of $\Ga$-invariants is 
finitely generated, 
see \cite[Lemma 1.1]{Fie}, and $B=\Spec(\ker\delta)$
is a smooth affine curve.
The embedding $\ker\delta\hookrightarrow \cO_Y(Y)$ 
yields an $\A^1$-fibration
$Y\to B$ along the orbits of the $\Ga$-action.
Thus, (ii) holds.

 Conversely, suppose that (ii)  holds, and let $\pi\colon Y\to B$  
 be an $\A^1$-fibration 
 over a smooth affine curve $B$. Consider
 as before an SNC-completion $(X_0,D_0)$ of $Y$ and 
 an extension $\bar\pi\colon X_0\to\bar{B}$ to 
 a $\PP^1$-fibration on $X_0$ with a section $S\subset D_0$. 
 Shrinking 
the base $B$ suitably we obtain a principal Zariski open cylinder 
$U\cong Z\times\A^1$ on $Y$ with an affine base $Z\subset B$, 
where $U=\bar\pi^{-1}(Z)\setminus S$. 
By \cite[Proposition 3.1.5]{KPZ} there is an effective 
$\Ga$-action on $Y$. 
This shows the equivalence  (i) $\Leftrightarrow$ (ii).  
\end{proof}
\begin{remark}\label{rem:tree} Under the assumptions 
of Proposition \ref{affine-fibration}  suppose that $\Gamma(D)$
has a non-admissible extremal linear segment. 
We claim that in this case 
$\Gamma(D)$ is a tree with at most one non-rational 
vertex. 
Indeed, let $(X_0,D_0)$ and $S$ be 
as in the proof of the proposition. 
Then $\Gamma(D_0)$ is a tree with a root $[S]$
whose branches $C_P$ at $[S]$ are trees with only rational vertices.
Since $\Gamma(D)$ and $\Gamma(D_0)$ 
are birationally equivalent, the same holds for $\Gamma(D)$ as well. 
\end{remark}
\section{Main results}\label{sec:main}
In this section we prove our main Theorem \ref{intro-fin-dim}.
In addition, we show that $\Bir(X,D)=\Inn(X,D)$ if 
and only if the surface $Y=X\setminus \supp D$ admits 
no $\Ga$-action. 
In the latter case, the identity component 
$\Aut^\0(X)$
is an algebraic torus.
The next theorem yields 
Theorem \ref{intro-fin-dim} from the Introduction.
\begin{theorem}\label{thm:Ga-inn}  
 Let $(X,D)$ be a minimal NC-completion of a normal 
 affine surface $Y$. Then the following are equivalent:
 \begin{enumerate}[(a)]
 \item all extremal linear segments of the dual graph 
 $\Gamma(D)$ are admissible;
 \item $Y$ admits no effective $\Ga$-action;
 \item $\Bir(X,D)=\Inn(X,D)$;
 \item $\Aut^{\0}(Y)$ is an algebraic torus 
 of rank $\le 2$.
 \end{enumerate} 
\end{theorem}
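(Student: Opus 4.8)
The plan is to prove the cycle of equivalences $(a)\Leftrightarrow(b)\Leftrightarrow(c)$ together with the implication $(b)\Rightarrow(d)$, the converse $(d)\Rightarrow(b)$ being trivial since an algebraic torus contains no nontrivial unipotent element. The equivalences among $(a)$, $(b)$ and $(c)$ are essentially bookkeeping on top of results already in hand: $(a)\Leftrightarrow(b)$ is immediate from Proposition~\ref{affine-fibration}, since condition~(iii) there is precisely the negation of $(a)$, condition~(i) is the negation of $(b)$, and $(X,D)$ is assumed to be a minimal NC-completion. For $(a)\Rightarrow(c)$ I would simply invoke Theorem~\ref{th:bir=inn}. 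For $(c)\Rightarrow(b)$ (equivalently $(b)\Rightarrow$ not-$(c)$ contrapositively, but it is cleaner to argue directly), I would assume $Y$ admits an effective $\Ga$-action; by Proposition~\ref{affine-fibration} the graph $\Gamma(D)$ then has a non-admissible extremal linear segment, so by Proposition~\ref{abs-graph} $\Gamma(D)$ is not birationally rigid, and by the construction in the proof of Proposition~\ref{abs-graph} (or directly via the birational equivalence to a graph with a $(0)$-tip) there is a non-inner birational transformation; lifting it through Proposition~\ref{prop:transf-repr} gives an element of $\Bir(X,D)\setminus\Inn(X,D)$. A small point to check here is that a non-inner birational \emph{transformation of graphs} really yields a non-inner birational transformation of the \emph{pair}: this follows because, by Proposition~\ref{def:repr} / the correspondence in Section~\ref{sec:NC-pairs}, inner transformations of pairs induce inner transformations of the dual graph, so the contrapositive gives what we need.

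\textbf{The implication $(b)\Rightarrow(d)$.} This is the substantive part. Assume $Y$ admits no effective $\Ga$-action. By the already-established $(b)\Rightarrow(c)$ we have $\Bir(X,D)=\Inn(X,D)$, hence $\Aut^{\0}(Y)=\Bir^{\0}(X,D)=\Inn^{\0}(X,D)$. By Proposition~\ref{prop:ind-sbgrp}, $\Inn^{\0}(X,D)=\Aut^{\0}(X,D)$, which by Lemma~\ref{lem:connected-natural} is a connected affine algebraic group. Thus $\Aut^{\0}(Y)$ is a connected affine algebraic group. Now, a connected affine algebraic group over $\K$ whose unipotent radical is trivial and which contains no $\Ga$-subgroup at all must be reductive, and in fact a torus: if it were reductive but not a torus it would contain a copy of $\SL_2$ or $\PGL_2$, hence a one-dimensional unipotent subgroup, contradicting rigidity of $Y$; and the unipotent radical, if nontrivial, would likewise furnish a $\Ga$-subgroup. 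Hence $\Aut^{\0}(Y)$ is an algebraic torus. Finally, a torus acting faithfully on an affine surface has rank $\le 2=\dim Y$, since a faithful torus action on an irreducible variety $Y$ has rank at most $\dim Y$ (the generic orbit has dimension equal to the rank). This gives~(d).

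\textbf{Expected main obstacle.} The combinatorial equivalences are routine given the machinery of Sections~\ref{sec:NC-pairs}--\ref{sec:graph-lemma}, and the reductive-group argument in $(b)\Rightarrow(d)$ is standard. The one place requiring genuine care is the step $(c)\Rightarrow(b)$, i.e.\ producing from a non-admissible extremal linear segment an \emph{honest} non-inner element of $\Aut(Y)$ rather than merely a non-inner birational transformation of graphs: one must be sure that the lift furnished by Proposition~\ref{prop:transf-repr} is biregular on $Y$ (which it is, by construction the lift restricts to an isomorphism off the boundary) and genuinely lies outside $\Inn(X,D)$. For the latter, the cleanest route is the contrapositive of the graph-to-pair correspondence for inner transformations noted above, so that non-innerness of the graph transformation forces non-innerness of the pair transformation. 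Once this is pinned down, the theorem follows by assembling the pieces; as a byproduct one records that $\Aut^{\0}(X,D)=\Aut^{\0}(Y)$ is independent of the chosen minimal NC-completion of a rigid $Y$.
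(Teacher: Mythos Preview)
Your treatment of $(a)\Leftrightarrow(b)$, $(a)\Rightarrow(c)$, $(d)\Rightarrow(b)$ and $(b)\Rightarrow(d)$ is fine and matches the paper. The gap is in your argument for $(c)\Rightarrow(b)$ (equivalently, not-$(b)\Rightarrow$ not-$(c)$).

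The lift furnished by Proposition~\ref{prop:transf-repr} is a birational map $\Phi\colon (X,D)\dashrightarrow (X',D')$ that, by construction, restricts to an \emph{isomorphism} $Y\to Y'$ obtained by doing nothing on $Y$ and only modifying the boundary. Viewed as an element of $\Aut(Y)\cong\Bir(X,D)$, this is the identity, which is certainly inner. In other words, manipulating the dual graph alone never produces a nontrivial automorphism of $Y$, so your construction cannot yield an element of $\Bir(X,D)\setminus\Inn(X,D)$. Your proposed fix via ``the contrapositive of the graph-to-pair correspondence'' does not help either: a single $\Phi\in\Bir(X,D)$ admits many decompositions into blowups, blowdowns and isomorphisms, and hence many representations on the level of graphs; the existence of a non-inner representation of $\Phi$ does not prevent $\Phi$ from lying in $\Inn(X,D)$.

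The paper proves $(c)\Rightarrow(b)$ by a completely different (and short) argument that you already have the ingredients for. Assume $(c)$ and suppose for contradiction that $Y$ admits an effective $\Ga$-action. By Lemma~\ref{lem: Ga-actions}, $\Aut(Y)=\Bir(X,D)$ contains connected commutative unipotent algebraic subgroups of arbitrarily large dimension. But $\Bir(X,D)=\Inn(X,D)$ by $(c)$, and Proposition~\ref{prop:ind-sbgrp} forces every connected algebraic subgroup of $\Inn(X,D)$ to sit inside the finite-dimensional group $\Aut^{\0}(X,D)$; this is a contradiction. Note that this is the same Proposition~\ref{prop:ind-sbgrp} you already invoke for $(b)\Rightarrow(d)$, so no new machinery is needed---you just need to use it here as well, together with Lemma~\ref{lem: Ga-actions}, rather than trying to manufacture a non-inner element by hand.
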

\begin{proof}
Properties (a) and (b) are equivalent by 
Proposition~\ref{affine-fibration}. 
The implication (a)$\implies$(c) follows from 
Theorem~\ref{th:bir=inn}.

To deduce the implication (c) $\implies$ (b)  
assume that (c) holds. If 
on the contrary $Y$ admits 
an effective $\Ga$-action, then 
$\Bir(X,D)=\Aut(Y)$ contains connected 
algebraic subgroups 
of arbitrary dimension, see 
Lemma~\ref{lem: Ga-actions}. 
This contradicts the fact that
every connected algebraic subgroup $G$ 
of $\Bir(X,D)=\Inn(X,D)$ 
is contained in the algebraic group 
$\Aut^{\0}(X,D)$, see 
Proposition \ref{prop:ind-sbgrp}. 
Thus, the conditions 
(a), (b) and (c) are equivalent.

The implication $(d)\implies(b)$ is immediate. 
Suppose now that one of the equivalent 
conditions (a)--(c) is fulfilled. 
By (c),  $\Aut(Y)=\Bir(X,D)=\Inn(X,D)$. 
By Proposition \ref{prop:ind-sbgrp} 
the closed connected ind-subgroup
$\Aut^{\0}(Y)=\Inn^{\0}(X,D)$ of $\Bir(X,D)$
coincides with the connected affine 
algebraic subgroup 
$\Aut^{\0}(X,D)$. Due to (b), the latter group
contains no $\Ga$-subgroup, 
hence no unipotent element. Therefore, $\Aut^{\0}(Y)$
is an algebraic torus. 
 This proves the converse implication $(b)\implies(d)$. 
\end{proof}
Combining Theorem \ref{thm:Ga-inn}, 
Proposition \ref{affine-fibration} 
and its proof we arrive at the following conclusion. 
It implies Corollary 
\ref{cor} from the Introduction;  
cf. \cite[Proposition 3.34]{FKZ-graphs}. 
\begin{corollary}\label{cor:bir=inn} 
For  a normal affine surface
$Y$ the following are equivalent.
\begin{itemize}
\item[(i)] $Y$ admits an effective $\Ga$-action; 
\item[(ii)] $Y$ admits an $\AA^1$-fibration 
over a smooth affine curve; 
\item[(iii)] for every minimal NC-completion 
$(X,D)$ of $Y$ 
the dual graph $\Gamma(D)$ has 
a non-admissible extremal segment;
\item[(iv)] there exists a minimal NC-completion 
$(X_0,D_0)$ of $Y$ 
such that $\Gamma(D_0)$ has a $(0)$-vertex 
of degree $1$;
\item[(v)] $\Bir(X,D)\neq\Inn(X,D)$ for every minimal 
NC-completion $(X,D)$ of $Y$;
\item[(vi)] there exists a minimal NC-completion $(X_0,D_0)$ 
of $Y$ such that $\Bir(X_0,D_0)\neq\Inn(X_0,D_0)$.
\end{itemize}
\end{corollary}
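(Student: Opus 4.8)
The strategy is to cycle through the six conditions, drawing on the equivalences already established. The backbone is Proposition \ref{affine-fibration}, which gives (i) $\Leftrightarrow$ (ii), and also shows that for a \emph{minimal} NC-completion $(X,D)$ the condition (i) is equivalent to $\Gamma(D)$ having a non-admissible extremal linear segment; since $\Bir(X,D)\simeq\Aut(Y)$ does not depend on the completion, this equivalence holds for \emph{every} minimal NC-completion, which gives (i) $\Leftrightarrow$ (iii). One inclusion of (iii) $\Leftrightarrow$ (v) is Theorem \ref{thm:Ga-inn}: conditions (a) and (c) there say precisely that $\Gamma(D)$ has only admissible extremal linear segments, respectively that $\Bir(X,D)=\Inn(X,D)$, for a given minimal NC-completion; negating and again using completion-independence of $\Bir(X,D)\simeq\Aut(Y)$ yields (iii) $\Leftrightarrow$ (v). Finally (v) $\Rightarrow$ (vi) and (iii) $\Rightarrow$ (iv) are trivial, so it remains to close the loop through the existential statements.

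Concretely I would argue: (i) $\Leftrightarrow$ (ii) by Proposition \ref{affine-fibration}; (ii) $\Rightarrow$ (iv) by following the proof of that proposition, which starting from an $\A^1$-fibration $\pi\colon Y\to B$ constructs an NC-completion whose dual graph has a $(0)$-vertex of degree $1$, and then passing to a minimal model (contracting at most linear $(-1)$-vertices never destroys a $(0)$-end-vertex attached to a section $S$ of positive self-intersection, cf. Remark \ref{rem:tree}); (iv) $\Rightarrow$ (iii) because a $(0)$-vertex of degree $1$ lies on a non-admissible extremal linear segment; (iii) $\Rightarrow$ (i) by the "for every minimal completion" form of Proposition \ref{affine-fibration} noted above. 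This settles the equivalence of (i)--(iv). Then (i) $\Leftrightarrow$ (v): by Theorem \ref{thm:Ga-inn} applied to any minimal NC-completion, $Y$ admits no effective $\Ga$-action iff $\Bir(X,D)=\Inn(X,D)$, hence $Y$ \emph{does} admit one iff $\Bir(X,D)\neq\Inn(X,D)$; since this holds for every minimal NC-completion, (i) $\Leftrightarrow$ (v). Lastly (v) $\Rightarrow$ (vi) is obvious, and (vi) $\Rightarrow$ (i) is again Theorem \ref{thm:Ga-inn}, now in the contrapositive and only for the particular minimal NC-completion furnished by (vi): if that $(X,D)$ has $\Bir(X,D)\neq\Inn(X,D)$, then condition (c) of Theorem \ref{thm:Ga-inn} fails, so condition (b) fails, i.e. $Y$ admits an effective $\Ga$-action.

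The only step requiring genuine care is the passage between existential statements about \emph{some} minimal completion and universal statements about \emph{every} minimal completion — for instance (vi) $\Rightarrow$ (i) $\Rightarrow$ (v). This is where one must be careful to invoke the completion-independence of $\Aut(Y)\simeq\Bir(X,D)$ (noted in the Remark after Definition \ref{def:repr}... more precisely in the remark following the definition of $\Bir(X,D)$) together with the fact that Proposition \ref{affine-fibration} and Theorem \ref{thm:Ga-inn} phrase their hypotheses for an arbitrary minimal NC-completion. Once it is observed that each of (iii) and (v) is equivalent to the completion-\emph{in}dependent property (i), the implications between their "for some" and "for every" forms are automatic. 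I expect no serious obstacle beyond bookkeeping; every ingredient is already in place, so the proof is essentially a diagram chase among the established equivalences. The remaining implication (iv) $\Rightarrow$ (iii) and the construction (ii) $\Rightarrow$ (iv) just re-use the explicit geometry in the proof of Proposition \ref{affine-fibration}, so they can be dispatched with a reference rather than a recomputation.
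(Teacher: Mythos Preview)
Your plan is correct and follows exactly the route the paper intends: the corollary is stated there as an immediate consequence of Theorem~\ref{thm:Ga-inn}, Proposition~\ref{affine-fibration} and its proof, and your diagram chase through (i)--(vi) simply makes this explicit. One small slip: in your parenthetical justification of (ii)$\Rightarrow$(iv) the section $S$ from Lemma~\ref{lemstd} has $S^2=0$, not positive self-intersection; the argument still works, since a $(0)$-end-vertex $v$ adjacent only to $S$ is never touched when contracting at-most-linear $(-1)$-vertices (neither $v$ nor $S$ ever acquires weight $-1$), so $v$ survives in the minimal model.
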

\begin{example}\label{exa:plane-curve}  
Consider the affine surface 
$Y=\PP^2\setminus \supp D$, where $D$ is a nonzero 
reduced effective divisor on $\PP^2$ with only nodes 
as singularities. 
Let us show, 
as a simple application 
of  Corollary \ref{cor:bir=inn}, that $Y$ 
is rigid if and only if $\deg(D)\ge 3$. 

First of all, consider the case $\deg(D)\le 2$. 
In this case $\supp D$ 
is either a projective line, or a pair of lines, 
or a smooth conic.
 In all three cases there exists 
 a pencil  of  conics $\mathcal{L}$ 
 on $\PP^2$ which includes $D$ 
 (resp. $2D$ in the former case) 
 and has a unique base point. 
 The restriction $\mathcal{L}|_Y$ 
 yields an $\A^1$-fibration over $\A^1$. 
 Thus,  $Y$ admits an effective 
 $\Ga$-action, and so is not rigid, 
 see Corollary \ref{cor:bir=inn}(i)--(ii). 

Suppose further that $\deg(D)\ge 3$. 
Using the Bezout theorem and 
analyzing separately the cases where 
$D$ has 1, 2 and at least 3 
components, one can easily conclude 
that every component $C$ of $D$ 
corresponds either to a branch vertex 
of $\Gamma(D)$, or to 
a vertex sitting on a cycle. Anyway, 
$\Tip(\Gamma(D))=\emptyset$.
Therefore, 
$\Gamma(D)$ has no 
extremal linear segment. 
By Corollary \ref{cor:bir=inn} 
this implies the rigidity. 

As an alternative proof one can use the fact 
that $\bar k(Y)=-\infty$ provided $Y$ 
admits an effective $\Ga$-action, 
where $\bar k$ stands for the logarithmic 
Kodaira dimension. 
Since $D$ is nodal one has 
$K_{\PP^2}+D=(\deg(D)-3)H$, where 
$H\in\Pic(\PP^2)$ is the class of a line. 
Therefore, $\bar k(Y)=-\infty$ 
if and only if $\deg(D)\le 2$.

A more direct approach is as follows 
\footnote{We thank 
Shulim Kaliman, who
suggested this approach.}. 
Suppose  $Y$ admits an 
effective $\Ga$-action. Consider an 
SNC-completion$(X',D')$ as in Lemma \ref{lemstd}.
Since $X$ is rational, the dual graph 
$\Gamma(D')$ is a tree with 
only rational vertices, see Remark \ref{rem:tree}. 
The latter properties of 
$\Gamma(D')$ are preserved under 
birational transformations between NC-pairs. 
Hence $\Gamma(D)$ is also a tree with 
only rational vertices. 
By the Bezout theorem this implies 
as before that $\deg(D)\le 2$. 
\end{example}
\section{Appendix: Minimal models 
of weighted graphs}\label{app}
All weighted graphs 
considered in this section 
are supposed to have 
only rational vertices. 
Among these graphs, we classify 
those that have a unique up 
to isomorphism minimal model, 
and show that any other weighted graph 
with only rational vertices
has an infinite number of 
non-isomorphic minimal models, 
see Theorem \ref{prop:classification}. 
 Recall that $[[w_1,\ldots,w_n]]$ (resp. $((w_1,\ldots,w_n))$)
 represents the linear (resp. circular) graph with 
the ordered (resp. cyclically ordered) 
sequence  of weights $(w_1,\ldots,w_n)$. 
We abbreviate by $a_k$ the sequence 
$(\underbrace{a,\ldots,a}_{k})$.

The following corollary of Propositions 
\ref{abs-graph} and 
\ref{cor:graph}.1(a)--(c) is immediate. 
\begin{corollary}\label{cor:min-model} Let $\Gamma$  
be a birationally rigid minimal weighted graph. 
Then up to isomorphism $\Gamma$ is 
a unique minimal graph 
in its class of birational equivalence.
\end{corollary}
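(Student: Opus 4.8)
The plan is to derive Corollary~\ref{cor:min-model} directly from the stated results on birationally rigid graphs. Suppose $\Gamma$ is a birationally rigid minimal weighted graph and let $\Gamma'$ be any minimal weighted graph in the birational equivalence class of $\Gamma$, so that there is a birational transformation $\Gamma\dasharrow\Gamma'$. First I would invoke Proposition~\ref{prop:domination} to fit this birational transformation into a triangular diagram \eqref{triangular diagram} with a dominating graph $\Gamma''$ and birational morphisms $p_1\colon\Gamma''\to\Gamma$ and $p_2\colon\Gamma''\to\Gamma'$. Then I would apply Lemma~\ref{lem:rel-min} to replace this diagram, up to equivalence of birational morphisms, by a \emph{relatively minimal} diagram \eqref{triangular diagram} with the same $\Gamma_1=\Gamma$ and $\Gamma_2=\Gamma'$; the replacement does not change the induced birational transformation $p_2\circ p_1^{-1}\colon\Gamma\dasharrow\Gamma'$ up to isomorphism of the target.

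Next I would use the hypothesis that $\Gamma$ is birationally rigid. By Definition~\ref{def:bir-rigid}, in this relatively minimal diagram no vertex of $\Vert(\Gamma)\subset\Vert(\Gamma'')$ is contracted in $\Gamma'$, and consequently $p_2\circ p_1^{-1}\colon\Gamma\dasharrow\Gamma'$ is an isomorphism of weighted graphs. This already gives uniqueness up to isomorphism of the minimal model. I would also note that, alternatively, one can phrase this via Proposition~\ref{abs-graph}: birational rigidity of $\Gamma$ is equivalent to admissibility of $\Gamma$, and then Proposition~\ref{cor:graph}.1(a)--(c) shows that the induced birational map restricts to an isomorphism on every segment and a bijection on branching vertices, hence is a global isomorphism; but the cleaner route is simply the definition of birational rigidity applied to the relatively minimal diagram produced by Lemma~\ref{lem:rel-min}.

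There is essentially no obstacle here: the corollary is a formal consequence of Proposition~\ref{prop:domination}, Lemma~\ref{lem:rel-min}, and Definition~\ref{def:bir-rigid}, and this is exactly why the excerpt calls it ``immediate''. The only point requiring a word of care is making sure that passing from an arbitrary triangular diagram to a relatively minimal one (via Lemma~\ref{lem:rel-min}) does not alter the birational transformation $\Gamma\dasharrow\Gamma'$ under consideration, which is guaranteed since Lemma~\ref{lem:rel-min} only replaces the $p_i$ by equivalent birational morphisms $p_i'\circ\psi$, and $p_2'\circ\psi\circ(p_1'\circ\psi)^{-1}$ differs from $p_2\circ p_1^{-1}$ only by the pre-composition built into the equivalence, hence agrees with the original $\Gamma\dasharrow\Gamma'$ up to an isomorphism of $\Gamma'$. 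Thus the proof is a two-line deduction, which is presumably all that is intended.

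\begin{proof}[Proof of Corollary~\ref{cor:min-model}]
Let $\Gamma'$ be a minimal weighted graph birationally equivalent to $\Gamma$ and let $\phi\colon\Gamma\dasharrow\Gamma'$ be a birational transformation. By Proposition~\ref{prop:domination}, $\phi$ fits in a diagram \eqref{triangular diagram} with some dominating graph and birational morphisms $p_1\colon\Gamma''\to\Gamma$, $p_2\colon\Gamma''\to\Gamma'$. Applying Lemma~\ref{lem:rel-min} we may assume, after replacing the $p_i$ by equivalent birational morphisms, that this diagram is relatively minimal; this does not change $\phi=p_2\circ p_1^{-1}$ up to an isomorphism of $\Gamma'$. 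Since $\Gamma$ is birationally rigid, no vertex of $\Vert(\Gamma)\subset\Vert(\Gamma'')$ is contracted in $\Gamma'$, so by Definition~\ref{def:bir-rigid} the map $p_2\circ p_1^{-1}\colon\Gamma\to\Gamma'$ is an isomorphism. Hence $\Gamma'\simeq\Gamma$, and $\Gamma$ is, up to isomorphism, the unique minimal graph in its birational equivalence class.
\end{proof}
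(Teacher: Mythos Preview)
Your proof is correct. It differs slightly from the route the paper signals: the paper derives the corollary from Proposition~\ref{abs-graph} together with Proposition~\ref{cor:graph}.1(a)--(c), i.e., birational rigidity $\Rightarrow$ admissibility $\Rightarrow$ the relatively minimal birational map restricts to an isomorphism on every segment and a bijection on branching vertices, hence is globally an isomorphism. You instead bypass admissibility entirely and appeal directly to Definition~\ref{def:bir-rigid}, which already records that in any relatively minimal diagram $p_2\circ p_1^{-1}$ is an isomorphism; the only work left is to manufacture such a diagram via Proposition~\ref{prop:domination} and Lemma~\ref{lem:rel-min}, which you do. Your route is the shorter of the two and is self-contained from the definition; the paper's route has the mild advantage of making explicit \emph{why} the map is an isomorphism (segment by segment), which foreshadows the finer analysis used later in Section~\ref{app}. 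Either way the corollary is, as you say, a two-line deduction.
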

We provide below examples 
of non-birationally rigid minimal 
 weighted graphs with a unique 
minimal model, see Proposition \ref{prop:counter-ex}.  
We use the following definitions. 
\begin{definition}[\textit{Triangulation of a circular graph}]
\label{def:triangulation}
Let $\C$ be a simplicial $2$-complex 
homeomorphic to a disc, 
with all vertices lying on the boundary of the disc. 
In this case the incidence graph 
of triangles in $\C$ is a tree. 
One  associates with $\C$ 
a weighted circular graph that consists of
the $1$-complex $B=\partial\C$
by attributing the weight $-k+1$ 
to a vertex of degree $k$ of $\C$. 
We call $\C$ a \textit{triangulation} 
of the weighted circular graph $B$, 
and we say that $B$ is 
\textit{triangulable}. 
\end{definition}
\begin{definition}
We say that an inner segment $S$
 of a weighted graph $\Gamma$ 
is a \textit{charm earring} if $S$ 
consists of a single rational 
$(0)$-vertex of degree 2
adjacent to a branch vertex via two edges. 
We say that a minimal graph 
$\Gamma$ is \textit{admissible 
modulo charm earrings} 
if every non-admissible segment of 
$\Gamma$ is a charm earring, 
see Definition \ref{def:admissible}.
\end{definition}
\begin{proposition} \label{prop:counter-ex} 
Let $\Gamma_1$ be a graph that
is either admissible modulo
charm earring,
or coincides with one of the graphs
\[[[0]], \quad ((3)),\quad ((4)),\quad ((0,0))
\quad\text{and}\quad ((0,m))\quad\text{with}
\quad m\le -2.\] 
Then up to isomorphism $\Gamma_1$ 
is a unique minimal graph in its 
birational equivalence class.
\end{proposition}
\begin{proof} Assume that $\Gamma_2$ 
is a minimal graph 
birationally equivalent to $\Gamma_1$ 
and not isomorphic to 
$\Gamma_1$. 
By Lemma \ref{lem:rel-min} there exists 
a relatively minimal diagram 
\eqref{triangular diagram} 
with $\Gamma$ that dominates
$\Gamma_1$ and $\Gamma_2$ 
via  birational morphisms $p_1$ resp. $p_2$. 
We may suppose that $p_1$ is composed of $n\ge1$ 
blowdowns and $p_2$ 
also is composed of blowdowns. 
Let $p_1^{-1}\colon \Gamma_1
\dasharrow\Gamma$ 
add vertices $v_1,\ldots,v_n$ in this order, where $n\ge 1$. 
By the relative minimality assumption  
$p_2$ cannot contract any $(-1)$-vertex of $\Gamma$ 
among $v_1,\ldots,v_n$. In particular, 
it does not contract 
the $(-1)$-vertex $v_n$ of $\Gamma$. 

{\bf  Case 1: $\Gamma_1=[[0]]$.} 
Let $\Vert(\Gamma_1)=\{v_0\}$. 
We prove by induction that for every $k=1,\ldots, n$ 
the following hold.
\begin{enumerate}[(i)] 
\item[(i$_k$)] \textit{The first $k$ blowups in $p_1^{-1}$ 
are outer 
and yield the graph $[[-1,-2_{k-1},-1]]$, 
with vertices $v_0,\ldots,v_{k}$ 
appearing in this order;}
\item[(ii$_k$)] \textit{the further blowups in 
$p_1^{-1}$ do not 
change the weights of $v_0,\ldots,v_{k-1}$;}
\item[(iii$_k)$] \textit{the first $k$ blowdowns in $p_2$ 
contract the vertices $v_0,\ldots,v_{k-1}$ of 
$\Gamma$ in this order.}
\end{enumerate}
For $k=1$, (i$_1$)--(iii$_1$) hold because 
$p_2$ contracts no $(-1)$-vertex of 
$\Gamma$ different from $v_0$  
and $\Gamma\ominus\{v_0\}$ is not minimal,
 while $\Gamma_2$ is.
Suppose by induction that for some 
$k\in \{1,\ldots,n-1\}$, (i$_{k}$)--(iii$_{k}$) hold. 
By (ii$_{k}$), the $(k+1)$st blowup 
in $p_1^{-1}$ must be 
the outer blowup at $v_{k}$, 
hence (i$_{k+1}$) holds.
Let $\Gamma_{2,k}$ be
the graph obtained from $\Gamma$ under 
the first $k$ blowdowns in $p_2$.
By (ii$_{k}$) and (iii$_{k}$), 
we have a natural isomorphism of weighted graphs
$\Gamma_{2,k}\ominus\{v_{k}\}\cong
\Gamma\ominus\{v_0,\ldots,v_{k}\}$. The
latter graph contains
at most linear $(-1)$-vertex $v_n$. 
Hence it
is not minimal 
and no of its $(-1)$-vertices 
is contracted by $p_2$. 
Thus, $v_{k}$ is the only vertex of 
$\Gamma_{2,k}$ that can be 
contracted on the next blowdown 
 in $p_2$. 
Its weight is $-1$ in $\Gamma_{2,k}$ and 
$-2$ in $\Gamma$. 
Now (ii$_{k+1}$) and (iii$_{k+1}$) follow.
By recursion (i$_n$) holds, i.e. 
$\Gamma=[[-1,-2_{n-1},-1]]$. 
So $\Gamma_2= [[0]]$ due to 
the minimality of $\Gamma_2$.

{\bf  Case 2:} $\Gamma_1=((m,0))$ with $m\le-2$.
Since $\Gamma_1$ is circular, $\Gamma$ 
is also circular,
and $p_1,p_2$ consist of inner blowdowns, see 
Proposition \ref{cor:graph}(2). 
Let $\Vert(\Gamma_1)=\{u,v_0\}$, 
where $v_0$ is the $(0)$-vertex. 
The same recursion  as in Case 1 shows that for 
$k=1,\ldots,n$ we have:
\begin{enumerate}[(i)]
\item[(i$_k$)] \textit{ the first $k$ blowups in $p_1^{-1}$ 
yield the graph $((m-k,-1,-2_{k-1},-1))$ 
with vertices $u,v_0,\ldots,v_{k}$  
appearing in this order;}
\item[(ii$_k$)] \textit{ the further blowups in $p_1^{-1}$ 
do not change the weights of $v_0,\ldots,v_{k-1}$;}
\item[(iii$_k$)] \textit{ the first $k$ blowdowns in $p_2$ 
contract vertices $v_0,\ldots,v_{k-1}$.}
\end{enumerate}
The only additional observation that we need 
is the following: 
the weight of $u$ in $\Gamma_{2,k-1}$ is 
at most $m-k+(k-1)\le-3$, 
so $u$ cannot be contracted on the next step. 
Thus, we conclude as before that 
$\Gamma_2\cong((m,0))$.

{\bf  Case 3: $\Gamma_1=((3))$.} 
After the first blowup we get 
$\Gamma_1^\prime=((-1,-1))$  
with vertices $v_0$ and $v_1$, where $v_1$ 
is contracted in $\Gamma_1$. 
The second blowup would change the weight of 
$v_0$ to $-2$. The latter is not possible, 
since the vertex
$v_0$ must be 
contracted the first  under $p_2$.
So $\Gamma_2\cong((3))$.

{\bf  Case 4: $\Gamma_1=((4))$.} 
After the first blowup we get 
$\Gamma_1^\prime=((-1,0))$. 
Letting now $m=-1$, the argument from 
Case 2 can be  applied mutatis mutandis to 
$\Gamma_1^\prime$ instead of $\Gamma_1$.

{\bf  Case 5:} $\Gamma_1$ is admissible 
modulo charm earrings. 
By Proposition~\ref{cor:graph}, 
$p_2\circ p_1^{-1}\colon\Gamma_1
\dasharrow\Gamma_2$ 
induces a bijection between $\Br(\Gamma_1)$ 
and $\Br(\Gamma_2)$, an isomorphism between 
the corresponding admissible segments of 
$\Gamma_1$ and $\Gamma_2$ 
and a birational transformation between every 
segment $[[0]]$ coming from 
a charm earring $S$ of $\Gamma_1$ 
and the corresponding linear
segment of $\Gamma_2$. 
Let $v_0$ be the $(0)$-vertex of $S$ adjacent to 
a branch vertex $u\in\Br(\Gamma_1)$. 
Since $u$ cannot be contracted by $p_2$, 
the argument used in Case 1 can be applied to
the linear segment $[[0]]$ with vertex $v_0$.
This shows that $p_2\circ p_1^{-1}$ sends 
every charm earring of $\Gamma_1$ 
isomorphically onto a charm earring of $\Gamma_2$ 
and induces an isomorphism of weighted graphs
$\Gamma_1\cong\Gamma_2$.

{\bf Case 6:} $\Gamma_1=((0,0))$.
According to 
Proposition \ref{cor:graph}.2
the graphs  that appear
on successive steps of the birational transformation 
$\phi'=p_2\circ p_1^{-1}$ are circular. 
We claim that such a circular graph $B$
is triangulable, 
except for the initial graph $\Gamma_1=((0,0))$, 
see Definition \ref{def:triangulation}.
Indeed, the first blowup in $p_1^{-1}$ yields 
the circular graph $((-1,-1,-1))$ which has a triangulation 
with a single triangle.
Suppose that a circular graph $B$ is equipped 
with a triangulation $\C$. 
Then the inner blowup of $B$ 
at the edge $[v_i,v_{i+1}]$ 
that introduces
a new vertex $u$ results in gluing 
to $\C$ a new triangle
with vertices
$v_i,v_{i+1}$ and $u$. 
This adds a new 
leaf to the incidence tree of triangles. 
Conversely, a blowdown of a 
$(-1)$-vertex $v$ of $B$ 
results in removing the unique triangle in $\C$ 
with vertex $v$, provided $B$ 
is different from $((-1,-1,-1))$.

So $\Gamma_2$ is either $((0,0))$ or triangulable. 
In the latter case $\Gamma_2$ is not minimal. 
Indeed, the incidence tree of triangles associated 
with the triangulation $\C$ of $\Gamma_2$ 
contains a leaf that corresponds 
to a triangle with a vertex of weight $-1$.
\end{proof}
\begin{remark} An alternative approach is based 
on the following observation. Consider 
again a relatively minimal diagram 
\eqref{triangular diagram} 
with $\Gamma$ that dominates minimal graphs
$\Gamma_1$ and $\Gamma_2$, 
where $\Gamma_1$ as in 
Proposition \ref{prop:counter-ex} is not 
admissible modulo charm earrings 
and $\Gamma\neq ((-1,-1,-1))$. 
We claim that $\Gamma$ contains exactly two 
$(-1)$-vertices. 
Indeed, for the inertia indices of $\Gamma_1$
we have 
$i_0(\Gamma_1)+i_+(\Gamma_1)=1$. Let $v$ be 
a $(-1)$-vertex of $\Gamma$ that belongs to 
$\Vert(\Gamma_1)$. 
Then $v$ is the unique $(-1)$-vertex of $\Gamma$ 
contracted under $p_2$. 
Assume on the contrary that $\Gamma$ contains 
two distinct $(-1)$-vertices $u_1$ and $u_2$ 
different from $v$. 
They are not adjacent, see  Corollary 
\ref{cor:contractibility}(b), 
are not contained among the vertices of $\Gamma_1$ 
and are not contracted under $p_2$. 
Since $\Gamma_2$ is minimal, the weights 
of $u_1$ and $u_2$ 
in $\Gamma_2$ are non-negative. 
One can show that $u_1$ and $u_2$ in $\Gamma_2$ 
cannot be adjacent. Then the intersection form 
$I(\Gamma_2)$ 
is semipositive definite on the vector subspace spanned 
by the mutually orthogonal vectors $u_1$ and $u_2$. 
However, this contradicts the fact
that $i_0(\Gamma_2)+i_+(\Gamma_2)=
i_0(\Gamma_1)+i_+(\Gamma_1)=1$, 
see Corollary \ref{cor:inertia-indices}.
\end{remark}
\begin{remark} 
A linear segment is called \textit{standard}
 if it is one of the following:
\[ [[0_{2k}, w_1, \ldots, w_n]]\quad\text{and}
\quad [[0_{2k+1}]] \]
where $k, n \ge 0$ and $w_i \le -2$ $\forall i$. 
Similarly, a circular segment is called \textit{standard} if  
it is one of the following:
\[((0_{2k}, w_1, \ldots, w_n)),\quad ((0_{l}, w))
 \quad\text{and}\quad  ((0_{2k}, -1, -1))\]
where $k,l\ge 0$, $n>0$, $w\le 0$ and 
$w_i \le -2$ $\forall i$, see \cite[Definition 2.13]{FKZ-graphs}. 
Notice that for $w=0$ the second circular graph 
above becomes $((0_{l+1}))$. 
Any minimal segment is birationally equivalent to 
a standard one, see \cite[Theorem 2.15(b)]{FKZ-graphs}. 
Moreover, for a minimal linear segment $L$ 
its birational equivalence class contains at most 
two standard graphs related by a \textit{reversion}
\[ [[0_{2k},w_1,\ldots,w_n]] \mapsto
 [[0_{2k},w_n,\ldots,w_1]].\] 
For a minimal circular segment $C$,
 the standard graph is unique in 
 the birational equivalence class 
up to a cyclic permutation of its nonzero weights 
and reversion, see 
\cite[Corrigendum, Corollary 3.33]{FKZ-graphs}. 

For instance, the circular graphs with sequences 
$((-1,-1))$ and $((0,-1))$, 
respectively, are the unique standard graphs 
in the respective birational equivalence classes 
of $((3))$ and $((4))$, while $[[0]]$ is  
the unique standard graph in its birational 
equivalence class.
\end{remark}
\begin{lemma}\label{lem:min-mod} Let $\Gamma$ 
be a   (not necessarily minimal)  non-contractible 
connected
weighted graph.
Assume that  $\Gamma$
has a $(0)$-vertex $v$ of degree $1$ 
or $2$ with no incident loop and multiple edges.
Then $\Gamma$ admits an infinite number 
of non-isomorphic minimal models.
\end{lemma}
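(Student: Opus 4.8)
The plan is to produce, for every integer $N$, a minimal weighted graph that is birationally equivalent to $\Gamma$ and contains a vertex of weight $\le -N$ — equivalently, a minimal model of $\Gamma$ (a minimal weighted graph birationally equivalent to $\Gamma$) whose minimum weight is $\le -N$. Since the minimum of the weights of a finite weighted graph is a well-defined integer, such graphs fall into infinitely many isomorphism classes, which is exactly the desired conclusion. Throughout I would use that contractibility is a birational invariant (Lemma~\ref{lem:contractibility}(c)): as $\Gamma$ is non-contractible, so is every weighted graph birationally equivalent to it, and hence each such graph admits a non-empty connected minimal model obtained by successively contracting at most linear $(-1)$-vertices.

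The heart of the argument is an elementary transformation ``along'' the $(0)$-vertex $v$. First suppose $\deg_\Gamma(v)=1$ with unique neighbour $u$, joined to $v$ by a single edge, and let $c$ be the weight of $u$. I would perform the inner blowup of the edge $[u,v]$, producing a chain $u(c-1)-z(-1)-v(-1)$, and then blow down the end $(-1)$-vertex $v$; this reinstates a $(0)$-end-vertex at $u$ but lowers the weight of $u$ by $1$, while nothing else in $\Gamma$ is touched. Iterating $n$ times gives a connected graph $\Gamma_n$, birationally equivalent to $\Gamma$, obtained from $\Gamma$ just by replacing the weight $c$ of $u$ with $c-n$. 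If instead $\deg_\Gamma(v)=2$ with no incident loop and no multiple edge, then $v$ has two distinct neighbours $u_1(c_1)$ and $u_2(c_2)$, and the analogous move (inner blowup of $[u_1,v]$ followed by the blowdown of the linear $(-1)$-vertex coming from $v$) replaces the local weights $(c_1,0,c_2)$ by $(c_1-1,0,c_2+1)$ and leaves the rest of $\Gamma$ intact; after $n$ steps one gets $\Gamma_n\sim\Gamma$ in which the weight of $u_1$ has become $c_1-n$. In both cases $\Gamma_n$ has exactly $|\Vert(\Gamma)|$ vertices.

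Finally, set $M=|\Vert(\Gamma)|$ and let $\Gamma_n'$ be a minimal model of $\Gamma_n$; by definition it is reached from $\Gamma_n$ by a contraction, i.e.\ by fewer than $M$ successive blowdowns of $(-1)$-vertices, each of which is at most linear. Such a blowdown never decreases a weight, and it increases the weight of any fixed vertex by at most $4$ (by $1$, or by $4$ in case of a double incidence with the contracted vertex). Hence along the whole contraction the weight of the distinguished vertex $w$ (namely $w=u$, resp.\ $w=u_1$) can only grow, from $c-n$ (resp.\ $c_1-n$) to at most $c-n+4M$. So for $n$ large this weight stays strictly below $-1$ throughout the contraction; in particular $w$ is never an at most linear $(-1)$-vertex, so it is not contracted and survives in $\Gamma_n'$ with weight $\le c-n+4M\le -N$. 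Letting $N$ grow yields minimal graphs in the birational class of $\Gamma$ with unbounded-below minimum weight, hence infinitely many non-isomorphic minimal models.

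I expect the main obstacle to be precisely this last control of minimality. Since $\Gamma$ need not be minimal, one cannot simply assert that $\Gamma_n$ is minimal, and the vertex $u$ (resp.\ $u_1$) whose weight is driven to $-\infty$ might a priori be contracted when passing to a minimal model; the fix is the monotonicity-plus-counting observation above, together with the key point that the elementary transformation preserves the number of vertices, so that the number of contractions to a minimal model is bounded independently of $n$. One should also dispose of the degenerate situations (a loop at $v$, a double edge at $v$, or an intermediate value $c-n=-1$): the hypotheses on $v$ exclude the first two, and the third is harmless because only infinitely many values of $n$ are needed.
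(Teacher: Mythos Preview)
Your proof is correct and follows essentially the same strategy as the paper's: use elementary transformations at the $(0)$-vertex $v$ to produce birationally equivalent graphs $\Gamma_n$ with the same number of vertices but with one weight tending to infinity, then pass to minimal models and bound the change in that weight under the at most $M-1$ blowdowns. The only difference is cosmetic: the paper performs the elementary transformation in the opposite direction, driving the neighbour's weight to $+\infty$ (so that vertex, having nonnegative weight, can never become a $(-1)$-vertex and survives automatically, making the maximal weight of $\Gamma_n'$ unbounded above), whereas you drive it to $-\infty$ and then use your $4M$ estimate both to bound the final weight and to guarantee survival; both variants work.
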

\begin{proof}
By our assumption $\Gamma$ contains an edge 
$[v,u]$ isomorphic to $[[0,c]]$ for some $c\in\ZZ$.
Applying iteratively elementary transformations at $v$,
\begin{equation}\label{eq:outer-elem-transf} [[0,c]]
\leadsto [[-1,-1,c]]\leadsto [[0,c+1]],\end{equation}
which is inner if $\deg(v)=2$ and outer otherwise, 
see Remark \ref{rem:up-and-down}, 
we obtain an infinite sequence
of graphs $\{\Gamma_n\}$ 
from the birational equivalence class of $\Gamma$
 with the same number of vertices, where $\Gamma_n$ 
 has a vertex of weight $n$.
Since $\Gamma_n$
 is not contractible, it dominates a minimal graph 
$\Gamma_n^\prime$. We claim that among the 
$\Gamma_n^\prime$ 
there exists an infinite number of non-isomorphic 
minimal graphs.
 Indeed, the contraction $\Gamma_n\to\Gamma_n^\prime$ 
 consists of at most $N={\rm card}(\Vert(\Gamma))$ 
 blowdowns.
Hence it drops the maximal weight of vertices in $\Gamma$
 at most by $4N$. 
Therefore, the range of maximal weights of the graphs
$\Gamma_n^\prime$ is unbounded, which proves our claim.
\end{proof}
\begin{theorem}\label{prop:classification}
Let $\Gamma$ be a minimal connected weighted graph. 
Then the birational equivalence class of $\Gamma$ 
contains  an infinite number of non-isomorphic 
minimal models if and only if 
 $\Gamma$ is not admissible modulo charm earrings 
 and is different from the graphs 
 $[[0]], ((3)), ((4))$, $((0,m))$ with $m\le 0$. 
 In the opposite case,  up to isomorphism $\Gamma$ is 
 a unique minimal graph in its birational equivalence class.
\end{theorem}

\begin{proof} The second assertion follows 
immediately from 
Proposition \ref{prop:counter-ex}. 
To show the first assertion,
consider first the case where $\Gamma$ 
is a non-admissible circular segment 
 with $N$ vertices.  
If $N\ge 3$, then there is a  
vertex $v$ of weight $a\ge 0$
that has degree $2$ and
two distinct neighbors. 
Performing $a$ inner blowups near $v$ we drop 
the weight of $v$ to $0$, and then 
Lemma~\ref{lem:min-mod} gives the result.

Let now $N=2$. If $\Gamma$ has a vertex $v$
of weight $a>0$, then after an inner blowup 
near $v$ we return to the previous case $N\ge 3$. 
Otherwise, $\Gamma=((0,m))$ with $m\le 0$, 
which is excluded by our assumption.
Finally, if $N=1$ and $\Gamma=((m))$ where $m\ge5$ 
by our assumption, then an inner blowup yields 
the 2-cycle $((m-4,-1))$ 
with a positive weight, that returns us to a previous case.

Finally, let $\Gamma$ be non-circular. 
Since by assumption $\Gamma$ is not admissible 
modulo charm earrings, 
$\Gamma\ominus\Br(\Gamma)$ 
contains a non-admissible linear segment with 
a vertex $v$ of weight $a\ge 0$, where $a>0$ if 
either $\Gamma=[[a]]$ or $v$ has two incident 
edges $[v,u]$ with the same branch vertex 
$u\in\Br(\Gamma)$.  
Performing $a$ blowups near $v$ 
we reduce the setup to the one of 
Lemma~\ref{lem:min-mod}, 
which implies the assertion. 
\end{proof}
%


\end{document}